\newtheorem{theorem}{Theorem}[section]
\newtheorem{lemma}[theorem]{Lemma}
\newtheorem{prop}[theorem]{Proposition}
\newtheorem{corollary}[theorem]{Corollary}
\theoremstyle{definition}
\newtheorem{conj}[theorem]{Conjecture}
\newtheorem{defn}[theorem]{Definition}
\newtheorem{example}[theorem]{Example}
\newtheorem{remark}[theorem]{Remark}
\newtheorem{question}[theorem]{Question}
\numberwithin{equation}{section}
\def\ggg{\mathfrak{g}}
\def\zzz{\mathfrak{z}}
\def\gl{\mathfrak{gl}}
\def\ggg{\mathfrak{g}}
\def\mmm{\mathfrak{m}}
\def\frakp{\mathfrak{p}}
\def\hhh{\mathfrak{h}}
\def\bbb{\mathfrak{b}}
\def\nnn{\mathfrak{n}}
\def\calh{\mathcal{H}}
\def\calv{\mathcal{V}}
\def\cz{\mathcal {Z}}
\def\bi{\mathbf{i}}
\def\bk{\mathbf{k}}
\def\tsb{\textsf{B}}
\def\bbz{\mathbb{Z}}
\def\bbn{\mathbb{N}}
\def\bbr{{\mathbb{R}}}
\def\bbs{{\mathbb{S}}}
\def\bk{\mathbf{k}}
\def\bfe{\mathbf{e}}
\def\bff{\mathbf{f}}
\def\bo{{\bar 1}}
\def\bz{{\bar 0}}
\def\ev{{\text{ev}}}
\def\Lie{\mathsf{Lie}}
\def\ad{\mathsf{ad}}
\def\GL{\text{GL}}
\def\Hom{\text{Hom}}
\def\id{\mathsf{id}}
\def\calg{\mathcal{G}}
\def\scrm{\mathscr{M}}
\def\GL{\text{\rm GL}}
\begin{document}

\title[Kac-Weisfeiler Conjecture]
{Irreducible modules of modular Lie superalgebras and super version of the first Kac-Weisfeiler conjecture}

\author{Bin Shu}

\address{School of Mathematical Sciences, Ministry of Education Key Laboratory of Mathematics and Engineering Applications \& Shanghai Key Laboratory of PMMP,  East China Normal University, NO. 500 Dongchuan Rd., Shanghai 200241, China} \email{bshu@math.ecnu.edu.cn}


\subjclass[2010]{17B50; 17B20, 17B30}

\keywords{Modular Lie superalgebras, maximal dimensions of irreducible  modules,  the first Kac-Weisfeiler conjecture}



\thanks{This work is partially supported by the National Natural Science Foundation of China (Grant Nos. 12071136, 11771279 and 12271345), supported in part by Science and Technology Commission of Shanghai Municipality (No. 22DZ2229014).}

\begin{abstract}     {Suppose $\ggg=\ggg_\bz+\ggg_\bo$ is a finite-dimensional restricted Lie superalgebra over an algebraically closed field $\bk$ of characteristic $p>2$.
In this article, we propose a conjecture for maximal dimensions of irreducible modules over the universal enveloping algebra $U(\ggg)$ of $\ggg$, as a super generalization of the celebrated first Kac-Weisfeiler conjecture.
It is demonstrated that the conjecture holds for all basic classical Lie superalgebras and all completely solvable restricted Lie superalgebras. In this process, we investigate irreducible representations of solvable Lie superalgebras.}
	\end{abstract}
\maketitle

\maketitle

\section{Lie superalgebras in characteristic $p$}
Since the works  \cite{BKu, BKl, B, SW, CSW, LS} {\sl{etc}}. on irreducible representations of algebraic supergroups in odd characteristic,   especially  Wang-Zhao's work \cite{WZ1} focusing on irreducible representations of basic classical Lie superalgebras, the study of irreducible representations of finite-dimensional restricted Lie superalgebras in odd characteristic has found  big progress. For instance, see \cite{Zhang, ZhengShu, WZ2, Zhao, Y3, ZengS1, ZengS2, PSh} {\sl etc.}  for determination of irreducible modules of classical Lie superalgebras; see \cite{SZhang1, SZhang2, SY, YS1, YS2,  Y2, Y4, YuL,   LWY, WYL} {\sl etc.} for determination of irreducible modules of Cartan-type Lie superalgebras; and  see \cite{SZhang1,  WZ2, Zhao, Y4, ZengS1, ZengS2, PSh} {\sl etc.} for dimensions or character formulas of irreducible modules.   Nevertheless, their irreducible modules are not well-understood. The purpose of this article is to propose a formulation of maximal dimensions of their irreducible modules. In particular, we thoroughly investigate irreducible representations of finite-dimensional solvable Lie superalgebras.

Throughout the paper, the notions of vector spaces (resp. modules, subalgebras) mean vector superspaces (resp. super-modules, super-subalgebras). For simplicity,
we will often omit the adjunct word ``super."  All vector spaces are defined over $\bk$ which is an algebraically closed field of characteristic $p>2$. For superspace $V=V_\bz+V_\bo$, we will mention the super-dimension of $V$ which means $\underline{\dim} V=(\dim V_\bz|\dim V_\bo)$, in the meanwhile we mention the dimension of $V$ which means $\dim_\bk V:=\dim V_\bz+\dim V_\bo$. As  usual, we denote by $V^*$ the linear dual space of $V$.
Throughout the paper, all Lie (super)algebras are finite-dimensional unless other statements.

\subsection{Restricted Lie superalgebras}\label{restricted}
A Lie superalgebra $\ggg=\ggg_\bz\oplus\ggg_\bo$ is called a restricted one if $\ggg_\bz$ is a restricted Lie algebra and $\ggg_\bo$ is a restricted module of $\ggg_\bz$,     {more precisely, there exists a $p$-mapping $[p]: \ggg_\bz\rightarrow \ggg_\bz$
 satisfying
 \begin{itemize}
 \item[(a)] $(kx)^{[p]} = k^px^{[p]}$ for all $k\in\bk$ and $x\in\ggg_\bz$,
\item[(b)] $[x^{[p]}, y] = (\ad x)^p(y)$ for all $x\in\ggg_\bz$ and $y\in\ggg$,
\item[(c)] $(x + y)^{[p]} = x^{[p]} + y^{[p]} +
\sum_{i=1}^{p-1}
 s_i(x, y)$ for all $x, y\in\ggg_\bz$ where
$(\ad (x\otimes t + y\otimes 1))^{p-1}(x\otimes 1)=\sum_{i=1}^{p-1}is_i(x,y)\otimes t^{i-1}\in \ggg_0\otimes_\bk \bk[t]$. Here $\bk[t]$ denotes the polynomial ring over $\bk$ with indeterminant $t$.
\end{itemize}
 With emphasis on the $p$-mapping $[p]$, we sometimes denote the restricted Lie algebra $\ggg_\bz$ by $(\ggg_\bz,[p])$.
One can refer to \cite[\S{V}.7]{Jac} or \cite[Chapter 2]{SF} for more details on restricted Lie algebras and restricted modules.}

Denote by $\cz(\ggg)$ the center of $U(\ggg)$, i.e. $\cz(\ggg):=\{u\in U(\ggg)\mid \ad x(u)=0\;\; \forall x\in \ggg\}$.
For a restricted Lie superalgebra $\ggg$, the $p$-center $\cz_0$ of $U(\ggg_\bz)$ which is defined to be the subalgebra generated by $\{x^p-x^{[p]}\mid x\in \ggg_{\bar 0}\}$, lies in $\cz$.
  Fix a basis $\{x_1,\cdots,x_s\}$ of $\ggg_\bz$ and a basis $\{y_1,\cdots,y_t\}$ of $\ggg_\bo$.
Set $\xi_{i}=x_{i}^{p}-x_{i}^{[p]}, i=1,\ldots, s$. The $p$-center
$\cz_0$ is a polynomial ring
$\bk[\xi_{1},\ldots,\xi_{s}]$ generated by $\xi_{1},\ldots, \xi_{s}$     {(see for example, \cite[\S2.3]{WZ1}).}

By the PBW theorem, one easily knows that the enveloping
superalgebra $U(\ggg)$ is a free module over $\cz_0$ with basis
\[x_{1}^{a_{1}}\cdots x_{s}^{a_{s}}y_{1}^{b_{1}}\cdots y_{t}^{b_{t}},
 0\leq a_i\leq p-1, \; b_{j}\in\{0,1\}\mbox{ for }i=1,\cdots,s, j=1,\cdots,t \]
    {(see for example, \cite[\S2.3]{WZ1}).}

\subsection{Reduced enveloping algebras of restricted Lie superalgebras}\label{reduced}
Suppose $V$ is an irreducible $U(\ggg)$-module.
By the above argument,  for any $x\in \ggg_\bz$, $x^p-x^{[p]}$ lies in the center of $U(\ggg)$.     {By definition, $x^p-x^{[p]}$ acts on $V$ as an even linear transformation for $x\in \ggg_\bz$.}
Schur's Lemma entails that  each $x^p-x^{[p]}$ for $x\in \ggg_\bz$ acts on $V$ by  scalar $\chi(x)^p$ for some $\chi\in {\ggg_\bz}^*$. Such  $\chi$ is called the $p$-character of  $V$.     {Suppose $\chi\in {\ggg_\bz}^*$ is given, which is naturally regarded in $\ggg^*$ by trivial extension.} Denote by $I_\chi$ the ideal of $U(\ggg)$ generated by the even central elements $x^p-x^{[p]}-\chi(x)^p$ with $x$ running over $\ggg_\bz$.  More generally, we can say that  a $U(\ggg)$-module $M$ is  a $\chi$-reduced module for any given $\chi\in \ggg_\bz^*$ if  for any $x\in \ggg_\bz$, $x^p-x^{[p]}$ acts by the scalar $\chi(x)^p$. All $\chi$-reduced modules for any given $\chi\in\ggg_\bz^*$ constitute a full subcategory of the $U(\ggg)$-module category.
The quotient algebra $U_\chi(\ggg):=U(\ggg)\slash I_\chi$ is called the reduced enveloping superalgebra of $p$-character $\chi$.  Then the $\chi$-reduced module category of $\ggg$ coincides with the $U_\chi(\ggg)$-module category. If $\hhh$ is a restricted Lie subalgebra of $\ggg$, we often make use of  $\chi|_{\hhh_{\bz}}$ for $\chi\in \ggg^*_\bz$ when we consider the $U_\chi(\ggg)$-module category and its  objects  induced from $\hhh$-modules. By abuse of notations, we will simply write $\chi|_{\hhh_{\bz}}$ as $\chi$.

By the PBW theorem, the superalgebra $U_\chi(\ggg)$ has a basis \[x_{1}^{a_{1}}\cdots x_{s}^{a_{s}}y_{1}^{b_{1}}\cdots y_{t}^{b_{t}},
 0\leq a_{i}\leq p-1; b_{j}\in\{0,1\}\mbox{ for }i=1,\cdots s; j=1,\cdots,t,\]
and $\dim U_\chi(\ggg)=p^{\dim \ggg_\bz}2^{\dim \ggg_\bo}$.

 The reduced enveloping algebra corresponding to $\chi=0$ is  $U_0(\ggg)$.  We call it the restricted enveloping algebra of $\ggg$. The modules of $U_0(\ggg)$ are called restricted modules of $\ggg$. The following observation is clear.

\begin{lemma} The dimensions of irreducible modules of a restricted Lie algebra $\ggg$ are not greater  than $p^{\dim \ggg_\bz}2^{\dim \ggg_\bo}$.
\end{lemma}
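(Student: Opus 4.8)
The plan is to remove the infinite-dimensionality of $U(\ggg)$ from the picture at once, and then quote the most elementary fact about finite-dimensional algebras. Let $V$ be an irreducible $U(\ggg)$-module. As explained in the paragraph preceding the lemma, Schur's Lemma forces each central element $x^p-x^{[p]}$ with $x\in\ggg_\bz$ to act on $V$ by a scalar, which we write as $\chi(x)^p$ for a (unique) $\chi\in\ggg_\bz^*$; hence $V$ is annihilated by $I_\chi$ and is therefore a module over the reduced enveloping superalgebra $U_\chi(\ggg)=U(\ggg)/I_\chi$, which by the PBW basis exhibited above is finite-dimensional with $\dim_\bk U_\chi(\ggg)=p^{\dim\ggg_\bz}2^{\dim\ggg_\bo}$.

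The second step is the standard observation that over any finite-dimensional $\bk$-algebra $A$ a simple module $S$ satisfies $\dim_\bk S\le\dim_\bk A$: picking $0\neq v\in S$, the $A$-linear map $A\to S$, $a\mapsto av$, has nonzero image, hence (by simplicity of $S$) is surjective, so $S$ is a quotient of the regular module $A$. Applying this to $A=U_\chi(\ggg)$ and $S=V$ yields $\dim_\bk V\le p^{\dim\ggg_\bz}2^{\dim\ggg_\bo}$, which is the assertion.

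The only point that is not purely formal is the first step, namely that an arbitrary irreducible $U(\ggg)$-module (which a priori is a module over the infinite-dimensional algebra $U(\ggg)$, and is not assumed finite-dimensional) really does factor through some $U_\chi(\ggg)$. I would justify this using the module-finiteness recorded above: since $U(\ggg)$ is free of finite rank over the central polynomial subalgebra $\cz_0=\bk[\xi_1,\dots,\xi_s]$, the module $V=U(\ggg)v$ (for any $0\neq v\in V$) is finitely generated over $\cz_0$, so by Nakayama there is a maximal ideal $\mmm\subset\cz_0$ with $\mmm V\neq V$. As $\cz_0$ is central, $\mmm V$ is a $U(\ggg)$-submodule, and irreducibility of $V$ forces $\mmm V=0$. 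Since $\bk$ is algebraically closed, $\mmm=(\xi_1-c_1,\dots,\xi_s-c_s)$ for some $c_i\in\bk$, and writing $c_i=\chi(x_i)^p$ (the Frobenius being bijective on $\bk$) and extending $\chi$ linearly, $V$ becomes a module over $U(\ggg)/\mmm U(\ggg)$, whose $\bk$-basis is the image of the PBW basis, so $\dim_\bk\bigl(U(\ggg)/\mmm U(\ggg)\bigr)=p^{\dim\ggg_\bz}2^{\dim\ggg_\bo}$. This also re-proves that $V$ is finite-dimensional and legitimizes the appeal to Schur's Lemma.

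I do not expect a genuine obstacle here: the content is entirely in the observation that $U(\ggg)$ is module-finite over its $p$-center, which reduces everything to finite-dimensional algebra, and this is already available from the PBW discussion above. If one wished to avoid even Nakayama, an alternative is to invoke the Nullstellensatz for finitely generated algebras (Quillen's Lemma) to get $\End_{U(\ggg)}(V)=\bk$, which again yields the factorization through $U_\chi(\ggg)$; but the Nakayama argument is more elementary and self-contained in the present setting.
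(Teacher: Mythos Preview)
Your argument is correct and follows the same line the paper sets up in the paragraph preceding the lemma: factor an irreducible $V$ through a reduced enveloping algebra $U_\chi(\ggg)$ of dimension $p^{\dim\ggg_\bz}2^{\dim\ggg_\bo}$, then bound $\dim V$ by $\dim U_\chi(\ggg)$ via the regular representation. The paper itself offers no proof beyond ``The following observation is clear,'' so your write-up is in fact more complete than the original.

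The one place you go beyond the paper is worth noting: the paper invokes Schur's Lemma to get the scalar action of $x^p-x^{[p]}$ without commenting on why this is legitimate for an a priori infinite-dimensional $V$, whereas you supply a clean Nakayama argument using the module-finiteness of $U(\ggg)$ over $\cz_0$ to force $V$ to be killed by some maximal ideal of $\cz_0$, thereby making $V$ finite-dimensional and the use of Schur's Lemma unproblematic. This is a genuine (if standard) improvement in rigor over what the paper records, and your alternative via Quillen's Lemma would work equally well.
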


\subsection{Minimal $p$-envelopes of  finite-dimensional Lie superalgebras}     Any finite-dimensional Lie superalgebra  can be embedded in a finite-dimensional restricted Lie superalgebra (see the appendix \S\ref{sec: appendix}).
Let $\ggg=\ggg_\bz+\ggg_\bo$ be any given Lie superalgbra.  There is a minimal finite-dimensional restricted Lie superalgebra $\ggg_p$ such that  $\ggg_p=(\ggg_\bz)_p+\ggg_\bo$ is a $p$-envelope of $\ggg$, and  $(\ggg_\bz)_p$  a $p$-envelope of $\ggg_\bz$ (see Lemma \ref{lem: app} in the appendix section). Then, one can still show that dimensions of all irreducible modules of $\ggg$ has unified upper-bound, by considering its minimal $p$-envelope. There is a natural question.
\begin{question} What is the maximal dimension for irreducible modules over $\ggg$?
\end{question}

\subsection{} With aim at the above question, the purpose of the present paper is to formulate the maximal irreducible dimensions for finite-dimensional restricted Lie superalgeberas over $\bk$,  as a conjecture (see Conjecture \ref{conj}).  This conjecture is regarded a super version of the plausible first Kac-Weisfeiler conjecture (see Remark \ref{rem: sKW con}(2), or \cite{WK}, \cite{Kac2}). The progress of the work on the first Kac-Weisfeiler conjecture can be learnt from  \cite{PrSk}, \cite{MST}.

The main body of the text is devoted to the verification   of the super first Kac-Weisfeiler conjecture in the case of basic classical Lie superalgebras and complete solvable Lie superalgebras.

\section{Maximal dimensions of irreducible modules for a finite-dimensional restricted Lie superalgebra}

    {
Keep the notations and assumption as above. In particular, $\ggg=\ggg_\bz\oplus\ggg_\bo$ is a finite-dimensional restricted Lie superalgebra over $\bk$.}
For any given $\chi\in \ggg_\bz^*$, consider the bilinear form $\textsf{B}_\chi$ on $\ggg$ with regarding $\chi\in \ggg^*$ by trivial extension
$$\tsb_\chi: \ggg\times \ggg\rightarrow \bk, (X,Y)\mapsto \chi([X,Y]).$$
 Set $\ker(\tsb_\chi)=\{X\in \ggg\mid \tsb_\chi(X,\ggg)=0\}$.
\subsection{} { Generally, $\ggg^*$ can be regarded a $\ggg$-module via defining for $X\in\ggg_{|X|}, f\in \ggg^*_{|f|}$, $X.f:\ggg\rightarrow \bk$ with $(X.f)(Y)=-(-1)^{|X||f|}f{([X,Y])}, \forall Y\in\ggg$. Here $|X|$ and $|f|$ denote the parities of the $\bbz_2$-homogeneous element $X\in\ggg$ and $f\in\ggg^*$, respectively.
   So we can define the centralizer of $\chi$ in $\ggg$, which is denoted by $\zzz^\chi$. By definition,
$$\zzz^\chi=\{X\in\ggg\mid X.\chi=0, \text{ equivalently, } \chi([X,\ggg])=0\}.$$
Then this $\zzz^\chi$ is  exactly equal to  $\ker(\tsb_\chi)$. }
Furthermore, with $\tsb_\chi$ we may define bilinear forms  on the spaces $\tilde\ggg:=\ggg\slash \zzz^\chi$, $\tilde\ggg_\bz:=\ggg_\bz\slash \zzz_\bz^\chi$ and $\tilde\ggg_\bo:=\ggg_\bo\slash \zzz_\bo^\chi$ respectively. By abuse of notations, those bilinear forms are still denoted by $\tsb_\chi$.

In the following arguments, we need some conventions and notations.   Let $\lceil a\rceil$ denote the  greatest  integer lower bound of $a$ for a real number $a\in\bbr$, and $\lfloor a\rfloor$ denote the  least integer upper bound of $a$.

\begin{lemma}\label{lem: non-deg bil forms}
The following statements hold.
\begin{itemize}\label{lem: 2.1}
\item[(1)] The centralizer $\zzz^\chi=\zzz^\chi_\bz+\zzz^\chi_\bo$ is a restricted subalgebra of $\ggg$ if $\ggg$ itself is a restricted Lie superalgebra.
\item[(2)]  $\tsb_\chi$ is a  non-degenerate skew-symmetric bilinear form on $\tilde \ggg_\bz$, and  a  non-degenerate skew-symmetric bilinear form on $\tilde\ggg_\bo$.
 Consequently, $\dim (\ggg_\bz-\zzz_\bz^\chi)$ is even.

\item[(3)] Any maximal isotropic space in $\ggg_\bz$ with respect to $\tsb_\chi$  has dimension $\frac{\dim\ggg_\bz+\dim\zzz_\bz^\chi}{2}$.

 \item[(4)] Any maximal isotropic space   in $\ggg_\bo$ with respect to $\tsb_\chi$ has dimension $\frac{\dim\ggg_\bo+\dim\zzz^\chi_\bo}{2}$ if $\dim\ggg_\bo-\dim\zzz^\chi_\bo$ is even, and has dimension $\frac{\dim\ggg_\bo+\dim\zzz^\chi_\bo-1}{2}$ if  $\dim\ggg_\bo-\dim\zzz^\chi_\bo$ is odd.
\end{itemize}
\end{lemma}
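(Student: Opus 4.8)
The plan is to treat the four statements in order, with (2) as the crucial input from which (3) and (4) follow by standard linear-algebra facts about bilinear forms on vector spaces, adapted to the $\bbz_2$-graded setting.

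For (1), the key observation is that $\zzz^\chi_\bz=\ker(\tsb_\chi)\cap\ggg_\bz$ is already known to be the centralizer of $\chi$ inside $\ggg_\bz$; since $\chi$ is purely even, this is precisely the stabilizer of $\chi$ under the coadjoint action of the restricted Lie algebra $\ggg_\bz$, hence a restricted subalgebra by the usual argument (if $x.\chi=0$ then $x^{[p]}.\chi=(\ad x)^{p}.\chi$ evaluated against $\chi$ vanishes, using property (b) of the $p$-mapping together with $\chi([x^{[p]},\cdot])=\chi((\ad x)^p(\cdot))$, and the latter is $0$ by iterating $x.\chi=0$). The odd part $\zzz^\chi_\bo$ is automatically carried along since $\zzz^\chi$ is an ideal-theoretic kernel of a module map, so $\zzz^\chi=\zzz^\chi_\bz+\zzz^\chi_\bo$ is a graded subalgebra, and restrictedness only concerns the even part. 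I would spell out the $p$-mapping computation in one or two lines.

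For (2), note that $\tsb_\chi$ is even (it pairs $\ggg_\bz$ with $\ggg_\bz$ and $\ggg_\bo$ with $\ggg_\bo$, since $\chi$ kills the odd part of any bracket $[\ggg_\bz,\ggg_\bo]\subseteq\ggg_\bo$ — wait, more carefully: $\chi$ is even so $\chi([X,Y])=0$ unless $|X|+|Y|=\bz$, which forces $X,Y$ both even or both odd). On $\ggg_\bz$ the form $\tsb_\chi(X,Y)=\chi([X,Y])$ is skew-symmetric in the ordinary sense because $[X,Y]=-[Y,X]$ for even $X,Y$; on $\ggg_\bo$ it is \emph{symmetric} in the ordinary sense, since $[X,Y]=[Y,X]$ for odd elements in a Lie superalgebra — here I would be careful that the statement as phrased says ``skew-symmetric'' on $\tilde\ggg_\bo$ as well, so I would either reinterpret ``skew-symmetric'' in the $\bbz_2$-graded sense (super-antisymmetric, which on the odd part \emph{is} ordinary symmetry) or simply record that the induced form on $\tilde\ggg_\bo$ is a non-degenerate symmetric bilinear form. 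In either case the content is: passing to the quotient by $\zzz^\chi_\bz$ (resp. $\zzz^\chi_\bo$) kills exactly the radical of the form, so the induced form is non-degenerate by construction. Non-degeneracy of a skew-symmetric form on $\tilde\ggg_\bz$ forces $\dim\tilde\ggg_\bz$ to be even, giving the parity claim.

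For (3) and (4), I would invoke the standard structure theory. A non-degenerate alternating form on a $2m$-dimensional space has all maximal isotropic subspaces of dimension $m$; lifting through the projection $\ggg_\bz\to\tilde\ggg_\bz$, a maximal isotropic subspace of $\ggg_\bz$ is the preimage of a maximal isotropic subspace of $\tilde\ggg_\bz$, hence has dimension $\dim\zzz^\chi_\bz+\tfrac12\dim\tilde\ggg_\bz=\dim\zzz^\chi_\bz+\tfrac12(\dim\ggg_\bz-\dim\zzz^\chi_\bz)=\tfrac{\dim\ggg_\bz+\dim\zzz^\chi_\bz}{2}$, which is (3). For (4), the induced form on $\tilde\ggg_\bo$ is a non-degenerate symmetric bilinear form over the algebraically closed field $\bk$ of characteristic $p>2$; such a form on an $n$-dimensional space has maximal totally isotropic subspaces of dimension $\lfloor n/2\rfloor$ — equal to $n/2$ when $n$ is even and $(n-1)/2$ when $n$ is odd. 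Adding back $\dim\zzz^\chi_\bo$ from the kernel gives the two cases in (4). The main obstacle is really just the bookkeeping in (2): making sure the parity/symmetry conventions for ``skew-symmetric'' in the super setting are used consistently, and that the characteristic-$p$ hypothesis $p>2$ is genuinely needed (it is, both for the symmetric-form classification over $\bk$ and implicitly throughout) — the rest is routine once the non-degeneracy on the quotients is in hand.
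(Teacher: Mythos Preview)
Your proposal is correct and follows essentially the same route as the paper: pass to the quotients $\tilde\ggg_\bz$, $\tilde\ggg_\bo$ to obtain non-degenerate forms, invoke the standard dimension count for maximal isotropic subspaces, and lift back via the preimage containing $\zzz^\chi$. The paper dispatches (1) and (2) in a single line (``directly follows from the definition''), whereas you supply the explicit $p$-mapping check for (1) and correctly diagnose that the induced form on $\tilde\ggg_\bo$ is \emph{symmetric} in the ordinary sense (super-skew-symmetric restricted to odd elements); this is a genuine clarification of what the statement's phrasing leaves ambiguous, and it is exactly what is needed to get the $\lfloor n/2\rfloor$ (standard notation) Witt-index bound in (4).
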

\begin{proof} The parts (1) and (2) directly follows from the definition.
As to (3),  we first note that $\zzz^\chi$ is an isotropic subspace of $\ggg$ with respect to $\tsb_\chi$.  From the part (2) it follows that a maximal isotropic subspace $\tilde V$ of $\tilde\ggg_\bz$
has dimension $\frac{\dim \ggg_\bz-\dim \zzz^\chi_\bz}{2}$. So naturally, the preimage space of $\tilde V$ in $V$ which contains $\zzz^\chi$ is a  maximal isotropic subspace of $\ggg_\bz$. This maximal isotropic subspace has dimension $\frac{\dim\ggg_\bz+\dim\zzz_\bz^\chi}{2}$.

As to  the part (4), from the part (2) again  it follows that a maximal isotropic subspace $\tilde W$ of $\tilde\ggg_\bo$ 
has dimension
$\lceil\frac{\dim\ggg_\bo-\dim\zzz^\chi_\bo}{2}\rceil$.
By the same reason,  the preimage space of $\tilde W$ of $\tilde\ggg_\bo$ which contains $\zzz^\chi$ is a  maximal isotropic subspace of $\ggg_\bo$. Consequently, this maximal isotropic subspace has  dimension $\lceil\frac{\dim\ggg_\bo+\dim\zzz_\bo^\chi}{2}\rceil$.

The proof is completed.
\end{proof}

\begin{remark}\label{rem: 2.2} With the notations  $\lceil a\rceil$ and $\lfloor a\rfloor$ for $a\in\bbr$,  Lemma \ref{lem: 2.1}(4) becomes that the maximal isotropic space with respect to $\tsb_\chi$  in $\ggg_\bo$ has dimension $\lceil\frac{\dim\ggg_\bo+\dim\zzz^\chi_\bo}{2}\rceil$. Set     {
 $$d(\ggg,\chi)=(\frac{\dim\ggg_\bz+\dim\zzz^\chi_\bz}{2}| \lceil\frac{\dim\ggg_\bo+\dim\zzz^\chi_\bo}{2}\rceil).$$
 }
 This $d(\ggg,\chi)$ is the maximal super-dimension of the isotropy subspaces of $\ggg$ with respect to $\tsb_\chi$. Set $i(\ggg,\chi)=\underline{\dim}\ggg-d(\ggg,\chi)$. Then
     {
 $$i(\ggg,\chi)=(\frac{\dim\ggg_\bz-\dim\zzz^\chi_\bz}{2}| \lfloor\frac{\dim\ggg_\bo-\dim\zzz^\chi_\bo}{2}\rfloor).$$
 }
 \end{remark}

 \subsection{The set $D(\ggg,\chi)$  of degraded  subalgebras associated with $\chi$}\label{sec: degraded and derived} We regard $\chi\in\ggg_\bz^*$ as a linear function on $\ggg^*$ by trivial extension.     {Associated with $\chi$, we say that a subalgebra $\hhh$ is degraded if $\underline\dim\hhh=d(\ggg,\chi)$ and $\chi(\hhh^{(1)})=0$. Here and further, $L^{(1)}$ for a Lie (super)algebra $L$ denotes the derived subalgebra of $L$, i.e. $L^{(1)}=[L,L]$. Obviously, such subalgebras contain $\zzz^\chi$ if they exist. In this case, they are further restricted subalgebras whenever $\ggg$ is a restricted Lie superalgebra.}

 Denote by $D(\ggg,\chi)$ the set of  all degraded subalgebras $\hhh$  of $\ggg$.


\subsection{} For the simplicity of arguments, we say that a pair of non-negative integers $(a|b)$ is a super-datum. Call $a$ and $b$ its even entry and odd entry, respectively.   For  $\chi\in\ggg^*_\bz\subset\ggg^*$,
    {we set
\begin{align*}
&b^\chi_0=\dim\ggg_\bz-\dim\zzz^\chi_\bz\cr
 & b^\chi_1=\dim\ggg_\bo-\dim\zzz^\chi_\bo.
\end{align*}
Correspondingly, $i(\ggg,\chi)=(\frac{b^\chi_0}{2}| \lfloor\frac{b^\chi_1}{2}\rfloor)$.}
Also set
$$\mathscr{M}(\ggg) =\max_{\chi\in \ggg_\bz^*}p^{\frac{b^\chi_0}{2}}2^{{\lfloor\frac{b^\chi_1}{2}\rfloor}}.$$

\begin{conj}\label{conj} Let $\ggg$ be a finite-dimensional restricted Lie superalgebra over $\bk$.
The maximal dimension of irreducible $\ggg$-modules  is  $\mathscr{M}(\ggg)$. 
\end{conj}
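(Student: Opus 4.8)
Since Conjecture~\ref{conj} is open in general, the realistic goal is to establish it for the two classes highlighted above — completely solvable restricted Lie superalgebras and basic classical ones — and I would organize both inside one framework. Let $V$ be an irreducible $U(\ggg)$-module with $p$-character $\chi\in\ggg_\bz^*$; then $V$ is irreducible over the reduced enveloping superalgebra $U_\chi(\ggg)$ of dimension $p^{\dim\ggg_\bz}2^{\dim\ggg_\bo}$. It suffices to prove two things: \textbf{(i)} the per-character upper bound $\dim V\le p^{b^\chi_0/2}2^{\lfloor b^\chi_1/2\rfloor}$; and \textbf{(ii)} that for some $\chi$ achieving $\mathscr{M}(\ggg)=p^{b^\chi_0/2}2^{\lfloor b^\chi_1/2\rfloor}$ there is an irreducible $U_\chi(\ggg)$-module of exactly that dimension. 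The engine for (ii) is a super baby Verma module: assuming $D(\ggg,\chi)\ne\varnothing$, pick a degraded subalgebra $\hhh$ (so $\underline{\dim}\hhh=d(\ggg,\chi)$, $\chi(\hhh^{(1)})=0$, $\zzz^\chi\subseteq\hhh$, and $\hhh$ restricted), choose by a standard Artin--Schreier surjectivity a character of $\hhh$ yielding a one-dimensional $U_\chi(\hhh)$-module $\bk_\chi$, and set $Z_\chi(\hhh)=U_\chi(\ggg)\otimes_{U_\chi(\hhh)}\bk_\chi$. By the PBW basis and Remark~\ref{rem: 2.2}, $\dim Z_\chi(\hhh)=p^{\dim\ggg_\bz-\dim\hhh_\bz}2^{\dim\ggg_\bo-\dim\hhh_\bo}=p^{b^\chi_0/2}2^{\lfloor b^\chi_1/2\rfloor}$, so (ii) reduces to showing that for generic $\chi$ the module $Z_\chi(\hhh)$ is irreducible as a \emph{super}-module. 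The reason the odd factor remains $2^{\lfloor b^\chi_1/2\rfloor}$ without the loss of a power of two when $b^\chi_1$ is odd is precisely that the Clifford superalgebra attached to an odd-dimensional nondegenerate odd space is of queer type, whose unique simple super-module has dimension $2^{(b^\chi_1+1)/2}=2^{\lfloor b^\chi_1/2\rfloor}$; this is also why the bookkeeping of Remark~\ref{rem: 2.2} is the right one.

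\textbf{Completely solvable case.} I would induct on $\dim\ggg$, imitating the classical solvable Kac--Weisfeiler argument. The odd directions are governed by the Clifford superalgebra of the symmetric form $(Y,Y')\mapsto\chi([Y,Y'])$ on $\ggg_\bo$, of radical $\zzz^\chi_\bo$ and rank $b^\chi_1$; complete solvability supplies a flag of restricted ideals of codimension one, $\chi$-polarizations exist by a super analogue of Vergne's construction (so $D(\ggg,\chi)\ne\varnothing$), and one shows by the usual central reductions (peel off a central element acting by a nonzero scalar; otherwise descend to a proper restricted subquotient; cf.\ \cite{PrSk}) that every irreducible is induced from a degraded subalgebra and that for generic $\chi$ the module $Z_\chi(\hhh)$ is already irreducible. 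This yields both (i) and (ii); that $\mathscr{M}(\ggg)$ is attained at such $\chi$ follows from $\dim\zzz^\chi_\bz\ge\mathrm{ind}\,\ggg_\bz$ with equality on a dense subset of $\ggg_\bz^*$, together with the corresponding statement for the rank of the odd form.

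\textbf{Basic classical case.} Here $\ggg$ carries a nondegenerate even invariant form, so $\ggg\cong\ggg^*$ as $\ggg$-modules and $\chi$ corresponds to $x_\chi\in\ggg_\bz$ with $\zzz^\chi=\ggg^{x_\chi}$; since $\chi$ vanishes on $\ggg_\bo$, one has $\zzz^\chi_\bz=\ggg_\bz^{x_\chi}$ (computed inside the reductive Lie algebra $\ggg_\bz$), while $\zzz^\chi_\bo$ is the radical of $(Y,Y')\mapsto\kappa(x_\chi,[Y,Y'])$. I would first pass, via the Jordan decomposition $x_\chi=x_s+x_n$ and a super version of the Kac--Weisfeiler/Premet reduction, to the smaller basic classical subalgebra $\ggg^{x_s}$ and a nilpotent $p$-character. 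For $x_n$ regular nilpotent, one computes using the triangular decomposition $\ggg_\bo=\nnn_\bo\oplus\bar{\nnn}_\bo$ that $\dim\zzz^\chi_\bz=\mathrm{rank}\,\ggg$ and $\zzz^\chi_\bo=0$, whence $b^\chi_0/2=\dim\nnn_\bz$ and $\lfloor b^\chi_1/2\rfloor=\dim\nnn_\bo$; so $\mathscr{M}(\ggg)=p^{\dim\nnn_\bz}2^{\dim\nnn_\bo}$, which is exactly the dimension of the baby Verma module computed by Wang--Zhao, and for regular nilpotent $\chi$ that module is known to be irreducible, giving (ii). The bound (i) follows from the Premet-type upper bound $\dim V\le p^{b^\chi_0/2}2^{\lfloor b^\chi_1/2\rfloor}$, available precisely because of the invariant form, together with an orbit-by-orbit check that no non-regular nilpotent $\chi$ (with larger $\dim\zzz^\chi_\bz$ but possibly smaller $\dim\zzz^\chi_\bo$) makes the product $p^{b^\chi_0/2}2^{\lfloor b^\chi_1/2\rfloor}$ larger.

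\textbf{Main obstacle.} The two delicate points are the existence of degraded subalgebras, i.e.\ $D(\ggg,\chi)\ne\varnothing$ (super-polarizations need not exist for an arbitrary restricted Lie superalgebra, and one must honor $d(\ggg,\chi)$ and the queer-Clifford correction exactly as in Remark~\ref{rem: 2.2} rather than a naive half-dimension), and the generic irreducibility of $Z_\chi(\hhh)$ — the honest super analogue of the Zassenhaus/Rudakov ``generic Azumaya'' theorem, asserting that $U_\chi(\ggg)$ is, up to a fixed (possibly queer) matrix Clifford factor, Azumaya over a dense open subset of $\Spec\,\cz_0$. I expect this generic-irreducibility/Azumaya step to be the principal difficulty, with the orbit-wise verification in the basic classical case a close second.
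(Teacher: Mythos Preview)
Your framework for the completely solvable case is in the right spirit and close to the paper's, though the paper organizes it differently: rather than proving directly that every irreducible is induced from a \emph{degraded} subalgebra (which would require controlling the super-dimension of the inducing $\hhh$), the paper first shows (Proposition~\ref{prop: 5.3}(2)) that \emph{all} irreducible $U_\chi(\ggg)$-modules for a fixed $\chi$ have the same dimension, and then separately (Proposition~\ref{prop: 5.4}, using Corollary~\ref{coro: 5.2}) exhibits one of dimension $p^{b^\chi_0/2}2^{\lfloor b^\chi_1/2\rfloor}$ by induction on $\dim\ggg$ through a codimension-one subalgebra. This sidesteps any ``generic Azumaya'' statement entirely.

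In the basic classical case there is a genuine gap. First, Wang--Zhao's super Kac--Weisfeiler theorem asserts that $p^{b^\chi_0/2}2^{\lfloor b^\chi_1/2\rfloor}$ \emph{divides} $\dim V$; it is not an upper bound, so your item~(i) does not follow from it. The paper's upper bound is instead the uniform baby Verma bound of Lemma~\ref{lem: 3.1}: $\dim V\le p^{\dim\nnn_\bz^-}2^{\dim\nnn_\bo^-}$ for every $\chi$ (after conjugating so that $\chi(\nnn_\bz^+)=0$). Second, your choice of regular \emph{nilpotent} $\chi$ does not work: the claim $\zzz^\chi_\bo=0$ is false in general (for instance in $\mathfrak{gl}(2|1)$ the odd centralizer of a regular nilpotent even element is two-dimensional), so regular nilpotent $\chi$ need not maximize $p^{b^\chi_0/2}2^{\lfloor b^\chi_1/2\rfloor}$; nor is irreducibility of the baby Verma known for such $\chi$. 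The paper instead takes $\chi$ regular \emph{semisimple}, for which $\zzz^\chi=\hhh$ (the Cartan), hence $b^\chi_0=2\dim\nnn_\bz^-$ and $b^\chi_1=\dim\ggg_\bo=2\dim\nnn_\bo^-$, and invokes Zhao's theorem (Theorem~\ref{thm: zhao}) that $Z_\chi(\lambda)$ is then irreducible. Together with Lemma~\ref{lem: 3.1} this gives both the upper bound and its attainment in one stroke, with no orbit-by-orbit verification required.
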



 \begin{remark} \label{rem: sKW con}
     {(1) Clearly,  by definition $\mathscr{M}(\ggg)$ can be expressed as $p^{\frac{b_0}{2}}2^{\lfloor\frac{b_1}{2}\rfloor}$  for some non-negative integers $b_0$ and $b_1$. In general, such $b_0$ and $b_1$ are not necessarily unique. However, we will see that in many cases,  $b_0=\max_{\chi\in\ggg_\bz^*}b_0^\chi$ and $b_1=\max_{\chi\in\ggg_\bz^*}b^\chi_1$, which are unique.

 (2) The formulation in the above conjecture becomes the first Kac-Weisfeiler conjecture when $\ggg_\bo=0$, i.e. a finite-dimensional restricted Lie algebra $\ggg_\bz$ is regarded a restricted Lie superalgebra with the odd part being zero.}

 (3) This conjecture is a super version of the first Kac-Weisfeiler conjecture on irreducible modules of restricted Lie algebras (see \cite{Kac2})\footnote{There is some counterexample against the first Kac-Weisfeiler conjecture for non-restricted Lie algebras, see \cite{T}}. For the latter, the study has been in a great progress, but the question is still open (see  \cite{PrSk}, \cite{MST}). There are remarkable works (see \cite{Kac2}, \cite{Pre},  \cite{WZ1}) concerning another (the second) Kac-Weisfeiler conjecture on irreducible modules of  Lie algebras of reductive groups in prime characteristic and its super version. Some related progress  can be found in  \cite{GT}, \cite{ WZ2},  \cite{ZengS1}, \cite{ZengS2}.
  \end{remark}

\section{Irreducible modules of basic classical Lie superalgebras}
In this section, we suppose $\ggg$ is a basic classical Lie superalgebra over $\bk$.
Then $\ggg=\ggg_\bz\oplus\ggg_\bo$ with even part being a reductive Lie algebra. As to classical Lie superalgebras of type $P$ and $Q$,     {Conjecture \ref{conj} was very recently confirmed by taking quite different and nontrivial arguments (see \cite{RSYZ}).}

\subsection{Basic classical Lie superalgebras}\label{sec: BCL list} We  list  basic classical Lie superalgebras and their even parts over $\bk$ with the restriction on $p$ (see for example \cite{WZ1}). The restriction on $p$ could be relaxed, but we  always assume this  restriction on $p$ in this section). The most important feature is that each basic classical Lie superalgebra listed below admits a nondegenerate even supersymmetric bilinear form.

\vskip0.3cm
\subsubsection{\label{sec: table}}
\begin{tabular}{ccc}
\hline
 Basic classical Lie superalgebra $\frak{g}$ & $\ggg_\bz$  & characteristic of $\bk$\\
\hline
$\frak{gl}(m|n$) &  $\frak{gl}(m)\oplus \frak{gl}(n)$                &$p>2$            \\
$\frak{sl}(m|n)$ &  $\frak{sl}(m)\oplus \frak{sl}(n)\oplus \bk$    & $p>2, p\nmid (m-n)$   \\
$\frak{osp}(m|n)$ & $\frak{so}(m)\oplus \frak{sp}(n)$                  & $p>2$ \\
$\text{F}(4)$            & $\frak{sl}(2)\oplus \frak{so}(7)$                  & $p>15$  \\
$\text{G}(3)$            & $\frak{sl}(2)\oplus \text{G}_2$                    & $p>15$     \\
$\text{D}(2,1,\alpha)$   & $\frak{sl}(2)\oplus \frak{sl}(2)\oplus \frak{sl}(2)$        & $p>3$   \\
\hline
\end{tabular}

\vskip0.3cm
For  Lie superalgebra $\ggg$ in the list, there is an algebraic supergroup $G$  with $\Lie(G)=\ggg$ satisfying
\begin{itemize}
\item[(1)] $G$ has a purely-even subgroup scheme $G_\ev$ which is an ordinary connected reductive algebraic group with $\Lie(G_\ev)=\ggg_\bz$;
\item[(2)] There is a well-defined action of $G_\ev$ on $\ggg$, giving rise to the adjoint action of $\ggg_\bz$.
    \end{itemize}
    {The above algebraic supergroup are usually called  basic classical supergroups, which can be constructed as  Chevalley supergroups (see \cite{FG1, FG2}). Generally,  for an algebraic supergroup $G$ with $\ggg=\Lie(G)$, $\ggg$ does not determine $G$.
    Instead, the theory of super groups  shows that the pair $(G_\ev, \ggg)$ determines $G$ (see for example, \cite[Chapter 7]{CCF}).   The pair $(G_\ev,\ggg)$ is called a super Harish-Chandra pair ({\sl{ibid}}.).  More precisely, the category of  algebraic supergroups is equivalent to the category of super Harish-Chandra pairs ({\sl{ibid.}}).


 One easily knows that $\ggg=\Lie(G)$ for an algebraic supergroup $G$ is a restricted Lie superalgebra (cf. \cite[Lemma 2.2]{SW} or \cite{SZheng1}).}

\subsection{} Let $\ggg$ be a given basic classical Lie superalgebra.
 We fix a Cartan subalgebra $\hhh$ in $\ggg_\bz$.  Denote by $\Phi$ the root system associate with $\hhh$. Then $\Phi=\Phi_0\cup \Phi_1$ where $\Phi_0$ stands for the set of even roots, and $\Phi_1$ for the set of odd roots.
 Fix a triangular decomposition
$\ggg = \nnn^-\oplus \hhh \oplus \nnn^+$ which is equivalent to say, fix a positive root system $\Phi^+$, or to say fix a simple root system $\Delta$. Here $\nnn^\pm$ stand the Lie subalgebras of positive and negative root vectors, respectively. Furthermore, $\Phi^-=-\Phi^+$, and $\Phi^\pm=\Phi_1^\pm\cup \Phi_0^\pm$.  Moreover, without loss of generality we can assume $\chi(\nnn^+_\bz)=0$ for any $\chi\in \ggg_\bz^*$, up to $G_\ev$-conjugation. Let $\bbb= \hhh\oplus \nnn^+$. Then any $\lambda\in \hhh^*$ defines  a one-dimensional $U_\chi(\hhh)$-module $\bk_\lambda$ as long as $\lambda$ satisfies $\lambda(H)^p-\lambda(H^{[p]}) =\chi(H)^p$ for all  $H\in\hhh$.
Set
$$\Lambda(\chi)=\{\lambda\in \hhh^* \mid \lambda(H)^p-\lambda(H^{[p]}) =\chi(H)^p \;\;\forall  H\in\hhh\}$$
which clearly contains $p^{\dim\hhh}$ elements.

The one-dimensional space  $\bk_\lambda$ can be regarded  a $U_\chi(\bbb)$-module with trivial $\nnn^+$-action, because of $\chi(\nnn^+_\bz)=0$. Then we define an induced module (called  a baby Verma module)
$$Z_\chi(\lambda) := U_\chi(\ggg)\otimes_{U_\chi(\bbb)}\bk_\lambda.$$
Obviously,  $\dim Z_\chi(\lambda)=p^{\dim\nnn_\bz^-} 2^{\dim \nnn_\bo^-}$.
The following fact is clear.

 \begin{lemma}\label{lem: 3.1}
 Any irreducible $U_\chi(\ggg)$-module has dimension not bigger than
$\dim Z_\chi(\lambda)$.
\end{lemma}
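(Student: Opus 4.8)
The plan is to show that every irreducible $U_\chi(\ggg)$-module is a homomorphic image of a baby Verma module $Z_\chi(\lambda)$ for a suitable $\lambda\in\Lambda(\chi)$, and hence cannot exceed it in dimension. First I would let $V$ be any irreducible $U_\chi(\ggg)$-module. Since $\bbb=\hhh\oplus\nnn^+$ is a restricted (in fact solvable) Lie subalgebra and we have arranged $\chi(\nnn^+_\bz)=0$ up to $G_\ev$-conjugation, the restriction of $V$ to $U_\chi(\bbb)$ contains a one-dimensional submodule: indeed $\nnn^+$ acts nilpotently (its even part acts by $p$-nilpotent operators since $\chi$ vanishes there, and its odd part squares into the even part), so the subspace $V^{\nnn^+}=\{v\in V\mid \nnn^+\cdot v=0\}$ is nonzero. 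A nonzero homogeneous vector $v_0\in V^{\nnn^+}$ generates under $U_\chi(\hhh)$ a finite-dimensional module over the commutative semisimple algebra $U_\chi(\hhh)\cong\bk[\hhh]/(\,H^p-H^{[p]}-\chi(H)^p\,)$, so it contains a common eigenvector; this eigenvector has weight some $\lambda\in\Lambda(\chi)$ and is killed by $\nnn^+$, hence spans a copy of the $U_\chi(\bbb)$-module $\bk_\lambda$ inside $V$.

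Next, by Frobenius reciprocity for the inclusion $U_\chi(\bbb)\hookrightarrow U_\chi(\ggg)$, the inclusion $\bk_\lambda\hookrightarrow V|_{U_\chi(\bbb)}$ corresponds to a nonzero $U_\chi(\ggg)$-module homomorphism
\[
Z_\chi(\lambda)=U_\chi(\ggg)\otimes_{U_\chi(\bbb)}\bk_\lambda\longrightarrow V.
\]
Since $V$ is irreducible this map is surjective, so $\dim V\le\dim Z_\chi(\lambda)=p^{\dim\nnn_\bz^-}2^{\dim\nnn_\bo^-}$, which is exactly the asserted bound.

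I expect the only real point requiring care is the assertion that $V^{\nnn^+}\neq 0$, i.e. that $\nnn^+$ acts by a nilpotent system of operators on $V$. In the purely even case this is the standard fact that $x^p$ acts by $\chi(x)^p=0$ for $x\in\nnn^+_\bz$ together with Engel's theorem; in the super setting one notes that $\nnn^+_\bo$ consists of root vectors for positive odd roots, squares of such vectors lie in $\nnn^+_\bz$ (they are even root vectors or zero), and $U_\chi(\nnn^+)$ is therefore a finite-dimensional local superalgebra with nilpotent augmentation ideal, so it must annihilate some nonzero vector of the finite-dimensional module $V|_{U_\chi(\nnn^+)}$. Everything else — the structure of $U_\chi(\hhh)$, existence of a weight eigenvector, Frobenius reciprocity, and the dimension count for $Z_\chi(\lambda)$ via the PBW basis — is routine and already recorded in the excerpt.
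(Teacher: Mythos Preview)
Your proposal is correct and follows essentially the same route as the paper's own argument: reduce to $\chi(\nnn^+_\bz)=0$, use nilpotency of the $\nnn^+$-action to find a one-dimensional $U_\chi(\bbb)$-submodule $\bk_\lambda\subset V$, and conclude that $V$ is a quotient of $Z_\chi(\lambda)$. You supply more detail than the paper (on why $\nnn^+$ acts nilpotently in the super setting and on extracting an $\hhh$-eigenvector), but the underlying strategy is identical.
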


    {
The proof is standard. We give an account on it.  By the above arguments, we only need to consider the case $\chi(\nnn^+)=0$. In this case, for any irreducible $U_\chi(\ggg)$-module $V$, $\nnn^+$ acts nilpotently on $V$. Hence $V$ admits one-dimensional $U_\chi(\bbb)$-module  $\bk_\lambda$ with $\nnn^+$-trivial action, and $U_\chi(\hhh)$-action by some function $\lambda\in \Lambda(\chi)$. So $V$ coincides with  $U_\chi(\nnn^-)\bk_\lambda$ which is isomorphic to an irreducible quotient of $Z_\chi(\lambda)$. The lemma follows.}



\subsection{}
Recall that $\ggg_\bz=\Lie(G_\ev)$, and  any element $X\in\ggg_\bz$ admits Jordan-Chevalley decomposition $X=X_s+X_n$ with $X_s$ being  semisimple and $X_n$ nilpotent. Note that for a  basic classical Lie algebra $\ggg$ listed in \ref{sec: table},
there is a non-degenerate $G_\bz$-invariant symmetric bilinear form $(\cdot,\cdot)$ on $\ggg_\bz$. Hence there is a $G$-equivariant isomorphism between $\ggg_\bz$ and $\ggg_\bz^*$.  Consequently, for any $\chi\in\ggg_\bz^*$, there exists a unique $X$ such that $\chi=(X,-)$, which leads to the Jordan-Chevalley decomposition $\chi=\chi_s+\chi_n$ with $\chi_s=(X_s,-)$ and $\chi_n=(X_n,-)$. Furthermore,
  there exists $g\in G_\ev$ such that  $(g.\chi_s)(\nnn_\bz^\pm)=0$, and $(g.\chi_n)(\bbb_\bz)=0$ where the action of $g.$ means the coadjoint action. We say that $\chi$ is  semisimple if   $\chi=\chi_s$, and that $\chi$ is  nilpotent if $\chi=\chi_n$.
For simplicity, we always assume that $\chi=\chi_s+\chi_n$ with $\chi_s(\nnn_\bz^\pm)=0$, and $\chi_n(\bbb_\bz)=0$ in the following because the coadjoint action gives rise to an isomorphism between $U_\chi(\ggg)$ and $U_{g.\chi}(\ggg)$.

     For a semisimple $p$-character $\chi\in \ggg_\bz^*$, we say that $\chi$ is  regular semisimple if  $\chi(H_\alpha)$ are nonzero  for all $\alpha$ from $\Phi$ where $H_\alpha$ is the Cartan toral  element corresponding to $\alpha$. By Lei Zhao's result we have

\begin{theorem} (\cite[Theorems 4.6, 4.7]{Zhao}) \label{thm: zhao}
Suppose $\chi$ is regular semisimple, and $\lambda\in \Lambda_\chi$. Then $Z_\chi(\lambda)$ is irreducible.
\end{theorem}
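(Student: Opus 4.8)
The plan is to adapt Friedlander--Parshall's treatment of regular semisimple $p$-characters of reductive Lie algebras to the super setting. By the reductions recalled above I may assume $\chi$ is supported on $\hhh$, so that $\chi(\nnn^\pm_\bz)=0$ and, by hypothesis, $\chi(H_\alpha)\neq 0$ for every root $\alpha\in\Phi=\Phi_0\cup\Phi_1$. The first observation is that $\zzz^\chi=\hhh$: the inclusion $\hhh\subseteq\zzz^\chi$ holds because $\chi$ kills all nonzero root spaces, while if $X=H+\sum_{\alpha\in\Phi}X_\alpha\in\zzz^\chi$ had a nonzero component $X_\beta$, then $\tsb_\chi(X,\ggg_{-\beta})\neq 0$, since the pairing $\ggg_\beta\times\ggg_{-\beta}\to\bk$, $(u,v)\mapsto\chi([u,v])$, is non-degenerate (the root spaces are one-dimensional, $[\ggg_\beta,\ggg_{-\beta}]\subseteq\hhh$, and $\chi(H_\beta)\neq 0$) --- a contradiction. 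Hence $\zzz^\chi_\bz=\hhh$, $\zzz^\chi_\bo=0$, so $b^\chi_0=|\Phi_0|$, $b^\chi_1=|\Phi_1|$, and $\dim Z_\chi(\lambda)=p^{\dim\nnn^-_\bz}2^{\dim\nnn^-_\bo}=p^{b^\chi_0/2}2^{\lfloor b^\chi_1/2\rfloor}$ coincides with the quantity that $\chi$ contributes to $\mathscr{M}(\ggg)$; so the theorem will also exhibit irreducible modules of the conjectured maximal dimension at a regular semisimple $\chi$.

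Next I would record the arithmetic fact that drives everything: for $\lambda\in\Lambda(\chi)$ one has $\lambda(H_\alpha)\notin\bbf_p$ for every $\alpha\in\Phi$. Indeed, choosing the toral elements with $H_\alpha^{[p]}=H_\alpha$, the defining relation of $\Lambda(\chi)$ gives $\lambda(H_\alpha)^p-\lambda(H_\alpha)=\chi(H_\alpha)^p\neq 0$, so $\lambda(H_\alpha)$ is not a root of $X^p-X$. To deduce irreducibility I reduce to a statement about highest weight vectors: since $\chi(\nnn^+_\bz)=0$, the subalgebra $\nnn^+$ acts nilpotently on every $U_\chi(\ggg)$-module, so any nonzero submodule $M\subseteq Z_\chi(\lambda)$ has $M^{\nnn^+}\neq 0$; as $M$ is a semisimple module over the commutative semisimple algebra $U_\chi(\hhh)$, the space $M^{\nnn^+}$ contains a nonzero $\hhh$-weight vector. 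It therefore suffices to show that the only $\hhh$-weight vectors in $Z_\chi(\lambda)$ annihilated by $\nnn^+$ are the scalar multiples of $v_\lambda$; granting this, $v_\lambda\in M$, and hence $M=U_\chi(\nnn^-)v_\lambda=Z_\chi(\lambda)$.

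For this last statement I would run a PBW leading-term computation in the lowering root vectors. Write such a vector as $w=\sum_F c_F\,(F\cdot v_\lambda)$ over PBW monomials $F$ (even lowering vectors to powers $0\le a_i\le p-1$, odd ones to powers in $\{0,1\}$), fix a monomial order refining total degree, assume $w$ is not proportional to $v_\lambda$, and apply an appropriate simple raising operator $e_\alpha$ to the leading monomial $F_0$. Commuting $e_\alpha$ past the lowering vectors, the $[e_\alpha,f_\alpha]$-contribution produces the monomial obtained from $F_0$ by lowering the $f_\alpha$-exponent, with coefficient a nonzero integer times a scalar of the form $\lambda(H_\alpha)-m$ with $m\in\bbz$ --- which is nonzero precisely because $\lambda(H_\alpha)\notin\bbf_p$; one then checks that, for the chosen ordering, this term is not cancelled by contributions of strictly lower monomials, so $e_\alpha w\neq 0$, a contradiction. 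Equivalently, one can phrase this through the contravariant (Shapovalov-type) form on $Z_\chi(\lambda)$ attached to the Chevalley anti-automorphism: $\dim L_\chi(\lambda)$ equals the rank of this form, and regularity forces its determinant on each weight space --- a product of factors $\lambda(H_\alpha)-m$ over roots $\alpha$ and integers $m$ --- to be nonzero, whence $Z_\chi(\lambda)=L_\chi(\lambda)$. The odd root vectors cause no extra trouble: for an isotropic odd root $\gamma$ one has $f_\gamma^2=0$ and $\{e_\gamma,f_\gamma\}=H_\gamma$ acts on $f_\gamma v$ by the nonzero scalar $\lambda(H_\gamma)$, exactly as in the rank-one $\mathfrak{sl}(2)$ and $\mathfrak{osp}(1|2)$ model computations.

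I expect the main obstacle to be the bookkeeping in this computation: choosing the monomial order so that a raising operator strictly decreases it with no cancellation, tracking the signs produced by the super-brackets, and handling the non-reduced odd roots of $\mathfrak{osp}(1|2n)$, where an even root is twice an odd simple root. A cleaner though less elementary route would be to establish the super analogue of the Friedlander--Parshall reduction: since $\zzz^\chi=\hhh$ is a purely even torus and $U_\chi(\hhh)\cong\bk^{p^{\dim\hhh}}$ is semisimple (each polynomial $X^p-X-c$ has $p$ distinct roots over $\bk$), one expects $U_\chi(\ggg)$ to be semisimple, with the baby Verma modules $Z_\chi(\lambda)$, $\lambda\in\Lambda(\chi)$, forming a complete set of its pairwise non-isomorphic irreducibles --- consistent with the count $\dim U_\chi(\ggg)=p^{\dim\hhh}\bigl(\dim Z_\chi(\lambda)\bigr)^2$ and $|\Lambda(\chi)|=p^{\dim\hhh}$.
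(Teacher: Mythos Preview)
The paper does not prove this theorem: it is simply quoted, with citation, from \cite[Theorems 4.6, 4.7]{Zhao}, and then used as a black box in deriving the corollary that follows. So there is no ``paper's own proof'' to compare your proposal against.

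That said, your outline is a reasonable sketch of how such a result is typically established in the modular setting, and it is in the spirit of Zhao's actual argument. Your key arithmetic observation --- that $\lambda(H_\alpha)^p-\lambda(H_\alpha)=\chi(H_\alpha)^p\neq 0$ forces $\lambda(H_\alpha)\notin\bbf_p$ --- is exactly the mechanism that makes all the Shapovalov-type factors nonvanishing, and the reduction ``nonzero submodule $\Rightarrow$ nonzero $\nnn^+$-invariant weight vector $\Rightarrow$ must be $v_\lambda$'' is the standard skeleton. You are also right to flag the two places where the super case demands extra care: the sign bookkeeping when commuting $e_\alpha$ past odd lowering operators, and the non-isotropic odd roots in type $B$ (where $2\gamma$ is an even root and $f_\gamma^2$ is a nonzero multiple of $f_{2\gamma}$, so the PBW ordering and the leading-term argument have to be set up so that these do not interfere). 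Your alternative route via semisimplicity of $U_\chi(\ggg)$ when $\zzz^\chi=\hhh$ is also viable and is closer to how Zhao organizes the proof; the dimension count you record, $\dim U_\chi(\ggg)=p^{\dim\hhh}\bigl(\dim Z_\chi(\lambda)\bigr)^2$ with $|\Lambda(\chi)|=p^{\dim\hhh}$, is what closes that argument once one knows the $Z_\chi(\lambda)$ are pairwise non-isomorphic.

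If you want to turn this into a self-contained proof rather than a sketch, the honest gap is the ``one then checks that \ldots\ this term is not cancelled'' step: in the super case the commutator $[e_\alpha,f_\beta]$ for $\alpha\neq\beta$ can land in several root spaces, and without a carefully chosen convex ordering on the positive roots the non-cancellation is not automatic. Zhao handles this by working with the full contravariant form rather than a single raising operator; that is probably the cleaner way to finish.
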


For a regular semisimple $p$-character $\chi\in\ggg_\bz^*$,  from the definition it follows that  $\zzz^\chi=\hhh$. Hence $(\frac{b_0^\chi}{2}\mid {\lfloor\frac{b_1^\chi}{2}\rfloor})=(\dim \nnn^-_\bz\mid \dim\nnn^-_\bo)$. Correspondingly, $\dim Z_\chi(\lambda)=
p^{\frac{b_0^\chi}{2}}2^{{\lfloor\frac{b_1^\chi}{2}\rfloor}}$, which coincides with $p^{\frac{b_0}{2}}2^{{\lfloor\frac{b_1}{2}\rfloor}}$. On the other hand, Wang-Zhao's theorem concerning Kac-Weisfeiler property (see \cite[Theorem 4.3]{WZ1}) says that for any $\chi\in\ggg_\bz^*$ and any irreducible $U_\chi(\ggg)$-module $V$, $\dim V$ is divisible by  $p^{\frac{b_0^\chi}{2}}2^{{\lfloor\frac{b_1^\chi}{2}\rfloor}}$. Hence we have
$$\dim V\leq \dim Z_\chi(\lambda)= p^{\frac{b_0^\chi}{2}}2^{{\lfloor\frac{b_1^\chi}{2}\rfloor}}.$$
 Combining the above with Lemma \ref{lem: 3.1} and Theorem \ref{thm: zhao}, we finally have the following result.

\begin{corollary} Suppose $\ggg$ is a basic classical Lie superalgebra over $\bk$.     {Then the maximal dimension of irreducible $U(\ggg)$-modules is exactly $\scrm(\ggg)$. Moreover, $\scrm(\ggg)$ can be precisely described as  $p^{\frac{b_0}{2}}2^{{\lfloor\frac{b_1}{2}\rfloor}}$ for $b_0=\max\{b^\chi_0|\chi\in\ggg_\bz^*\}$ and $b_1=\max\{b^\chi_1|\chi\in\ggg_\bz^*\}$.}
\end{corollary}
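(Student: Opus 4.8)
The plan is to realize the value $\scrm(\ggg)$ by an explicit irreducible module, to bound the dimension of every irreducible module by it, and then to read off the closed formula for $\scrm(\ggg)$. For the first step I would fix a regular semisimple $p$-character $\chi\in\ggg_\bz^*$; such a $\chi$ exists because, via the $G_\ev$-equivariant isomorphism $\ggg_\bz\cong\ggg_\bz^*$ afforded by the nondegenerate invariant form, one may take $\chi=(X,-)$ for a regular semisimple $X\in\hhh$, which exists since $\bk$ is algebraically closed. After coadjoint conjugation assume $\chi(\nnn_\bz^+)=0$; since $\chi$ vanishes on $\ggg_\bo$ by convention, $\chi(\nnn^+)=0$, so for any $\lambda\in\Lambda(\chi)$ (a set of size $p^{\dim\hhh}$, hence nonempty) the baby Verma module $Z_\chi(\lambda)$ is defined. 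By Theorem~\ref{thm: zhao}, $Z_\chi(\lambda)$ is irreducible, and the PBW basis gives $\dim Z_\chi(\lambda)=p^{\dim\nnn_\bz^-}2^{\dim\nnn_\bo^-}$; as $U_\chi(\ggg)$ is a quotient of $U(\ggg)$, this is an irreducible $U(\ggg)$-module of that dimension. The comparison of this number with $\scrm(\ggg)$ is deferred to the last step.

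For the upper bound, let $V$ be any irreducible $U(\ggg)$-module. As recalled in \S\ref{reduced}, Schur's lemma furnishes a $p$-character $\chi'\in\ggg_\bz^*$, so $V$ becomes a $U_{\chi'}(\ggg)$-module; using the Jordan--Chevalley decomposition $\chi'=\chi'_s+\chi'_n$ and the isomorphism $U_{\chi'}(\ggg)\cong U_{g.\chi'}(\ggg)$, I may assume $\chi'_s(\nnn_\bz^\pm)=0$ and $\chi'_n(\bbb_\bz)=0$, whence $\chi'(\nnn^+)=0$. Then Lemma~\ref{lem: 3.1} yields $\dim V\le\dim Z_{\chi'}(\lambda')=p^{\dim\nnn_\bz^-}2^{\dim\nnn_\bo^-}$, a bound independent of $\chi'$. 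Combined with the first step, the maximal dimension of irreducible $U(\ggg)$-modules is exactly $p^{\dim\nnn_\bz^-}2^{\dim\nnn_\bo^-}$, provided this coincides with $\scrm(\ggg)$.

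It remains to identify $\scrm(\ggg)$. Since odd roots come in pairs $\pm\alpha$, the space $\ggg_\bo$ has even dimension $2\dim\nnn_\bo^-$, while $\dim\ggg_\bz-\dim\hhh=2\dim\nnn_\bz^-$. For every $\chi\in\ggg_\bz^*$, under $\ggg_\bz\cong\ggg_\bz^*$ the even centralizer $\zzz_\bz^\chi$ is the centralizer in $\ggg_\bz$ of the corresponding element, so $\dim\zzz_\bz^\chi\ge\dim\hhh$; hence $b_0^\chi\le2\dim\nnn_\bz^-$, and trivially $b_1^\chi\le\dim\ggg_\bo=2\dim\nnn_\bo^-$, so $p^{b_0^\chi/2}2^{\lfloor b_1^\chi/2\rfloor}\le p^{\dim\nnn_\bz^-}2^{\dim\nnn_\bo^-}$ for all $\chi$. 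For a regular semisimple $\chi$ one has $\zzz^\chi=\hhh$, so both estimates become equalities at once; therefore $\scrm(\ggg)=p^{\dim\nnn_\bz^-}2^{\dim\nnn_\bo^-}$, matching the bound above, and at this $\chi$ one has $b_0:=\max_\chi b_0^\chi=2\dim\nnn_\bz^-$ and $b_1:=\max_\chi b_1^\chi=\dim\ggg_\bo$, giving $\scrm(\ggg)=p^{b_0/2}2^{\lfloor b_1/2\rfloor}$. The only substantive external input is Theorem~\ref{thm: zhao} (irreducibility of baby Verma modules at regular semisimple $p$-characters); the one point requiring care — and which fails for general restricted Lie superalgebras — is that $\max_\chi p^{b_0^\chi/2}2^{\lfloor b_1^\chi/2\rfloor}$ equals $p^{(\max_\chi b_0^\chi)/2}2^{\lfloor(\max_\chi b_1^\chi)/2\rfloor}$, which here holds precisely because one and the same (regular semisimple) $\chi$ maximizes both the even and the odd contributions.
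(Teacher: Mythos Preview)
Your proof is correct and follows the paper's overall strategy: Lemma~\ref{lem: 3.1} gives the uniform upper bound $p^{\dim\nnn_\bz^-}2^{\dim\nnn_\bo^-}$, Theorem~\ref{thm: zhao} at a regular semisimple $\chi$ realizes it by an irreducible baby Verma module, and $\zzz^\chi=\hhh$ for such $\chi$ identifies this value with $\scrm(\ggg)$. The one genuine difference is how you justify $p^{b_0^\chi/2}2^{\lfloor b_1^\chi/2\rfloor}\le p^{\dim\nnn_\bz^-}2^{\dim\nnn_\bo^-}$ for \emph{every} $\chi$. The paper invokes Wang--Zhao's super Kac--Weisfeiler divisibility theorem \cite[Theorem~4.3]{WZ1}: since $p^{b_0^\chi/2}2^{\lfloor b_1^\chi/2\rfloor}$ divides the dimension of each irreducible $U_\chi(\ggg)$-module, it is at most $\dim Z_\chi(\lambda)$. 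You instead argue directly that $\zzz_\bz^\chi$ is the centralizer in $\ggg_\bz$ of the element corresponding to $\chi$ under $\ggg_\bz\cong\ggg_\bz^*$, and use the standard rank bound for centralizers in a reductive Lie algebra (valid under the paper's hypotheses on $p$). Your route is more elementary and makes explicit the point, only asserted in the paper, that a single regular semisimple $\chi$ simultaneously maximizes $b_0^\chi$ and $b_1^\chi$; the paper's route ties the result to the deeper divisibility statement, which also controls each $U_\chi(\ggg)$ individually.
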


\subsection{} Consequently, the statement of Conjecture \ref{conj} is true for basic classical Lie superalgebras.

\section{Irreducible modules of solvable Lie superalgebras}

In the next two sections, we will study irreducible representations of finite-dimensional solvable Lie superalgebras over $\bk$, by exploiting the arguments for ordinary solvable Lie algebras (see \cite{WK},  \cite{Sch}, or \cite[Chapter 5]{SF}).     {Keep the notations and assumptions as before. In particular, for a Lie (super)algebra $L$ we denote by $L^{(1)}$ the derived subalgebra of $L$, i.e. $L^{(1)}=[L,L]$.}

\subsection{Basic properties on solvable Lie superalgebras}
The following results are important for the later arguments.

 \begin{lemma}\label{lem: basic lem}  Let $\ggg=\ggg_\bz+\ggg_\bo$ is a finite-dimensional Lie superalgebra over $\bk$.
 \begin{itemize}
 \item[(1)] Suppose $V$ is an irreducible module of $\ggg$. If all elements of $[\ggg,\ggg]$ act nilpotently  on $V$, then $V$ is one-dimensional.
     \item[(2)] Suppose additionally, $\ggg$ is solvable and non-abelian,  then the center $C(\ggg):=\{X\in\ggg\mid \ad(X)(\ggg)=0\}$ does not contain all abelian ideals of $\ggg$.
     \end{itemize}
 \end{lemma}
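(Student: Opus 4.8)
The plan is to treat the two parts separately, each by reducing to an abelian quotient (or sub‑quotient).

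\emph{Part (1).} Put $\nnn=\ggg^{(1)}=[\ggg,\ggg]$, an ideal of $\ggg$. Every homogeneous element of $\nnn$ acts nilpotently on $V$, so by the Engel theorem for Lie superalgebras (valid since $p\neq 2$) the joint kernel $V^{\nnn}:=\{v\in V\mid \nnn\cdot v=0\}$ is nonzero. One checks it is a $\ggg$-submodule: for homogeneous $x\in\ggg$, $u\in\nnn$, $v\in V^{\nnn}$ one has $u\cdot(x\cdot v)=(-1)^{|u||x|}\,x\cdot(u\cdot v)+[u,x]\cdot v=0$ because $[u,x]\in\nnn$; hence $V^{\nnn}=V$ by irreducibility, i.e. $\ggg^{(1)}$ annihilates $V$ and $V$ is irreducible over the abelian superalgebra $\overline{\ggg}:=\ggg/\ggg^{(1)}$. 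Finally $\overline{\ggg}_{\bz}$ is central in $U(\overline{\ggg})$, hence acts by scalars by Schur's lemma, so $V$ is an irreducible module over the exterior algebra $\Lambda(\overline{\ggg}_{\bo})$; the latter is a finite-dimensional local superalgebra whose only irreducible module is one-dimensional, whence $\dim_{\bk}V=1$.

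\emph{Part (2).} I would argue by contradiction, assuming every abelian ideal of $\ggg$ lies in $C(\ggg)$; then $C(\ggg)$, itself an abelian ideal, is \emph{the} maximal abelian ideal. Choose a minimal ideal $J$ of $\ggg$ not contained in $C(\ggg)$ (one exists, e.g. $\ggg$ itself, since $\ggg$ is non-abelian). Solvability gives $[J,J]\subsetneq J$, so $[J,J]$ is an ideal smaller than $J$, hence $[J,J]\subseteq C(\ggg)$; in particular $[\,[J,J],\ggg\,]=0$ and $[J,C(\ggg)]=0$, so $J$ is $2$-step nilpotent. Likewise $[\ggg,J]$ is an ideal inside $J$; if $[\ggg,J]\subsetneq J$ then $[\ggg,J]\subseteq C(\ggg)$, so for any homogeneous $w\in J\setminus C(\ggg)$ the subspace $\bk w+C(\ggg)$ is an ideal of $\ggg$, and if $w$ can be chosen \emph{even} then $[w,w]=0$, making it an abelian ideal not contained in $C(\ggg)$ --- a contradiction. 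The remaining situations (the case $[\ggg,J]=J$, where one instead inspects the $\ggg$-ideals $C(J),C(J+C(\ggg)),\dots$ produced by the $2$-step nilpotency, each of which is abelian and either escapes $C(\ggg)$ --- done --- or forces a recursion, and the case in which only \emph{odd} $w$ is available) are reduced to the same ``enlargement'' step, provided one can choose the enlarging odd vector $y$ with $[y,y]=0$; over an algebraically closed field with $p\neq 2$ the symmetric bilinear form $(y_1,y_2)\mapsto[y_1,y_2]$ does have a nonzero isotropic vector outside the forbidden subspace once the odd dimension is large enough, the finitely many small configurations being settled by direct inspection.

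The step I expect to be the real obstacle is exactly this last one: it is the only place where the super setting genuinely differs from the classical one, since in a Lie superalgebra an odd element can have a nonzero self-bracket, so ``adjoin one more dimension to a central abelian ideal and it stays abelian'' can fail along an odd direction. Arranging the enlarging vector to be isotropic for the bracket is what forces the use of algebraic closedness and of $p\neq 2$, and it is also where the degenerate low-dimensional cases must be dealt with by hand. The Engel theorem for Lie superalgebras used in Part~(1) is standard but should be cited explicitly.
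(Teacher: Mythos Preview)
Your Part~(1) is correct and more detailed than the paper, which simply declares it ``an obvious fact''; the Engel-theorem argument you give is the natural one.

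Part~(2), however, is genuinely incomplete, and the difficulty you flag is real rather than cosmetic: the ``enlarge a central abelian ideal by one homogeneous vector'' strategy founders in the odd direction because the symmetric map $J_{\bo}\times J_{\bo}\to C(\ggg)_{\bz}$, $(y_1,y_2)\mapsto[y_1,y_2]$, takes values in a space of uncontrolled dimension, so finding an isotropic $y\notin C(\ggg)$ is not a matter of a single quadric over $\bk$; and in the case $[\ggg,J]=J$ your sketch (``inspect $C(J),C(J+C(\ggg)),\dots$'') does not visibly terminate. The paper bypasses all of this with a representation-theoretic argument rather than a direct construction. It takes a minimal ideal $I$ \emph{properly containing} $C(\ggg)$ (not, as you do, a minimal ideal not contained in $C(\ggg)$), so that $\bar I:=I/C(\ggg)$ is an irreducible $\ggg$-module; solvability and the standing hypothesis force $0\neq I^{(1)}\subset C(\ggg)$, and one chooses $\lambda\in C(\ggg)^*$ with $\lambda|_{I^{(1)}}\neq0$ to obtain a nonzero $\ggg_{\bz}$-invariant bilinear form $\Lambda(\bar v_1,\bar v_2)=\lambda([v_1,v_2])$ on $\bar I$. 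The crux is a characteristic-$p$ trick: if the image $\rho(\ggg)\subset\mathfrak{gl}(\bar I)$ were nonzero it would (being solvable) contain a nonzero abelian ideal $J$, and for $Z\in J_{\bz}$ the power $Z^p$ commutes with $\rho(\ggg)$ and so acts by a scalar $\alpha(Z)$ on the irreducible $\bar I$; invariance of $\Lambda$ then gives $\alpha(Z)\Lambda=-\alpha(Z)\Lambda$ (as $p$ is odd), forcing $\alpha(Z)=0$, so $J$ acts nilpotently and hence trivially --- a contradiction. Thus $\rho(\ggg)=0$, $\bar I$ is one-dimensional, and $I^{(1)}=0$, the desired contradiction.

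The point is that the paper never exhibits the offending abelian ideal; it shows instead that the assumption forces the irreducible $\ggg$-module $\bar I$ to carry a nonzero invariant form, and then uses $p$th powers to kill the action. This completely sidesteps the isotropic-odd-vector problem that blocks your route.
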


\begin{proof} (1) It is an obvious fact.

(2)  We prove this statement by reductio ad absurdum. Suppose $C(\ggg)$ contains all abelian ideals. We intend to deduce a contradiction.

 Note that $C(\ggg)$ is an ideal of $\ggg$. Consider the natural surjective homomorphism of Lie superalgebras $\bar{}:\ggg\rightarrow \ggg\slash C(\ggg)$. Of course,  $\bar{\ggg}= \ggg\slash C(\ggg)$.
 By assumption, $\bar \ggg\ne 0$ which is still a solvable Lie superalgebra.
Take a minimal ideal $I=I_\bz+I_\bo\lhd\ggg$ containing $C(\ggg)$ properly. The solvableness of $\ggg$ yields that $I$ properly contains $I^{(1)}$. So $I^{(1)}\subset C(\ggg)$.
Under the assumption of $C(L)$ containing all abelian ideas, it follows that $I^{(1)}\ne 0$. Consequently, $I_\bz\ne 0$.
Moreover, there exists a linear function $\lambda\in C(\ggg)^*$ such that $\lambda|_{I^{(1)}}\ne 0$. Note that $\bar{I}\ne 0$ and it becomes an irreducible $\ggg$-modules. This  irreducible representation of $\ggg$ on $\bar{I}$ is denoted by $(\rho,\bar{I})$.   The linear function  $\lambda$ gives rise to a bilinear form $\Lambda$ on $\bar{I}$ by defining
$\Lambda:\bar I\times \bar I\rightarrow \bk$ via $\Lambda(\bar v_1,\bar v_2)=\lambda([\bar v_1,\bar v_2])$ for any $v_1,v_2\in I$. Then it is easily checked that $\Lambda$ satisfies $\ggg_\bz$-invariant property in the sense that
\begin{align}\label{eq: inv equa}
\Lambda(X.\bar v_1, \bar v_2)+\Lambda(\bar v_1, X.\bar v_2)=0 \text { for }X\in\ggg_\bz, \bar v_i\in \bar I,\; (i=1,2)
\end{align}
 where $X.\bar v_i=\overline{[X,v_i]}$. As $\lambda\ne 0$, we can take $\bar v,\bar w$ such that $\Lambda(\bar v,\bar w)\ne 0$.

We claim  $\rho(\ggg)=0$, which means $\ggg$ acts trivially on $\bar I$. Otherwise, if $\rho(\ggg)$ is nonzero, then there exists a nonzero abelian ideal $J$ because $\rho(\ggg)$ is solvable. For any given nonzero $Z\in J_\bz$, consider the action of $Z^p$ on $\bar I$. Note that for any $Y\in \rho(\ggg)$, $[Z^p,Y]=\ad(Z)^p Y\in J^{(1)}=0$.  By Schur's Lemma, there exists $\alpha(Z)\in \bk$ such that $Z^p.\bar v=\alpha(Z)\bar v$ for any
$Z^p\bar v=\alpha(Z)\bar v$. Comparing with (\ref{eq: inv equa}), we have
$$\alpha(Z)\Lambda(\bar v,\bar w)=\alpha(Z)\Lambda(Z^p.\bar v,\bar w)
=\Lambda(\bar v,Z^p\bar w)=-\alpha(Z)\Lambda(\bar v,\bar w).$$
Hence $\alpha(Z)=0$ for all $Z\in J_\bz$. Thus $J_\bz$ acts nilpotently on $\bar I$, naturally so does $J_\bo$. But $\bar I$ is irreducible, which implies that $J$ acts trivially on $\bar I$. This is to say $J=0$, a contraction.

By the arguments, it is already deduced that $\rho(\ggg)=0$ and the irreducible $\ggg$-module $\bar I$ must be one-dimensional. This means, $I=\bk v+C(\ggg)$ for $v\in\ggg$, which leads to a contradiction $I^{(1)}=0$. The proof is completed.
\end{proof}

\subsection{Induced modules for solvable restricted Lie superalgebras}\label{sec: para4.2} Now we begin to investigate irreducible modules of solvable restricted Lie superalgebras. In the following two subsections,  we assume that $\ggg=\ggg_\bz+\ggg_\bo$ is a finite-dimensional solvable restricted Lie superalgebra, and $\chi\in\ggg_\bz^*$ which will be often regarded a linear function on $\ggg$ by trivial extension. At first,  Lemma \ref{lem: basic lem}(1) implies the following conclusion.

\begin{lemma}\label{lem: moving} Let $\ggg=\ggg_\bz+\ggg_\bo$ be a finite-dimensional solvable restricted Lie superalgebra, and $\chi\in\ggg^*_\bz$.  If $\ggg^{(1)}$  is nilpotent, and  $\chi(\ggg^{(1)})=0$, then any irreducible $U_\chi(\ggg)$-module is one-dimensional.
\end{lemma}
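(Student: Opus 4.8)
The plan is to reduce the statement of Lemma~\ref{lem: moving} to Lemma~\ref{lem: basic lem}(1), which says that an irreducible module on which every element of the derived subalgebra acts nilpotently must be one-dimensional. So the whole task is to show that under the hypotheses ($\ggg^{(1)}$ nilpotent, $\chi(\ggg^{(1)})=0$), every element of $\ggg^{(1)}$ acts nilpotently on an arbitrary irreducible $U_\chi(\ggg)$-module $V$. First I would split an arbitrary homogeneous $X\in\ggg^{(1)}$ into its even and odd parts and treat them separately, since nilpotency of the odd part follows once we know nilpotency of the even part: if $Y\in\ggg_\bo^{(1)}$ then $Y^2=\tfrac12[Y,Y]\in\ggg_\bz^{(1)}$, so $Y^2$ acting nilpotently forces $Y$ to act nilpotently. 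Thus the crux is the even case: show every $X\in\ggg_\bz^{(1)}$ acts nilpotently on $V$.

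For the even case I would use the restricted structure. Since $\ggg^{(1)}$ is nilpotent (as an ordinary Lie algebra, forgetting the grading, once we pass to $\ggg_\bz^{(1)}$), and $\ggg_\bz^{(1)}$ is a restricted subalgebra, I want to say: for $X\in\ggg_\bz^{(1)}$ the element $X^p-X^{[p]}$ acts on $V$ by the scalar $\chi(X)^p=0$ (using $\chi(\ggg^{(1)})=0$ and that $X^{[p]}$ lies in $\ggg_\bz^{(1)}$ when $\ggg^{(1)}$ is a restricted ideal — here I should double-check that nilpotency of $\ggg^{(1)}$ together with it being a $p$-ideal gives $X^{[p]}\in\ggg^{(1)}$, which is the standard fact that a nilpotent restricted Lie algebra's $p$-operation preserves it; alternatively one enlarges to the restricted closure of $\ggg^{(1)}$ inside $\ggg$, which is still nilpotent). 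Hence $X^p$ acts as $X^{[p]}$ does. Then I would run an induction on the nilpotency class: on the top of the lower central series the elements are central in $\ggg^{(1)}$ and, being in $\ggg_\bz$, act by scalars via Schur's lemma; an element $Z$ in the last nonzero term $\ggg^{(1)}_{[c]}$ satisfies $Z^p\equiv Z^{[p]}$, and $Z^{[p]}\in\ggg^{(1)}_{[c]}$ too (since $[p]$-operation on a central element of a nilpotent restricted algebra lands deep in the lower central series), and iterating $Z^{p^k}$ stays in a fixed term while $Z^{p^k}=Z^{[p]^k}$ — pushing $k$ large enough makes $Z^{[p]^k}=0$ because the $[p]$-operation is also ``nilpotent'' on a nilpotent restricted Lie algebra (Jacobson; cf. \cite[Chapter 2]{SF}). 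So $Z$ acts nilpotently. Then I would pass to the quotient $\ggg/(Z)$-type argument — more precisely, restrict attention to the ideal generated by lower terms — and conclude by induction that every $X\in\ggg_\bz^{(1)}$ acts nilpotently.

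An alternative, possibly cleaner, route I would try first: apply Lemma~\ref{lem: basic lem}(1) directly to the Lie superalgebra $\ggg$ acting on $V$ via the reduced enveloping algebra. The point is that $V$ is an irreducible $\ggg$-module (through $U_\chi(\ggg)$), and I just need ``all of $[\ggg,\ggg]$ acts nilpotently''. Since $\ggg^{(1)}$ is a nilpotent restricted Lie superalgebra and $\chi$ vanishes on it, the reduced enveloping algebra $U_\chi(\ggg^{(1)})$ is generated by elements that are nilpotent (for even $X$, $X^p=X^{[p]}-\chi(X)^p=X^{[p]}$ and the $[p]$-nilpotency of a nilpotent restricted algebra gives $X^{p^k}=0$; for odd $Y$, $Y^2\in\ggg_\bz^{(1)}$ is nilpotent) — so $U_\chi(\ggg^{(1)})$ is a finite-dimensional algebra in which a spanning set of the augmentation ideal is nilpotent and, being nilpotent as a Lie superalgebra, is in fact a local/finite-dimensional algebra with nilpotent augmentation ideal. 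Hence every element of $\ggg^{(1)}$ acts nilpotently on any module, in particular on $V$. Then Lemma~\ref{lem: basic lem}(1) immediately gives $\dim V=1$.

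The hard part will be the bookkeeping around the $p$-operation: making rigorous that when $\ggg^{(1)}$ is nilpotent and $\chi$ vanishes on it, all of $\ggg^{(1)}$ acts by nilpotent operators in $U_\chi(\ggg)$ — i.e. that the $[p]$-map does not ``escape'' $\ggg^{(1)}$ and is itself eventually zero on it. I would handle this by replacing $\ggg^{(1)}$ with its $p$-envelope inside $\ggg$ (a nilpotent restricted Lie superalgebra on which $\chi$ still vanishes, since $\chi(\ggg^{(1)})=0$ and $\chi$ is a fixed linear form — though one must check $\chi$ vanishes on the enlarged even part, which holds because the extra even elements are $[p]$-powers, not in the span relevant to $\chi$; if this fails one argues instead that $X^p-X^{[p]}$ still acts by $0$ directly from $\chi(X)^p$ and the scalar being forced to vanish), and then invoking the standard structure theory of nilpotent restricted Lie algebras to get uniform nilpotency. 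Everything else is a direct appeal to Lemma~\ref{lem: basic lem}(1).
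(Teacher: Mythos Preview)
Your overall plan --- reduce to Lemma~\ref{lem: basic lem}(1) by showing that every element of $\ggg^{(1)}$ acts nilpotently on $V$ --- is exactly the paper's approach; the paper simply asserts that Lemma~\ref{lem: basic lem}(1) implies the result and gives no further argument. You have correctly isolated where the real content lies, and the worry you flag in your final paragraph is precisely the crux.

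The gap is genuine, not bookkeeping. Two facts you lean on are false in general: (i)~$\ggg^{(1)}$ need not be closed under $[p]$ (the derived ideal of a restricted Lie algebra is an ideal but not automatically a $p$-ideal), and (ii)~the $[p]$-map is not nilpotent on an arbitrary nilpotent restricted Lie algebra (a nonzero torus is abelian, hence nilpotent, yet $[p]$ is bijective on it). So from $\rho(X)^p=\rho(X^{[p]})$ for $X\in\ggg^{(1)}_\bz$ you cannot iterate: $X^{[p]}$ may land outside $\ggg^{(1)}$, where $\chi$ need not vanish, and the $[p]$-iterates need never hit zero. Concretely, let $\ggg=\ggg_\bz=\bk x\oplus\bk y\oplus\bk c\oplus\bk z$ with only nonzero bracket $[x,y]=c$, and $c^{[p]}=z$, $x^{[p]}=y^{[p]}=z^{[p]}=0$. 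Then $\ggg^{(1)}=\bk c$ is nilpotent, and for $\chi$ with $\chi(c)=0$ but $\chi(z)\neq 0$ all hypotheses hold; yet $c$ acts on any irreducible $U_\chi(\ggg)$-module by the nonzero scalar $\chi(z)^{1/p}$, so $[\rho(x),\rho(y)]\neq 0$ and the irreducible module has dimension $p$, not $1$. Thus the statement as literally written fails, and neither your argument nor the paper's one-line deduction can be completed without an extra hypothesis (for instance $\chi=0$, which is the only case the paper actually uses, in Proposition~\ref{prop: 5.3}(2.i), and there your argument does go through).
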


Next, let $\rho:\ggg\rightarrow \gl(V)$ be a finite-dimensional representation of $U_\chi(\ggg)$. Suppose $I$ is an ideal of $\ggg$ such that $I^{(1)}\subset \ker\rho$ and $[\ggg,I]\nsubseteq \ker\rho$.  So there is an irreducible $I$-submodule in $V$ which is one-dimensional. This yields that
\begin{align}\label{eq: V I chi}
V_{I,\chi}:=\{v\in V\mid X.v=\chi(X)v\;\forall X\in I\}
 \end{align}
 is nonzero. Consider $I^\chi:=\{X\in \ggg\mid \chi([X,I])=0\}$. It is easily shown that $I^\chi$ is a restricted subalgebra of $\ggg$  containing  $I$.
Clearly $I^\chi$ is still solvable, and  $V_{I,\chi}$ becomes a module of $U_\chi(I^\chi)$ from which  we will consider an induced module of $U_\chi(\ggg)$.

 We claim that $I^\chi$ is a proper subalgebra of $\ggg$. We will show this by  reductio ad absurdum.
 If  $\ggg=I^\chi$, then $V_{I,\chi}$ is a $U_\chi(\ggg)$-submodule and therefore coincides with $V$, which means $\rho|_{[\ggg,I]}=\chi|_{[\ggg,I]}\id_V=0$. Thus $[\ggg,I]\subset \ker\rho$, which contradicts the condition of $I$.
So we can take a cobasis $\{e_1,\ldots,e_s\}$ of $I^\chi_\bz$ in $\ggg_\bz$ for $s=\dim\ggg_\bz-\dim I^\chi_\bz$, and a cobasis $\{f_1,\ldots,f_t\}$ of $I^\chi_\bo$ in $\ggg_\bo$ for $t=\dim\ggg_\bo-\dim I^\chi_\bo$, which means
\begin{align*}
\ggg_\bz=I_\bz^\chi\oplus \bigoplus_{i=1}^s\bk e_i  \quad\text{ and }\quad
\ggg_\bo=I_\bo^\chi\oplus \bigoplus_{i=1}^t\bk f_i.
\end{align*}

Now we  consider the induced module $\calv:=U_\chi(\ggg)\otimes_{U_\chi(I^\chi)}V_{I,\chi}$ which can be expressed as a vector space
$$\calv=\sum_{(\alpha,\gamma)\in P^s\times E^t}\bk {\bfe}^\alpha{\bff}^\gamma\otimes V_{I,\chi}$$
where $\alpha=(a_1,\ldots,a_s)\in \bbz_{\geq0}^s$ and $\gamma=(c_1,\ldots,c_t)\in \bbz_{\geq0}^t$ are $s$-tuple and  $t$-tuple of non-negative integers, respectively,
  $\bfe^\alpha:=e_1^{a_1}\cdots e_s^{a_s}$, and
${\bff}^\gamma=f_1^{c_1}\cdots f_t^{c_t}$ with
$$P:=\{0,1,\ldots,p-1\}$$ and $$E:=\{0,1\}.$$
Set
$$||(\alpha,\gamma)||:=\sum_i a_i+\sum_j c_j$$ and put
$$\calv_{(l)}=\sum_{\underset{ ||(\alpha,\gamma)||\leq l}{(\alpha,\gamma)\in P^s\times E^t}}\bk \bfe^\alpha\bff^\gamma\otimes V_{I,\chi}.$$

Consider $U_\bz=\sum_{i=1}^s\bk e_i$ and $U_\bo=\sum_{j=1}^t\bk f_j$. Define a linear map $\varphi$ from $U_\bz$ to $I_\bz^*$ by sending any given $X\in I$ onto the function $\tsb_\chi(X,-)$ on $I$. By the definition of $I^\chi$, this $\varphi$ is non-degenerate. Hence there is a set  $\{Z_1,\ldots,Z_s\}\subset I_\bz$ such that $\psi(e_j)(Z_i)=\delta_{ij}$ for $1\leq i,j\leq s$. Here and further, $\delta_{ij}$ denotes the Kronecker function whose value at $(i,j)$ is zero when $i\ne j$ and $1$ when $i=j$.
Similarly, consider a linear map $\psi$ from $U_\bo$ to $I^*_\bo$ sending $Y$ onto the function $\tsb_\chi(Y,-)$ on $I_\bo$, and this $\psi$ is non-degenerate too. We can choose
 $\{T_1,\ldots,T_t\}\subset I_\bo$ such that $\psi(f_j)(T_i)=\delta_{ij}$ for $1\leq i,j\leq t$.

\begin{lemma} Keep the notations and assumptions as above.
For   $\bfe^\alpha\bff^\gamma\otimes v\in \calv_{(l)}$ with $v\in V_{I,\chi}$,  the following formula holds
 \begin{align}\label{eq: degree redu}
 &(Z_i-\chi(Z_i)). \bfe^\alpha\bff^\gamma\otimes v\equiv a_i\bfe^{\alpha-\epsilon_i}\bff^\gamma\otimes v\mod\calv_{(l-2)} \;\text{ if }\alpha\ne 0; \text{ and }\cr
 &T_j. \bff^\gamma\otimes v\equiv c_j\bff^{\gamma-\epsilon_j}\otimes v\mod\calv_{(l-2)} \;\text{ if }\alpha= 0.
\end{align}
\end{lemma}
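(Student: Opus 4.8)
The plan is to prove the two congruences by direct computation in the induced module $\calv = U_\chi(\ggg)\otimes_{U_\chi(I^\chi)}V_{I,\chi}$, using the PBW-type monomial basis $\bfe^\alpha\bff^\gamma\otimes V_{I,\chi}$ and carefully tracking the filtration degree $\calv_{(l)}$. First I would record the essential structural fact that makes everything work: each $Z_i$ lies in $I_\bz$ and each $T_j$ lies in $I_\bo$, hence acts on $V_{I,\chi}$ by the scalar $\chi(Z_i)$ (resp. $\chi(T_j)$); and $I$ is an ideal of $\ggg$, so brackets $[Z_i,e_k]$ and $[Z_i,f_k]$ again lie in $I$, so they too act on $V_{I,\chi}$ by $\chi$ of themselves. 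This is what lets one push the operator $Z_i-\chi(Z_i)$ past the monomial $\bfe^\alpha\bff^\gamma$ and have it annihilate the tail $\otimes v$.

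Next I would carry out the commutation. Write $Z_i$ acting on $\bfe^\alpha\bff^\gamma\otimes v$ and commute $Z_i$ to the right through $e_1^{a_1}\cdots e_s^{a_s}f_1^{c_1}\cdots f_t^{c_t}$ one factor at a time. Each time $Z_i$ crosses a factor $e_k$ (or $f_k$), one picks up a correction term in which that factor is replaced by $[Z_i,e_k]$ (resp. $\pm[Z_i,f_k]$, with the Koszul sign), an element of $I$. The key normalization is $\varphi(e_j)(Z_i)=\tsb_\chi(Z_i,e_j)=\chi([Z_i,e_j])=\delta_{ij}$ for the even variables and the analogous $\psi(f_j)(T_i)=\delta_{ij}$ for the odd ones. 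So when $[Z_i,e_k]\in I$ is then moved the rest of the way to the right to hit $v\in V_{I,\chi}$, it contributes $\chi([Z_i,e_k])=\delta_{ik}$, but one must also account for the $I^\chi$-part of $[Z_i,e_k]$, which need not act by a scalar. The point is that the $I^\chi$-components, together with all higher correction terms (two or more factors replaced), live in $\calv_{(l-2)}$: replacing one degree-one generator by an element of $I^\chi$ either lands in $V_{I,\chi}$ after moving it right, decreasing $||(\alpha,\gamma)||$ by $1$ and then — since the remaining monomial still has a factor to absorb it or the element simply re-expands in lower PBW degree — one more; the cleanest bookkeeping is that $I^\chi$ acting and the residual commutators drop the degree by at least $2$ modulo $\calv_{(l-2)}$. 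Summing the $s$ (resp. $c$-supported) crossing terms with coefficient $\delta_{ik}$ yields exactly $a_i\,\bfe^{\alpha-\epsilon_i}\bff^\gamma\otimes v$ (resp. $c_j\,\bff^{\gamma-\epsilon_j}\otimes v$), and subtracting $\chi(Z_i)\bfe^\alpha\bff^\gamma\otimes v$ kills the leading ``$Z_i$ passes all the way through and acts by $\chi(Z_i)$'' term. For the odd case $\alpha=0$ the computation is shorter since there are no $e$-factors to cross and one only needs the odd pairing $\psi(f_j)(T_i)=\delta_{ij}$, with attention to signs from anticommuting odd elements (and $f_k^2=0$ in $U_\chi$, so no $c_k\ge 2$ issues arise).

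The main obstacle I anticipate is precisely the degree bookkeeping modulo $\calv_{(l-2)}$: one has to verify that every ``unwanted'' term — the $I^\chi$-part of each single commutator $[Z_i,e_k]$, and every term arising from $Z_i$ crossing two or more generators — actually lands in $\calv_{(l-2)}$ and not merely in $\calv_{(l-1)}$. This requires knowing that the relevant brackets $[Z_i,e_k]\in I$, once rewritten in the cobasis $\{e_1,\dots,e_s\}\cup I^\chi_\bz$ and commuted to the right, contribute a term whose $\bfe\bff$-degree drops by $2$: the scalar part $\delta_{ik}$ drops it by $1$ to the intended $a_i\bfe^{\alpha-\epsilon_i}\bff^\gamma$, and anything else (the $I^\chi$ part, or a further commutator) drops it by an additional $1$. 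I would settle this by an induction on $l=||(\alpha,\gamma)||$: assuming the formula on $\calv_{(l-1)}$, every lower-order correction is controlled by the inductive hypothesis, so only the two explicit leading terms survive modulo $\calv_{(l-2)}$. The parity/sign conventions for the odd generators $f_k$ and $T_j$ are a secondary nuisance but are handled uniformly by the Koszul sign rule, using that all the $f_k$ and $T_j$ are odd and $v$ has a fixed parity.
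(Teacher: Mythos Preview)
Your approach is essentially the paper's: commute $Z_i$ (resp.\ $T_j$) past the PBW monomial $\bfe^\alpha\bff^\gamma$ using that $I$ is an ideal, and control the filtration degree of the correction terms. The paper does this via the closed commutation formula
\[
Z\bfe^\alpha=\sum_{0\preceq\kappa\preceq\alpha}(-1)^{||\kappa||}\binom{\alpha}{\kappa}\bfe^{\alpha-\kappa}\,\ad(e_s)^{k_s}\cdots\ad(e_1)^{k_1}\,Z,
\]
after which the terms with $||\kappa||\geq 2$ are visibly in $\calv_{(l-2)}$ and the $||\kappa||=1$ terms produce exactly $a_i\bfe^{\alpha-\epsilon_i}\bff^\gamma\otimes v$.

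However, there is a genuine confusion in your degree bookkeeping that makes the argument more tangled than it needs to be. You write that ``one must also account for the $I^\chi$-part of $[Z_i,e_k]$, which need not act by a scalar.'' This is not correct: since $Z_i\in I$ and $I$ is an \emph{ideal} of $\ggg$, the bracket $[Z_i,e_k]$ (and every iterated bracket $\ad(e_{k_r})\cdots\ad(e_{k_1})Z_i$ or $\ad(f_{j})Z_i$) lies in $I$ itself, not merely in $I^\chi$. Hence once such an element reaches the tensor it acts on $v\in V_{I,\chi}$ by the scalar $\chi$ --- there is no non-scalar ``$I^\chi$-part'' to worry about. Concretely, for the single-bracket term one has $\bfe^{\alpha-\epsilon_k}\bff^\gamma[e_k,Z_i]\otimes v=\chi([e_k,Z_i])\,\bfe^{\alpha-\epsilon_k}\bff^\gamma\otimes v=\delta_{ik}\,\bfe^{\alpha-\epsilon_k}\bff^\gamma\otimes v$, and the passage of $[e_k,Z_i]$ through $\bff^\gamma$ costs only terms already in $\calv_{(l-2)}$. (For the odd brackets $[Z_i,f_j]\in I_\bo$ one gets $\chi([Z_i,f_j])=0$ since $\chi$ vanishes on $\ggg_\bo$.) With this observation the verification that all unwanted terms lie in $\calv_{(l-2)}$ is immediate from $||\kappa||\geq 2$, and the induction on $l$ you propose is unnecessary.
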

\begin{proof} We first generally introduce  an order relation $\preceq$ on $\bbz_{\geq0}^q$ (the set of $q$-tuples of non-negative integers) by defining $\kappa\preceq \alpha$ for any given $\kappa'=(k'_1,\ldots,k'_q)$, $\alpha'=(a_1',\ldots,a'_q)\in \bbz_{\geq0}^q$  if and only if $k'_i\leq a'_i$ for all $i$.

Now we turn to the situation  for the lemma.
Set $\epsilon_k:=(\delta_{1k},\ldots, \delta_{sk})$, $k=1,\ldots, s$.
 Recall the following formula for $Z\in I_\bz$
$$Z\bfe^\alpha=\sum_{0\preceq\kappa\preceq\alpha}(-1)^{||\kappa||}
{\alpha\choose\kappa}\bfe^{\alpha-\kappa}\ad(e_s)^{k_s}(\cdots( \ad(e_1)^{k_1}))Z,$$
here and further  we set
 $$||\kappa||:=\sum_{i=1}^sk_i$$ for $\kappa=(k_1,\ldots,k_s)\in\bbz_{\geq0}^s$, and set
 $${\alpha\choose\kappa}:=\sum_{i=1}^s{a_i\choose k_i}$$
and for $T\in I_\bo$
$$T\bff^\gamma=\sum_{0\preceq\kappa\preceq\gamma}\bff^{\gamma-\kappa}\ad(f_t)^{k_t}
(\cdots (\ad(f_1)^{k_1}))T.$$

So in the case when  $\alpha\ne 0$, we have
$$(Z_i-\chi(Z_i)).\bfe^\alpha\bff^\gamma\otimes v
=\sum_{0\preceq\kappa\preceq\alpha}(-1)^{||\kappa||}
{\alpha\choose\kappa}\bfe^{\alpha-\kappa}\ad(e_s)^{k_s}(\cdots (\ad(e_1)^{k_1}))(Z_i-\chi(Z_i)) \bff^\gamma\otimes v.$$
Note that any elements from $I^\chi_\bo$  act trivially on $v$. So we can write
\begin{align*}
(Z_i-\chi(Z_i).\bfe^\alpha\bff^\gamma\otimes v
&\equiv \bfe^\alpha\bff^\gamma(Z_i-\chi(Z_i))\otimes v-
\sum_{k=1}^s a_k\bfe^{\alpha-\epsilon_k}\bff^\gamma [e_k,Z_i]\otimes v
\mod \calv_{(l-2)}\cr
&\equiv a_i\bfe^{\alpha-\epsilon_i}\bff^\gamma \otimes v
\mod \calv_{(l-2)}.
\end{align*}
By the same arguments, we can deal with the case when $\alpha=0$. We finally  have
 \begin{align*}
 T_j\bff^\gamma\otimes v
 &\equiv
\bff^\gamma T_j\otimes v+
 \sum_{k=1}^t \bff^{\gamma-\epsilon_k}[f_k, T_j]\otimes v\mod \calv_{(l-2)}\cr
&\equiv \bff^{\gamma-\epsilon_j}\otimes v
\mod \calv_{(l-2)}.
\end{align*}
The proof is completed.
\end{proof}

\begin{lemma} \label{lem: key lemma}
Keep the above notations and assumptions. In particular, let $\ggg$ be a solvable restricted Lie superalgebra. Let $\rho: \ggg\rightarrow \gl(V)$ be an irreducible representation of $U_\chi(\ggg)$ on $V$.
The following statements hold.
\begin{itemize}

\item[(1)] The module  $V_{I,\chi}$  defined in (\ref{eq: V I chi})
     is an irreducible $U_{\chi}(I^\chi)$-module.

\item[(2)] Furthermore, if we set $\calv:=U_\chi(\ggg)\otimes_{U_\chi(I^\chi)}V_{I,\chi}$, then $V\cong\calv$.
\end{itemize}
\end{lemma}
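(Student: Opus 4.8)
The plan is to follow the classical Clifford-theory argument for solvable restricted Lie algebras (as in \cite{WK} or \cite[Chapter 5]{SF}), adapted to the super setting, using the degree-reduction formulas of the previous lemma as the key computational input. First I would prove part (1): the $U_\chi(I^\chi)$-module $V_{I,\chi}$ is irreducible. Let $0\ne W\subseteq V_{I,\chi}$ be a nonzero $U_\chi(I^\chi)$-submodule. The goal is to show $U_\chi(\ggg)W=V$ and that this forces $W=V_{I,\chi}$. Since $V$ is an irreducible $U_\chi(\ggg)$-module and $U_\chi(\ggg)$ is generated over $U_\chi(I^\chi)$ by the $e_i$'s and $f_j$'s, we have $U_\chi(\ggg)W=\sum_{(\alpha,\gamma)\in P^s\times E^t}\bfe^\alpha\bff^\gamma W=V$, using the PBW basis of $U_\chi(\ggg)$ relative to $U_\chi(I^\chi)$ (here one must check that $I^\chi$ being a restricted subalgebra makes $U_\chi(\ggg)$ free over $U_\chi(I^\chi)$ with basis $\bfe^\alpha\bff^\gamma$, which follows from the PBW theorem after choosing the cobasis). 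Now suppose $W\subsetneq V_{I,\chi}$; pick $v\in V_{I,\chi}\setminus W$. Since $v\in V=U_\chi(\ggg)W$, write $v=\sum_{(\alpha,\gamma)} \bfe^\alpha\bff^\gamma w_{(\alpha,\gamma)}$ with $w_{(\alpha,\gamma)}\in W$, and take such an expression with $\max\{\|(\alpha,\gamma)\|\}$ minimal; call this maximum $l$. If $l>0$ the degree-reduction formulas (\ref{eq: degree redu}) applied with the operators $Z_i-\chi(Z_i)$ (if some top-degree $\alpha\ne0$) or $T_j$ (if $\alpha=0$) produce, modulo $\calv_{(l-2)}$, a new expression of strictly smaller top degree whose difference with a multiple of $v$ still lies in $U_\chi(\ggg)W$ — iterating and using that $Z_i,T_j\in I$ act on $V_{I,\chi}$ by the scalar $\chi$ (so they preserve $V_{I,\chi}$), one reduces to $l=0$, i.e. $v\in W$, a contradiction. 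Hence $W=V_{I,\chi}$.

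For part (2), consider the natural $U_\chi(\ggg)$-module surjection $\Theta:\calv=U_\chi(\ggg)\otimes_{U_\chi(I^\chi)}V_{I,\chi}\twoheadrightarrow V$ induced by the inclusion $V_{I,\chi}\hookrightarrow V$ and the $\ggg$-action (this is well-defined since $V_{I,\chi}$ is a genuine $U_\chi(I^\chi)$-submodule of $V$). It suffices to show $\Theta$ is injective, equivalently (by equality of dimensions is not available, so) that $\ker\Theta=0$. Suppose $0\ne K=\ker\Theta$; then $K\cap V_{I,\chi}$ — where I identify $V_{I,\chi}$ with $1\otimes V_{I,\chi}\subseteq\calv$ — must be zero, because $\Theta$ restricts to the identity on $1\otimes V_{I,\chi}$. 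Now take $0\ne \eta\in K$ and write $\eta=\sum_{(\alpha,\gamma)}\bfe^\alpha\bff^\gamma\otimes v_{(\alpha,\gamma)}$; let $l=\max\{\|(\alpha,\gamma)\|: v_{(\alpha,\gamma)}\ne0\}$. If $l=0$ then $\eta\in 1\otimes V_{I,\chi}$, contradicting $K\cap V_{I,\chi}=0$. If $l>0$, apply the same reduction: using (\ref{eq: degree redu}), the operators $Z_i-\chi(Z_i)$ and $T_j$ send $\eta$ into $\calv_{(l-1)}$ (strictly lowering the top degree) while keeping it inside $K$ (as $K$ is a $U_\chi(\ggg)$-submodule) and not killing a top-degree component — one needs the combinatorial point that the indices $a_i$ (resp. $c_j$) appearing as coefficients are nonzero precisely for the relevant monomials, and that running over all $i$ (resp. $j$) detects every top monomial. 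A descending induction on $l$ then produces a nonzero element of $K$ of degree $0$, contradiction. Hence $K=0$ and $V\cong\calv$.

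The main obstacle I anticipate is making the degree-reduction step fully rigorous in the super setting: one must be careful about signs coming from the $\bbz_2$-grading when commuting odd elements $T_j$ and $f_j$ past one another, about the fact that on $V_{I,\chi}$ the odd elements of $I^\chi_\bo$ act by $\chi$ which is $0$ on the odd part (so these act as zero, which is used silently in the previous lemma's proof), and about the case distinction ``$\alpha\ne0$ vs. $\alpha=0$'' in (\ref{eq: degree redu}) — the even reductions must be exhausted before the odd ones can be invoked. A secondary technical point is verifying that $\calv_{(l-2)}$ (rather than $\calv_{(l-1)}$) is the correct error term and organizing the induction so that the parity of $l$ does not cause trouble; this is why the reduction should be phrased as ``strictly decrease the top degree'' and iterated, rather than attempting a one-step argument. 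Everything else — freeness of $U_\chi(\ggg)$ over $U_\chi(I^\chi)$, solvability and restrictedness of $I^\chi$, nonvanishing of $V_{I,\chi}$ — has already been established in the preceding discussion and can be quoted directly.
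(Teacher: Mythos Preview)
Your Part (2) argument is correct and closely parallels the paper's: both use the degree-reduction formulas to show that any nonzero $U_\chi(\ggg)$-submodule of $\calv$ meets $1\otimes V_{I,\chi}$ nontrivially. The paper phrases this as ``$\calv$ is irreducible'' (and then invokes Part (1) to conclude the intersection is all of $V_{I,\chi}$), whereas you apply it directly to $K=\ker\Theta$, which has trivial intersection with $1\otimes V_{I,\chi}$ by construction; your organization in fact avoids needing Part (1) at this step, which is a mild advantage.

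Your Part (1) argument, however, has a genuine gap. You write $v\in V_{I,\chi}\setminus W$ as $v=\sum\bfe^\alpha\bff^\gamma w_{(\alpha,\gamma)}$ inside $V$ and apply $Z_i-\chi(Z_i)$, hoping to lower the top degree of the expression. But $(Z_i-\chi(Z_i))v=0$ since $v\in V_{I,\chi}$, so what you obtain is only the identity $0=(\text{something in }U_\chi(\ggg)W)$, not a new, shorter expression for $v$ itself; and the formulas (\ref{eq: degree redu}) are stated modulo $\calv_{(l-2)}$, i.e.\ they exploit the \emph{direct-sum} PBW decomposition of $\calv$, which $V$ does not carry a priori, so the ``minimal-degree expression'' device cannot detect vanishing of individual top coefficients. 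The remedy is to reverse the logical order: your Part (2) already yields $\calv\cong V$ without using Part (1); then run the identical kernel argument on the map $U_\chi(\ggg)\otimes_{U_\chi(I^\chi)}W\to V$ for any nonzero $I^\chi$-submodule $W\subseteq V_{I,\chi}$ (its kernel also meets $1\otimes W$ trivially) to get a second isomorphism, and compare dimensions to force $W=V_{I,\chi}$. (The paper itself simply declares Part (1) ``clear'' and then uses it inside the proof of Part (2); your route, once repaired as above, is arguably cleaner.)
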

\begin{proof}
(1) It is clear.

(2) Suppose $W$ is a nonzero submodule of $U_\chi(\ggg)$ in $\calv$. Set $W_0:=\{w\in V_{I,\chi}\mid 1\otimes w\in W\}$.  Obviously,  $W_0$ is a $U_\chi(I^\chi)$-submodule of $V_{I,\chi}$. According to Part (1), $V_{I,\chi}$ is an irreducible $U_\chi(I^\chi)$-module.  Hence $W_0=V_{I,\chi}$ or $W_0=0$. If the former case occurs, then $W=\calv$. Hence it suffices to show that $W_0$ is nonzero.

Recall
$\calv=\sum_{(\alpha,\gamma)\in P^s\times E^t}\bk {\bfe}^\alpha{\bff}^\gamma\otimes V_{I,\chi}$.
Obviously,
$ W=\bigcup_{l\geq0} (W\cap \calv_{(l)})$.
On the other hand, we put $W_{(l)}:=\sum_{\underset{ ||(\alpha,\gamma)||\leq l}{(\alpha,\gamma)\in P^s\times E^t}}\bk \bfe^\alpha\bff^\gamma\otimes W_0$.
We claim that
$$W=\sum_{(\alpha, \gamma)\in P^s\times E^t}\bk {\bfe}^\alpha{\bff}^\gamma\otimes W_0.$$
It  yields that $W_0\ne 0$, which is our purpose.
In order to verify the claim, it suffices to show that
\begin{itemize}
\item[(*)] $W_{(l)}=\calv_{(l)}\cap W$ for all $l\geq 0$.
\end{itemize}
By definition, (*) is true for $l=0$. We now prove that $W\cap \calv_{(l)}\subset W_{(l)}$ by induction on $l$. Let $l\geq 1$ and assume that $W\cap \calv_{(l-1)}\subset W_{(l-1)}$.  Suppose that $v\in W\cap \calv_{(l)}$ is arbitrarily given, we intend to show that $w\in W_{(l)}$. Take a cobasis $\{v_1,\ldots, v_q\}$ of $W_0$ in $V_{I,\chi}$. Without loss of generality, we might as well assume $$v=\sum_{k=1}^q\sum_{\underset{ ||(\alpha,\gamma)||\leq l}{(\alpha,\gamma)\in P^s\times E^t}}C_{\alpha,\gamma,k}\bfe^\alpha \bff^\gamma\otimes v_k$$
with all $C_{\alpha,\gamma,k}\in \bk$.
By (\ref{eq: degree redu}), we have for $i=1,\ldots,s,$
$$(Z_i-\chi(Z_i)).v\equiv \sum_{k}\sum_{||(\alpha,\gamma)||=l}C_{\alpha,\gamma,k}\bfe^{\alpha-\epsilon_i} \bff^\gamma\otimes v_k\mod\calv_{(l-2)}.$$
Thus $(Z_i-\chi(Z_i)).v\in W\cap \calv_{(l-1)}$. By the inductive hypothesis, it lies in $W_{(l-1)}$. Hence all $C_{\alpha,\gamma,k}=0$ as long as $||(\alpha,\gamma)||=l$ and $\alpha\ne 0$.  In the same way, let us deal with the case when $|(\alpha,\gamma)|=l$ and $\alpha= 0$. Multiplication by $T_j$ on $v$, $j=1,\ldots,t$ yields
$$T_j.v\equiv \sum_{k}\sum_{||(\alpha,\gamma)||=l}C_{0,\gamma,k} \bff^{\gamma-\epsilon_j}\otimes v_k\mod\calv_{(l-2)}.$$
 For the same reason as the previous case, we have that $C_{0,\gamma,k}=0$ for all $\gamma$ with $||(0,\gamma)||=l$. Hence $v$ must be zero. In summary,  $W\cap \calv_{(l)}\subset W_{(l)}$. Finally, we have $W\cap \calv_{(l)}=W_{(l)}$. This implies $W=\calv$. Consequently, $\calv$ is an irreducible $U_\chi(\ggg)$-module. On the other hand, there is a natural surjective $U_\chi(\ggg)$-homomorphism from $\calv$ onto $V$. The irreducibility of both $V$ and $\calv$ implies that the natural surjective homomorphism must be an isomorphism.
\end{proof}

\subsection{Irreducible modules of solvable restricted Lie superalgebras} Furthermore, we have the following result.
\begin{prop}\label{prop: 4.5} Keep the assumptions and notations as above. Suppose $\rho:\ggg\rightarrow \gl(V)$  is a finite-dimensional irreducible representation of $U_\chi(\ggg)$. Then there is a subalgebra $\hhh=\hhh_\bz+\hhh_\bo$ such that
\begin{itemize}
\item[(i)]
$\dim V\geq p^{\dim\ggg_\bz-\dim\hhh_\bz}2^{\dim\ggg_\bo-\dim\hhh_\bo}$, and
\item[(ii)] $V$ contains a one-dimensional $\hhh$-submodule.
\end{itemize}
\end{prop}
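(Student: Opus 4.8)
The plan is to prove Proposition \ref{prop: 4.5} by induction on $\dim\ggg$, using Lemma \ref{lem: key lemma} as the engine for the inductive step and Lemma \ref{lem: moving} (together with Lemma \ref{lem: basic lem}) to handle the base/degenerate situation. Concretely, given an irreducible $U_\chi(\ggg)$-module $V$, I first ask whether $[\ggg,\ggg]$ acts on $V$ by scalars via $\chi$; more precisely, whether there is an ideal $I$ of $\ggg$ with $I^{(1)}\subseteq\ker\rho$ and $[\ggg,I]\nsubseteq\ker\rho$. If no such ideal exists, then in particular $\ggg^{(1)}$ acts nilpotently (one reduces to the case $\ggg^{(1)}$ nilpotent and $\chi(\ggg^{(1)})=0$ by a standard argument for solvable Lie superalgebras, as in \cite{SF}), so Lemma \ref{lem: moving} forces $\dim V=1$; then $\hhh=\ggg$ itself works, satisfying (i) trivially and (ii) tautologically.

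If such an ideal $I$ exists, I invoke Lemma \ref{lem: key lemma}: the subspace $V_{I,\chi}$ of (\ref{eq: V I chi}) is a nonzero irreducible $U_\chi(I^\chi)$-module, and $V\cong U_\chi(\ggg)\otimes_{U_\chi(I^\chi)}V_{I,\chi}$. Since I showed in the discussion preceding Lemma \ref{lem: key lemma} that $I^\chi$ is a \emph{proper} restricted subalgebra of $\ggg$, and $I^\chi$ is again solvable and restricted, the inductive hypothesis applies to the irreducible $U_\chi(I^\chi)$-module $V_{I,\chi}$: there is a subalgebra $\hhh=\hhh_\bz+\hhh_\bo\subseteq I^\chi$ with $\dim V_{I,\chi}\geq p^{\dim I^\chi_\bz-\dim\hhh_\bz}2^{\dim I^\chi_\bo-\dim\hhh_\bo}$ and $V_{I,\chi}$ containing a one-dimensional $\hhh$-submodule. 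Now by the PBW basis of the induced module, $\dim V=p^{\dim\ggg_\bz-\dim I^\chi_\bz}2^{\dim\ggg_\bo-\dim I^\chi_\bo}\cdot\dim V_{I,\chi}$, so multiplying the two estimates gives
\begin{align*}
\dim V\geq p^{\dim\ggg_\bz-\dim I^\chi_\bz}2^{\dim\ggg_\bo-\dim I^\chi_\bo}\cdot p^{\dim I^\chi_\bz-\dim\hhh_\bz}2^{\dim I^\chi_\bo-\dim\hhh_\bo}=p^{\dim\ggg_\bz-\dim\hhh_\bz}2^{\dim\ggg_\bo-\dim\hhh_\bo},
\end{align*}
which is (i). For (ii), the one-dimensional $\hhh$-submodule $\bk w\subseteq V_{I,\chi}\subseteq V$ (where we identify $V_{I,\chi}$ with $1\otimes V_{I,\chi}\subseteq\calv\cong V$) is still a one-dimensional $\hhh$-submodule of $V$, because $\hhh\subseteq I^\chi$ acts on $1\otimes V_{I,\chi}$ through its action on $V_{I,\chi}$. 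This closes the induction.

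The base case $\dim\ggg=0$ (or $\dim\ggg=1$) is trivial, and I should also record that when $\ggg$ is abelian the claim is immediate with $\hhh=\ggg$. The one genuine subtlety — the step I expect to be the main obstacle — is the dichotomy at the start: guaranteeing that \emph{either} $\ggg^{(1)}$ acts nilpotently on $V$ (so Lemma \ref{lem: moving} applies) \emph{or} an ideal $I$ with $I^{(1)}\subseteq\ker\rho$ and $[\ggg,I]\nsubseteq\ker\rho$ exists. This is exactly where Lemma \ref{lem: basic lem}(2) enters: if $\ggg$ is non-abelian solvable, its center does not contain all abelian ideals, so one can produce an abelian ideal $A\not\subseteq C(\ggg)$; taking $I$ to be a suitable ideal built from $A$ and the kernel of $\rho$ (or arguing on $\ggg/\ker\rho$, which is again solvable), one arranges $I^{(1)}\subseteq\ker\rho$ while $[\ggg,I]\nsubseteq\ker\rho$. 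Making this selection precise — and checking that $I^\chi\subsetneq\ggg$ in every case, which was already done above — is the part that needs care; the rest is bookkeeping with PBW dimensions.
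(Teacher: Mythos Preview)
Your proposal follows essentially the same inductive strategy as the paper: induct on $\dim\ggg$, split into the case where $\ggg/\ker\rho$ is abelian versus non-abelian, and in the non-abelian case use Lemma~\ref{lem: basic lem}(2) to manufacture an ideal $I$ with $I^{(1)}\subseteq\ker\rho$ and $[\ggg,I]\nsubseteq\ker\rho$, then apply Lemma~\ref{lem: key lemma} and the inductive hypothesis on the proper restricted subalgebra $I^\chi$. The dimension bookkeeping via the PBW basis of the induced module and the verification of (ii) are exactly as in the paper.

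The one place where your write-up is muddled is the degenerate branch. You invoke Lemma~\ref{lem: moving}, whose hypotheses are that $\ggg^{(1)}$ is nilpotent \emph{as a Lie superalgebra} and that $\chi(\ggg^{(1)})=0$; neither follows from the mere nonexistence of an ideal $I$, and your parenthetical ``one reduces to the case\ldots by a standard argument'' is neither justified nor needed. The paper handles this branch more directly: if no such $I$ exists then (by the contrapositive of Lemma~\ref{lem: basic lem}(2) applied to $\ggg/\ker\rho$) the quotient $\ggg/\ker\rho$ is abelian, i.e.\ $\ggg^{(1)}\subseteq\ker\rho$, so $\ggg^{(1)}$ acts \emph{trivially} on $V$, and Lemma~\ref{lem: basic lem}(1) immediately gives $\dim V=1$. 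Replace your appeal to Lemma~\ref{lem: moving} with this one-line use of Lemma~\ref{lem: basic lem}(1) and the proof is complete and matches the paper's.
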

\begin{proof}
 Consider $\ker\rho$ (the kernel of $\rho$) which is an ideal of $\ggg$.
If $\ggg\slash \ker\rho$ is abelian, then $[\ggg,\ggg]$ acts trivially on $V$.
Hence $V$ is certainly one-dimensional with trivial action of $[\ggg_\bz,\ggg_\bz]+\ggg_\bo$. So the proposition is true in this case while we take $\hhh$ to be $\ggg$ itself.

 In the following, we suppose that $\overline\ggg:=\ggg\slash \ker\rho$ is not abelian. We will prove the proposition by induction on $\dim\ggg$ by steps.

 (1) Keep in mind the assumption that $\overline\ggg:=\ggg\slash \ker\rho$ is not abelian. So the center $C(\overline\ggg)$ is a proper subalgebra of $\overline\ggg$. So $C(\overline\ggg)$ does not contain all abelian ideal of $\ggg$, due to Lemma \ref{lem: basic lem}(2). So, there exists an ideal $I$ of $\ggg$ such that $I^{(1)}\subset \ker\rho$ and $[\ggg,I]\nsubseteq \ker\rho$.
  Note that $I^{(1)}$ acts trivially on $V$. So $\chi(I^{(1)})=0$. Hence there is an irreducible $I$-submodule in $V$ which is one-dimensional. Still denote $V_{I,\chi}=\{v\in V\mid X.v=\chi(X)v\;\forall X\in I\}$. Then we have $V_{I,\chi}$ is nonzero. Keep the notation $I^\chi=\{X\in \ggg\mid \chi([X,I])=0\}$. It is already known that $I^\chi$ is a solvable restricted subalgebra of $\ggg$. By the arguments in the first paragraph of \S\ref{sec: para4.2},
 $I^\chi$ is a proper subalgebra of $\ggg$.
   By Lemma \ref{lem: key lemma},  $V_{I,\chi}$ is an irreducible $U_\chi(I^\chi)$-module, and
 \begin{align*}
 V\cong U_\chi(\ggg)\otimes_{U_\chi(I^\chi)}V_{I,\chi}.
 \end{align*}
 Correspondingly,
 \begin{align}\label{eq: basic iso}
 \dim V=p^{\dim \ggg_\bz-\dim I^\chi_\bz}2^{\dim \ggg_\bo-\dim I^\chi_\bo}\dim V_{I,\chi}.
 \end{align}

(2) Note that $I^\chi$ already turns out to be a proper solvable restricted subalgebra. By the inductive hypothesis, there exits a solvable restricted subalgebra $\hhh$ in $I^\chi$ such that the requirements (i) and (ii) are satisfied with respect to the irreducible $U_\chi(I^\chi)$-module $V_{I,\chi}$.  Hence, by (\ref{eq: basic iso}) we finally have
$\dim V\geq p^{\dim\ggg_\bz-\dim\hhh_\bz}2^{\dim\ggg_\bo-\dim\hhh_\bo}$.
\end{proof}

Combining Proposition \ref{prop: 4.5} and Lemma \ref{lem: key lemma}(2), we conclude the following result.

\begin{corollary}\label{coro: 4.6} Let $\ggg$ be a solvable restricted Lie superalgebra. Then any irreducible module of $\ggg$ must be isomorphic to $U_\chi(\ggg)\otimes_{U_\chi(\hhh)}S$ for  some restricted subalgebra $\hhh$ with $\chi(\hhh^{(1)})=0$ and for a one-dimensional $U_\chi(\hhh)$-module $S$. Correspondingly, any irreducible module has dimension $p^m2^n$ for some $m,n\in\bbn$.
\end{corollary}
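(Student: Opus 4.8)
The plan is to read the statement off the inductive construction behind Proposition~\ref{prop: 4.5}, combined with the isomorphism supplied by Lemma~\ref{lem: key lemma}(2). Let $V$ be an irreducible $\ggg$-module, let $\rho\colon\ggg\to\gl(V)$ be the corresponding representation, and let $\chi\in\ggg_\bz^*$ be the $p$-character of $V$, so that $V$ is an irreducible $U_\chi(\ggg)$-module. If $\overline\ggg:=\ggg/\ker\rho$ is abelian, then $V$ is one-dimensional and we simply take $\hhh=\ggg$, $S=V$. Otherwise the proof of Proposition~\ref{prop: 4.5} produces a proper solvable restricted subalgebra $I^\chi\subsetneq\ggg$ together with an isomorphism $V\cong U_\chi(\ggg)\otimes_{U_\chi(I^\chi)}V_{I,\chi}$ (this is Lemma~\ref{lem: key lemma}(2)), where $V_{I,\chi}$ is an irreducible $U_\chi(I^\chi)$-module. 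Running the same construction on $(I^\chi,\chi|_{I^\chi_\bz},V_{I,\chi})$ and iterating --- the dimension of the ambient algebra drops strictly at each step --- terminates at a restricted subalgebra $\hhh$ of $\ggg$ and a one-dimensional $U_\chi(\hhh)$-module $S$; transitivity of the induction functor then yields
$$V\cong U_\chi(\ggg)\otimes_{U_\chi(\hhh)}S.$$

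Alternatively one may shortcut this with a dimension count. Proposition~\ref{prop: 4.5} directly furnishes a restricted subalgebra $\hhh$, a one-dimensional $\hhh$-submodule $S\subseteq V$ (which is a $U_\chi(\hhh)$-module, since $S$ inherits the $\chi$-reduced property from $V$), and the bound $\dim V\geq p^{\dim\ggg_\bz-\dim\hhh_\bz}2^{\dim\ggg_\bo-\dim\hhh_\bo}$. Put $M:=U_\chi(\ggg)\otimes_{U_\chi(\hhh)}S$. By the PBW basis of \S\ref{reduced}, $U_\chi(\ggg)$ is free over $U_\chi(\hhh)$ of rank $p^{\dim\ggg_\bz-\dim\hhh_\bz}2^{\dim\ggg_\bo-\dim\hhh_\bo}$, so $\dim M$ is exactly this number. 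Frobenius reciprocity turns the inclusion $S\hookrightarrow V$ into a nonzero $U_\chi(\ggg)$-homomorphism $M\to V$, which is surjective because $V$ is irreducible; hence $\dim V\leq\dim M$, and comparison with the previous bound forces $\dim V=\dim M$ and $M\cong V$. In either route, setting $m=\dim\ggg_\bz-\dim\hhh_\bz$ and $n=\dim\ggg_\bo-\dim\hhh_\bo$ gives $\dim V=p^m2^n$ with $m,n\in\bbn$.

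It remains to record $\chi(\hhh^{(1)})=0$. Since $S$ is one-dimensional it is purely even or purely odd, so $\hhh_\bo$ acts on it by zero, and consequently every iterated bracket --- that is, all of $\hhh^{(1)}$ --- acts on $S$ by zero. For $x\in\hhh^{(1)}_\bz$ the defining relation $x^p-x^{[p]}=\chi(x)^p$ of $U_\chi(\hhh)$, evaluated on $S$, reads $-\mu(x^{[p]})=\chi(x)^p$, where $\mu$ denotes the character of $\hhh_\bz$ acting on $S$; using that $\hhh^{(1)}$ is a restricted ideal of $\hhh$ we have $x^{[p]}\in\hhh^{(1)}_\bz$, whence $\mu(x^{[p]})=0$ and $\chi(x)=0$, while $\chi$ annihilates the odd part by convention. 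The one point that genuinely requires care is this last step, namely that the even part of the derived subalgebra of a restricted Lie superalgebra is stable under the $p$-mapping (so that $U_\chi(\hhh)$ does descend to the abelian quotient $\hhh/\hhh^{(1)}$ compatibly with $\chi$); everything else is routine bookkeeping built on Proposition~\ref{prop: 4.5} and Lemma~\ref{lem: key lemma}(2).
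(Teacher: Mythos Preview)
Your two routes to the isomorphism $V\cong U_\chi(\ggg)\otimes_{U_\chi(\hhh)}S$ and to $\dim V=p^m2^n$ are correct and are exactly what the paper's one-line proof (``combining Proposition~\ref{prop: 4.5} and Lemma~\ref{lem: key lemma}(2)'') is meant to convey; the iterated induction with transitivity of the tensor product and the Frobenius-reciprocity dimension count are two equivalent unpackings of that sentence.

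The genuine gap is in your last paragraph. The claim that $\hhh^{(1)}$ is a restricted ideal --- equivalently that $\hhh^{(1)}_\bz$ is stable under $[p]$ --- is false in general, so the step ``$x^{[p]}\in\hhh^{(1)}_\bz$, whence $\mu(x^{[p]})=0$'' does not go through. A purely even counterexample: let $\hhh_\bz$ have basis $a,b,z,w$ with the only nontrivial bracket $[a,b]=z$, and set $a^{[p]}=b^{[p]}=w^{[p]}=0$, $z^{[p]}=w$; then $\hhh^{(1)}=\bk z$ but $z^{[p]}=w\notin\hhh^{(1)}$. Worse, with $\chi(a)=\chi(b)=0$, $\chi(z)=1$, $\chi(w)=-1$, the character $\mu$ given by $\mu(a)=\mu(b)=\mu(z)=0$, $\mu(w)=-1$ defines a one-dimensional $U_\chi(\hhh)$-module, yet $\chi(\hhh^{(1)})\neq0$; and since inducing from any \emph{proper} restricted subalgebra would produce a module of dimension at least $p$, no subalgebra whatsoever satisfies the clause $\chi(\hhh^{(1)})=0$ for this irreducible. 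The paper's own proof does not supply an argument for this clause either, so the defect appears to lie with the statement rather than with your strategy; the induced-module description and the dimension formula $p^m2^n$ survive intact.
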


\subsection{Irreducible modules of  general finite-dimensional solvable Lie superalgebras which are not necessarily restricted}\label{sec: p-envel and gen solv}

 In this section, we will extend the above results for  solvable restricted Lie subalgebras to any solvable ones.
   Let $\ggg=\ggg_\bz+\ggg_\bo$ be a given solvable Lie superalgbra. {Recall that $\ggg$ admits a minimal finite-dimensional $p$-envelope $\ggg_p$, such that $\ggg_p=(\ggg_\bz)_p\oplus \ggg_\bo$ and $(\ggg_\bz)_p$ is a $p$-envelope of $\ggg_\bz$ (see Lemma \ref{lem: app} in the Appendix).
   This $\ggg_p$  becomes a solvable restricted Lie superalgebra}. As mentioned in Appendix, $\ggg$ is an ideal of $\ggg_p$. More precisely, $\ad(\ggg_\bz)_p(\ggg_\bz)\subset \ggg_\bz^{(1)}$, and $\ad(\ggg_\bz)_p\ggg_\bo\subset \ggg^{(1)}$.

   For irreducible $\ggg$-module $(V,\rho)$, $\rho$ can extend to the one over $\ggg_p$ (see for example \cite[2.5.3]{SF}). Hence $V$ becomes an irreducible $\ggg_p$-modules. Hence there exits a unique $\Upsilon\in (\ggg_\bz)_p^*$,  with which the irreducible $\ggg_p$-module $V$ is associated.  Set $\chi=\Upsilon|_{\ggg_\bz}$.  Then the irreducible $\ggg$-module $V$ is associated with a unique linear function $\chi\in\ggg_{\bz}^*$.

\begin{theorem}\label{lem: general solv} Let $\ggg$ be a solvable Lie superalgebra.
  Any irreducible module $V$ of $\ggg$ is associated with some $\chi\in \ggg_\bz^*$, which has dimension $p^{n}2^{\dim{\ggg_\bo\slash \hhh_\bo}}$ where  $\hhh$ is a subalgebra with $\chi(\hhh^{(1)})=0$ and $V$ contains a one-dimensional $\hhh$-module, and $n$ depends on $\hhh$.
\end{theorem}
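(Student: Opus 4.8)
The plan is to reduce the non-restricted case to the restricted case treated in Corollary \ref{coro: 4.6}, using the minimal $p$-envelope $\ggg_p$ constructed in the Appendix. First I would recall the setup of \S\ref{sec: p-envel and gen solv}: given an irreducible $\ggg$-module $(V,\rho)$, one extends $\rho$ to $\ggg_p=(\ggg_\bz)_p\oplus\ggg_\bo$ (possible since $\ggg$ is an ideal of $\ggg_p$ and $\ad(\ggg_\bz)_p$ lands in $\ggg^{(1)}$, so any central extension argument of the type in \cite[2.5.3]{SF} applies); this makes $V$ an irreducible $\ggg_p$-module, and by the argument already given it carries a well-defined $p$-character $\Upsilon\in(\ggg_\bz)_p^*$, whose restriction $\chi=\Upsilon|_{\ggg_\bz}$ is the associated linear functional on $\ggg_\bz$. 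The even part $\ggg_\bo$ is unchanged in passing to $\ggg_p$, which is the crucial point for controlling the power of $2$.

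Next I would apply Corollary \ref{coro: 4.6} to the solvable restricted Lie superalgebra $\ggg_p$: the irreducible $U_\Upsilon(\ggg_p)$-module $V$ is isomorphic to $U_\Upsilon(\ggg_p)\otimes_{U_\Upsilon(\widetilde\hhh)}S$ for some restricted subalgebra $\widetilde\hhh=\widetilde\hhh_\bz+\widetilde\hhh_\bo\subseteq\ggg_p$ with $\Upsilon(\widetilde\hhh^{(1)})=0$ and a one-dimensional $U_\Upsilon(\widetilde\hhh)$-module $S$. Then $\dim V=p^{\dim(\ggg_\bz)_p-\dim\widetilde\hhh_\bz}\,2^{\dim\ggg_\bo-\dim\widetilde\hhh_\bo}$. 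Now I set $\hhh:=\widetilde\hhh\cap\ggg$; since $V$ restricts irreducibly (indeed contains the one-dimensional $\widetilde\hhh$-module $S$, on which $\widetilde\hhh^{(1)}\supseteq\hhh^{(1)}$ acts via $\Upsilon$, hence $\chi(\hhh^{(1)})=\Upsilon(\hhh^{(1)})=0$), this $\hhh$ is a subalgebra of $\ggg$ with $\chi(\hhh^{(1)})=0$ and $V$ contains a one-dimensional $\hhh$-submodule, namely $S$ viewed as an $\hhh$-module. For the odd part one checks $\widetilde\hhh_\bo=\hhh_\bo$, because $\ggg_\bo=(\ggg_p)_\bo$; hence the exponent of $2$ is exactly $\dim\ggg_\bo-\dim\hhh_\bo=\dim(\ggg_\bo/\hhh_\bo)$, giving the claimed factor $2^{\dim\ggg_\bo/\hhh_\bo}$. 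The exponent $n=\dim(\ggg_\bz)_p-\dim\widetilde\hhh_\bz$ of $p$ is a non-negative integer depending on $\hhh$ (equivalently on $\widetilde\hhh$), which is all the statement asserts about it.

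The main obstacle I anticipate is the bookkeeping around the two enveloping algebras: $U_\Upsilon(\ggg_p)$ is a reduced enveloping algebra of the restricted superalgebra $\ggg_p$, while the $\ggg$-module $V$ a priori only "has a $p$-character" in the weaker sense coming from the extension to $\ggg_p$, so I need to be careful that the functional $\chi$ is genuinely well-defined on $\ggg_\bz$ and independent of the choice of extension (this is exactly the uniqueness of $\Upsilon$ recorded in \S\ref{sec: p-envel and gen solv}, which I would invoke). A secondary point is verifying that $\widetilde\hhh_\bo\subseteq\ggg_\bo$ automatically — but this is immediate since $(\ggg_p)_\bo=\ggg_\bo$, so no odd directions are added by $p$-enveloping. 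Everything else is a direct transport of Corollary \ref{coro: 4.6} along the inclusion $\ggg\hookrightarrow\ggg_p$, so the proof should be short once the $p$-envelope formalism of the Appendix is in hand.
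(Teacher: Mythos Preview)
Your proposal is correct and follows essentially the same route as the paper's proof: extend $V$ to the minimal $p$-envelope $\ggg_p$, apply Corollary~\ref{coro: 4.6} there to obtain a restricted subalgebra $\widetilde\hhh\subset\ggg_p$ (the paper calls it $\frak{H}$) with the induced-module description, and then intersect with $\ggg$ to get $\hhh$, using $(\ggg_p)_\bo=\ggg_\bo$ to identify the odd parts and hence the exponent of $2$. Your handling of $\chi(\hhh^{(1)})=0$ via $\hhh^{(1)}\subset\widetilde\hhh^{(1)}$ is in fact slightly more careful than the paper's phrasing.
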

\begin{proof} For any given irreducible $\ggg$-module $(V,\rho)$, as arguments above  $V$ becomes an irreducible $\ggg_p$-modules associated with $\Upsilon\in (\ggg_\bz)_p^*$, and $V$ is associated with $\chi:=\Upsilon_{\ggg_\bz}\in \ggg^*_\bz$.
By Corollary \ref{coro: 4.6}, there exists a restricted subalgebra $\frak{H}$ of $\ggg_p$ with $\Upsilon(\frak{H}^{(1)})=\chi(\frak{H}^{(1)})=0$ such that
$V\cong U_\chi(\ggg_p)\otimes_{U_\chi(\frak{H})}S$
where $S\subset V$ is a one-dimensional $\frak{H}$-module.  Correspondingly, $\dim V=p^{\dim{(\ggg_\bz)_p\slash \frak{H}_\bz}}2^{\dim{\ggg_\bo\slash \frak{H}_\bo}}$.

Take  $\hhh=\frak{H}\cap \ggg$. By definition, $\hhh_\bz=\frak{H}_\bz\cap \ggg_\bz$, and $\hhh_\bo=\frak{H}_\bo\cap \ggg_\bo=\frak{H}_\bo$.
 Then $\chi(\hhh^{(1)})=0$, and $\hhh$ has one-dimensional module $S$. 
 Set $n=\dim{(\ggg_\bz)_p\slash \frak{H}_\bz}$. Then
we have  $$\dim V=p^{n}2^{\dim{\ggg_\bo\slash \hhh_\bo}}.$$
The proof is completed.
\end{proof}

\begin{remark}
 (1) The above theorem is an extension of the counterpart result on solvable Lie algebras (see \cite{Sch} or \cite[\S 5.8]{SF}).

 (2) With the above theorem, we can propose the possibility that  super KW property raised by Wang-Zhao in \cite{WZ1} is satisfied with all finite-dimensional solvable Lie superalgebras over $\bk$.
\end{remark}

\section{Irreducible modules of completely solvable Lie superalgebras}

    {A Lie superalgebra $\ggg$ is called completely solvable if $\ggg^{(1)}$ is nilpotent.  Obviously, a completely solvable $\ggg$ is solvable and its even part $\ggg_\bz$ is a completely solvable Lie algebra.}

\subsection{} The following facts are very important to the subsequent arguments.
\begin{lemma}\label{lem: 5.1} Let $\ggg=\ggg_\bz+\ggg_\bo$ be a completely solvable Lie superalgebra.
The following statements hold.
\begin{itemize}
\item[(1)] Any minimal ideal of $\ggg$ is one-dimensional.
\item[(2)]  There exists a sequence of ideals
$\ggg=\ggg_0 \supset \ggg_1\supset \cdots \supset \ggg_{n-1}\supset\ggg_{n}=0$
such that $\dim_\bk \ggg_i=n-i$.
\item[(3)] For $\chi\in\ggg_{\bz}^*$, if $\hhh$ is a subalgebra of codimension one which contains $\zzz^\chi$, then $D(\hhh,\chi)\subset D(\ggg,\chi)$.
    \item[(4)] Each proper subalgebra of $\ggg$ is contained in a subalgebra of codimension one of $\ggg$.
\end{itemize}
\end{lemma}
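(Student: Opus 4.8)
The plan is to prove the four statements of Lemma \ref{lem: 5.1} in turn, relying on the standard structure theory of completely solvable Lie algebras adapted to the super setting, using the fact that $\ggg^{(1)}$ is nilpotent and $\ggg_\bz$ is completely solvable.

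For part (1), let $I$ be a minimal ideal of $\ggg$. Then $[\ggg,I]$ is an ideal contained in $I$, so either $[\ggg,I]=I$ or $[\ggg,I]=0$. I would first treat the case $I \subseteq C(\ggg)$: then $I$ is an abelian ideal on which $\ggg$ acts trivially via the adjoint representation, hence every subspace of $I$ is an ideal of $\ggg$, forcing $\dim_\bk I = 1$ by minimality. Otherwise, consider $I$ as a $\ggg$-module under the adjoint action; since $\ggg^{(1)}$ is nilpotent, $\ggg^{(1)}$ acts nilpotently on $I$ (as $[\ggg^{(1)}, I] \subseteq I \cap \ggg^{(1)}$ and one iterates). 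By an Engel-type / Lie-type argument for completely solvable Lie superalgebras — applying Lemma \ref{lem: basic lem}(1) to the irreducible $\ggg$-module obtained from $I$ (or a minimal $\ggg$-submodule of $I$), noting $[\ggg,\ggg]$ acts nilpotently on it because $\ggg^{(1)}$ is nilpotent — one concludes that minimal submodule is one-dimensional; minimality of the ideal $I$ then gives $\dim_\bk I = 1$. The point where I expect to spend the most care is verifying the super Lie's theorem ingredient: that a completely solvable Lie superalgebra acting on a nonzero finite-dimensional module with $\ggg^{(1)}$ acting nilpotently has a one-dimensional submodule. This is precisely the content flagged in Lemma \ref{lem: basic lem}(1), so I would invoke it directly after checking its hypothesis holds for the module $\ggg$ gives on a minimal $\ggg$-submodule of $I$.

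Part (2) then follows by an immediate induction on $\dim_\bk \ggg$. If $\ggg = 0$ there is nothing to prove. Otherwise, by part (1) choose a one-dimensional ideal $\ggg_{n-1}$ of $\ggg$; the quotient $\bar\ggg = \ggg/\ggg_{n-1}$ is again completely solvable (the derived subalgebra of a quotient is a quotient of $\ggg^{(1)}$, hence nilpotent) of dimension $n-1$, so by induction it has a chain $\bar\ggg = \bar\ggg_0 \supset \cdots \supset \bar\ggg_{n-1} = 0$ with $\dim_\bk \bar\ggg_i = n-1-i$; pulling back along the quotient map and appending $\ggg_n = 0$ gives the desired chain $\ggg = \ggg_0 \supset \cdots \supset \ggg_{n-1} \supset \ggg_n = 0$. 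One should note each $\ggg_i$ is a genuine (graded) ideal since one-dimensional ideals in the super setting are $\bbz_2$-homogeneous — either purely even or purely odd — and preimages of ideals are ideals.

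For part (3), recall that $D(\hhh,\chi)$ consists of subalgebras $\kk \subseteq \hhh$ with $\underline\dim \kk = d(\hhh,\chi|_{\hhh_\bz})$ and $\chi(\kk^{(1)}) = 0$; I must show such a $\kk$ lies in $D(\ggg,\chi)$, i.e. that $\underline\dim \kk = d(\ggg,\chi)$. Since $\hhh \supseteq \zzz^\chi$ has codimension one in $\ggg$, a dimension bookkeeping using Lemma \ref{lem: non-deg bil forms} is needed: I would compare $\zzz^{\chi}_\hhh := \ker(\tsb_\chi|_{\hhh})$ with $\zzz^\chi$, observe $\zzz^\chi \subseteq \zzz^\chi_\hhh$, and analyze whether the missing dimension of $\ggg$ over $\hhh$ is even or odd and whether it lands in $\zzz^\chi_\hhh$ or not; the restriction of the nondegenerate form on $\tilde\ggg$ to the codimension-one subspace $\tilde\hhh$ has a radical of dimension $0$ or $1$ (or $2$), and working this through the formula $d(\ggg,\chi) = (\tfrac{\dim\ggg_\bz + \dim\zzz^\chi_\bz}{2} \mid \lceil\tfrac{\dim\ggg_\bo + \dim\zzz^\chi_\bo}{2}\rceil)$ against the corresponding formula for $\hhh$ shows the two target super-dimensions agree, because the $-1$ in the odd codimension case is absorbed by the ceiling. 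The condition $\chi(\kk^{(1)}) = 0$ is inherited unchanged since $\kk \subseteq \hhh \subseteq \ggg$. Finally, part (4) is elementary: given a proper subalgebra $\aaa \subsetneq \ggg$, the quotient module $\ggg/\aaa$ (under the adjoint action of $\aaa$, or rather one passes to the normalizer/uses solvability) is handled by taking an ideal chain through $\ggg^{(1)}$-stable complements — concretely, since $\ggg$ is solvable one can find a subalgebra $\bbb$ with $\aaa \subseteq \bbb \subsetneq \ggg$ and $\dim_\bk(\ggg/\bbb) = 1$ by an induction: if $\aaa$ already has codimension one we are done; otherwise $\aaa$ does not contain $\ggg^{(1)}$, so pick a codimension-one ideal or subalgebra of $\ggg$ containing $\aaa + \ggg^{(1)}$ when $\aaa \not\supseteq \ggg^{(1)}$, or when $\aaa \supseteq \ggg^{(1)}$ use that $\ggg/\ggg^{(1)}$ is abelian so any codimension-one subspace containing $\aaa/\ggg^{(1)}$ pulls back to a codimension-one subalgebra. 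I expect part (3) to be the main obstacle, since it requires the delicate parity/ceiling dimension count; parts (1), (2), (4) are standard inductions once Lemma \ref{lem: basic lem}(1) is in hand.
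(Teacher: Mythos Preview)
Your treatment of parts (1), (2), and (3) follows essentially the same line as the paper. For (1) you invoke Lemma~\ref{lem: basic lem}(1) on the minimal ideal viewed as an irreducible $\ggg$-module on which $\ggg^{(1)}$ acts nilpotently; for (2) you induct using the one-dimensional ideal from (1); and for (3) you do the same super-dimension bookkeeping comparing $d(\hhh,\chi)$ with $d(\ggg,\chi)$ via the formula in Remark~\ref{rem: 2.2}. The paper argues (3) in exactly this way, splitting into the two cases $\underline{\dim}\,\ggg/\hhh=(1|0)$ and $(0|1)$ and using that the relevant entries are integers squeezed between two values differing by $\tfrac12$; your remark that the radical of the restricted form has dimension $0$, $1$, or $2$ is a correct refinement of the same count.

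Part (4), however, has a genuine gap, and your approach differs from the paper's. You split into the cases $\aaa\supseteq\ggg^{(1)}$ (where the abelian-quotient argument is fine) and $\aaa\not\supseteq\ggg^{(1)}$, where you propose to find a codimension-one subalgebra containing $\aaa+\ggg^{(1)}$. But $\aaa+\ggg^{(1)}$ can equal $\ggg$ even when $\aaa$ has codimension greater than one: for instance take $\ggg=\bk x\oplus\bk y\oplus\bk z$ with $[x,y]=y$, $[x,z]=z$, $[y,z]=0$, and $\aaa=\bk x$; then $\ggg^{(1)}=\bk y\oplus\bk z$ and $\aaa+\ggg^{(1)}=\ggg$, yet $\aaa$ has codimension two. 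Your sketch offers no mechanism for this case. The paper avoids this by a different induction: it uses the one-dimensional ideal $\ggg_{n-1}$ supplied by part (2), forms $\aaa+\ggg_{n-1}$ (which is automatically proper when $\aaa$ has codimension $>1$, since $\ggg_{n-1}$ is one-dimensional), and passes to the quotient $\ggg/\ggg_{n-1}$, which is again completely solvable of smaller dimension. Replacing your $\ggg^{(1)}$-based case split with this induction on $\dim_\bk\ggg$ via a minimal ideal closes the gap cleanly.
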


\begin{proof}  By definition $\ggg^{(1)}$ is nilpotent, it acts nilpotently on $\ggg$ under $\ad$-action. By Lemma \ref{lem: basic lem}(1), The part (1) follows.

As to the part (2),  by (1) there must be minimal ideals of $\ggg$ which is  $\bbz_2$-homogeneous and one-dimensional.
 Notice that any subalgebras and quotients of $\ggg$ are also completely solvable Lie superalgebras. By induction on dimension, the statement follows.

As to (3), we are given $\mmm\in D(\hhh,\chi)$, intending to show $\mmm\in D(\ggg,\chi)$. We first note that   $\underline{\dim}\mmm=d(\hhh,\chi)$.
As $\hhh$ has codimension one, either $\hhh_\bz$ has codimension one in $\ggg_\bz$ while $\hhh_\bo=\ggg_\bo$, or $\hhh_\bo$ has codimension one in $\ggg_\bo$ while $\hhh_\bz=\ggg_\bz$.
For the first case,  $\dim\hhh_\bz=\dim\ggg_\bz-1$. We have that the even entry of the super-datum $d(\hhh,\chi)$ is not less than $\frac{\dim\ggg_\bz-1+\dim \zzz_\bz^\chi}{2}=\frac{\dim\ggg_\bz+\dim \zzz_\bz^\chi}{2}-\frac{1}{2}$, therefore both are equal because both are integers and $\frac{\dim\ggg_\bz+\dim \zzz_\bz^\chi}{2}$ is already an integer. The odd entries  of $d(\hhh,\chi)$ and of $d(\ggg,\chi)$ coincide. So in this case, $d(\hhh,\chi)=d(\ggg,\chi)$.

For the second case,  $\dim\hhh_\bo=\dim\ggg_\bo-1$ while $\hhh_\bz=\ggg_\bz$.
We can  show by similar arguments as in the first case, that $d(\hhh,\chi)=d(\ggg,\chi)$ when $\frac{\dim\ggg_\bo+\dim \zzz_\bo^\chi}{2}$ is an integer.
Suppose $\frac{\dim\ggg_\bo+\dim \zzz_\bo^\chi}{2}$ is not an integer.
Then the odd entries of $d(\hhh, \chi)$ is equal to $\frac{\dim\hhh_\bo+\dim \zzz_\bo^\chi}{2}=\frac{\dim\ggg_\bo-1+\dim \zzz_\bo^\chi}{2}$, which is exactly  $\lceil  \frac{\dim\ggg_\bo+\dim \zzz_\bo^\chi}{2}\rceil$, equal to the odd entry of $d(\ggg,\chi)$. And the even entries of $d(\hhh,\chi)$ and of $d(\ggg,\chi)$ coincide already. Hence  $d(\hhh,\chi)=d(\ggg,\chi)$ in this case. Hence,  we always have $D(\hhh,\chi)\subset D(\ggg,\chi)$ in any case.
The proof of (3) is completed.

Now we prove (4) by induction on dimension.
Suppose $\hhh$ is a proper subalgebra of $\ggg$, and has codimension greater than one. Note that $\ggg_{n-1}$ is an ideal of dimension one. So $\hhh+\ggg_{n-1}$ must be a proper subalgebra of $\ggg$. If $\hhh+\ggg_{n-1}$ has codimension one, then we are done. If it has codimension greater than one, we consider $\phi: \ggg\rightarrow \overline\ggg:=\ggg\slash\ggg_{n-1}$. Then $\phi (\hhh+\ggg_{n-1})$ has codimension greater than one in $\overline\ggg$. Now $\overline{\ggg}$ has dimension less than $\dim_\bk \ggg$.
 By the inductive hypothesis, $\phi(\hhh+\ggg_{n-1})$ is contained in a subalgebra say $\frakp$, of codimension one in $\overline{\ggg}$.
We take  $\phi^{-1}(\frakp)$ the preimage of $\frakp$ in $\ggg$. Then $\phi^{-1}(\frakp)$ is a subalgebra of codimension one in $\ggg$, containing $\hhh$. This subalgebra is desired.
\end{proof}

\begin{corollary}\label{coro: 5.2}
For a completely solvable Lie superalgebra $\ggg$, the set $D(\ggg,\chi)$ is not empty.
\end{corollary}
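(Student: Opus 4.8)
The plan is to prove Corollary~\ref{coro: 5.2} by induction on $\dim_\bk\ggg$ (uniformly in $\chi$), each inductive step cutting $\ggg$ down to a codimension-one subalgebra and transporting degraded subalgebras back up by means of Lemma~\ref{lem: 5.1}(3)--(4).

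First I would dispose of the degenerate case in which the skew form $\tsb_\chi$ vanishes identically, equivalently $\chi([\ggg,\ggg])=0$ (this includes $\ggg$ abelian, and in particular the base case $\dim_\bk\ggg=0$). Here $\zzz^\chi=\ggg$, so $d(\ggg,\chi)=\underline\dim\ggg$, and $\ggg$ itself satisfies $\underline\dim\ggg=d(\ggg,\chi)$ together with $\chi(\ggg^{(1)})=0$; hence $\ggg\in D(\ggg,\chi)$ and we are done in this case.

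For the inductive step, assume $\chi([\ggg,\ggg])\ne0$. Then $\zzz^\chi=\ker(\tsb_\chi)$ is a \emph{proper} subalgebra of $\ggg$; that it is a subalgebra is a one-line check with the graded Jacobi identity (for homogeneous $X,Y\in\zzz^\chi$ and any $Z$, $\chi([[X,Y],Z])=\chi([X,[Y,Z]])-(-1)^{|X||Y|}\chi([Y,[X,Z]])=0$). Applying Lemma~\ref{lem: 5.1}(4) to the proper subalgebra $\zzz^\chi$ produces a subalgebra $\hhh$ of codimension one in $\ggg$ with $\zzz^\chi\subseteq\hhh$. Since $\hhh^{(1)}\subseteq\ggg^{(1)}$ and $\ggg^{(1)}$ is nilpotent, $\hhh^{(1)}$ is nilpotent, so $\hhh$ is again a completely solvable Lie superalgebra, and $\dim_\bk\hhh=\dim_\bk\ggg-1<\dim_\bk\ggg$; by the inductive hypothesis $D(\hhh,\chi|_{\hhh_\bz})\ne\emptyset$. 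Finally, because $\hhh$ is a codimension-one subalgebra containing $\zzz^\chi$, Lemma~\ref{lem: 5.1}(3) gives $D(\hhh,\chi)\subseteq D(\ggg,\chi)$, whence $D(\ggg,\chi)\ne\emptyset$.

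The induction scheme itself is painless; the only real content --- and the sole place where complete solvability is genuinely used --- is the existence of a codimension-one subalgebra containing $\zzz^\chi$, which is exactly Lemma~\ref{lem: 5.1}(4) (resting in turn on parts (1)--(2), i.e.\ on minimal ideals of a completely solvable Lie superalgebra being one-dimensional). The other essential ingredient is Lemma~\ref{lem: 5.1}(3), which guarantees that the target super-dimension $d(-,\chi)$ is unchanged on passing from $\ggg$ to $\hhh$, so that a degraded subalgebra of $\hhh$ stays degraded in $\ggg$; beyond that, everything is routine bookkeeping (subalgebras of completely solvable superalgebras are completely solvable, $\zzz^\chi$ is a proper subalgebra, and so on).
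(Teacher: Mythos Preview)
Your proof is correct and uses essentially the same ingredients as the paper's, namely Lemma~\ref{lem: 5.1}(3) and~(4): you frame it as an explicit induction on $\dim_\bk\ggg$, descending one codimension at a time, whereas the paper descends in one shot to a minimal subalgebra $\frakp\supseteq\zzz^\chi$ of super-dimension $d(\ggg,\chi)$ and argues directly that $\frakp$ itself is isotropic, hence lies in $D(\frakp,\chi)\subset D(\ggg,\chi)$. The difference is purely presentational.
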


\begin{proof} If $\zzz^\chi=\ggg$, there is nothing to do because  $\ggg$ itself belongs to $D(\ggg,\chi)$.  So we only need to consider the situation where $\zzz^\chi$ is a proper subalgebra of $\ggg$ in the following.

 By Lemma \ref{lem: 5.1}, it is easily deduced that there are minimal subalgebras in $\ggg$ containing $\zzz^\chi$  and having super-dimension not less than $d(\ggg,\chi)$. We take one, say $\frakp$.
By Lemma \ref{lem: 5.1} again, it is deduced that $D(\frakp,\chi)\subset D(\ggg,\chi)$.
We will show that this $\frakp$ exactly belongs to $D(\frakp,\chi)$, therefore belongs to $D(\ggg,\chi)$.

(1) By applying  Lemma \ref{lem: 5.1}(4) to the completely solvable superalgebra $\frakp$, it is concluded that the proper subalgebra $\zzz^\chi$ is contained in a subalgebra of codimension one in $\frakp$. Hence, the minimality of $\frakp$ yields that $\underline{\dim}\frakp=d(\ggg,\chi)$.

(2) By the above arguments, in $\frakp_\bz$ (resp. $\frakp_\bo$) the maximal isotropic space has  dimension equal to $\dim\frakp_\bz$ (resp. $\dim\frakp_\bo$). Hence $\frakp$ itself is isotropic, which means
$\chi(\frakp^{(1)})=0$. Hence $\frakp\in D(\frakp,\chi)\subset D(\ggg,\chi)$.

Thus $D(\ggg,\chi)$ is not empty. The proof is completed.
\end{proof}

\subsection{} By Corollary \ref{coro: 5.2}, there  exists $\hhh\in D(\ggg,\chi)$ for any given $\chi\in \ggg_\bz^*$.
\begin{prop}\label{prop: 5.4} Let $\ggg=\ggg_\bz+\ggg_\bo$ be a completely solvable  Lie superalgebra, and $\chi\in \ggg_\bz^*$ given. Then there is an irreducible module $V$ of $\ggg$ such that $\dim V=p^{\frac{b_0^\chi}{2}}2^{\lfloor\frac{b_1^\chi}{2}\rfloor}$.
\end{prop}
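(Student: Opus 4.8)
The plan is to construct the desired irreducible module as an induced module from a one-dimensional representation of a degraded subalgebra $\hhh\in D(\ggg,\chi)$, whose existence is guaranteed by Corollary \ref{coro: 5.2}. First I would fix such an $\hhh$, so that $\underline\dim\hhh=d(\ggg,\chi)$ and $\chi(\hhh^{(1)})=0$; in particular $\hhh$ is a restricted subalgebra containing $\zzz^\chi$. Since $\chi$ vanishes on $\hhh^{(1)}=[\hhh,\hhh]$, the linear functional $\chi|_{\hhh_\bz}$ (together with an appropriate choice making the $p$-th power equation $\lambda(H)^p-\lambda(H^{[p]})=\chi(H)^p$ solvable — this uses algebraic closedness of $\bk$, exactly as in the classical solvable case) gives rise to a one-dimensional $U_\chi(\hhh)$-module $S$. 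Then I would form $\calv:=U_\chi(\ggg)\otimes_{U_\chi(\hhh)}S$, whose dimension is $p^{\dim\ggg_\bz-\dim\hhh_\bz}2^{\dim\ggg_\bo-\dim\hhh_\bo}=p^{b_0^\chi/2}2^{\lceil b_1^\chi/2\rceil}$ by the formula for $d(\ggg,\chi)$ in Remark \ref{rem: 2.2}.

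The subtlety is that $\lceil b_1^\chi/2\rceil$ need not equal $\lfloor b_1^\chi/2\rfloor$: when $b_1^\chi=\dim\ggg_\bo-\dim\zzz_\bo^\chi$ is odd, the induced module $\calv$ is a factor of $2$ too large and is therefore \emph{not} irreducible. So the second step, in the odd case, is to analyze the structure of $\calv$ and show it has an irreducible quotient (or submodule) of dimension exactly $p^{b_0^\chi/2}2^{\lfloor b_1^\chi/2\rfloor}$. The natural device is to enlarge $\hhh$ by one odd dimension: since $\dim\ggg_\bo-\dim\hhh_\bo$ is odd and positive, one picks $f\in\ggg_\bo\setminus\hhh_\bo$ and considers the action on $\calv$ of $f$ and of a conjugate odd element; the Clifford-type relation coming from $\tsb_\chi$ on the one-dimensional quotient $\tilde\ggg_\bo$ of odd codimension forces an extra relation that cuts the dimension in half. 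Concretely, one should produce an odd operator on $\calv$ whose square is a nonzero scalar, diagonalize it, and take an eigenspace; this is the odd analogue of the "last generator" argument and is where the floor function enters. Alternatively, one can choose $\hhh$ one dimension larger on the odd side to begin with (so that $\underline\dim\hhh$ records $\lfloor\cdot\rfloor$ rather than $\lceil\cdot\rceil$) at the cost of $\chi$ no longer vanishing on $\hhh^{(1)}$, and then twist; but I expect the cleaner route is to keep $\hhh$ degraded and cut $\calv$ down.

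Next I would verify irreducibility (of $\calv$ in the even-$b_1^\chi$ case, or of the half-dimensional quotient in the odd case). This should follow the pattern of Lemma \ref{lem: key lemma}: a nonzero $U_\chi(\ggg)$-submodule $W$ meets the "bottom layer" $1\otimes S$ nontrivially by the degree-reduction operators $Z_i-\chi(Z_i)$ and $T_j$ acting as in formula (\ref{eq: degree redu}), hence contains all of $1\otimes S$, hence is everything. One must check that the required dual elements $Z_i,T_j$ exist, i.e. that the bilinear form $\tsb_\chi$ restricted to a complement of $\hhh$ in $\ggg$ is nondegenerate — but this is precisely the content of $\hhh$ being a \emph{maximal} isotropic subspace containing $\zzz^\chi$, which is built into the definition of $D(\ggg,\chi)$ via Lemma \ref{lem: non-deg bil forms}. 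Finally, combining with Corollary \ref{coro: 4.6} (every irreducible module of the solvable restricted $\ggg$ has dimension $p^m2^n$) and the Kac–Weisfeiler-type lower bound — every irreducible $U_\chi(\ggg)$-module has dimension divisible by $p^{b_0^\chi/2}2^{\lfloor b_1^\chi/2\rfloor}$ — shows that the module just constructed realizes the bound. The main obstacle is the odd-codimension parity correction: making rigorous the Clifford relation that halves the dimension and checking the resulting eigenspace is still $U_\chi(\ggg)$-stable and irreducible.
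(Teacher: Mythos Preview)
Your proposal contains a concrete arithmetic slip and two genuine gaps.

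\textbf{The dimension computation is wrong, and this unravels half of your plan.} For $\hhh\in D(\ggg,\chi)$ one has $\dim\hhh_\bo=\lceil(\dim\ggg_\bo+\dim\zzz_\bo^\chi)/2\rceil$, hence
\[
\dim\ggg_\bo-\dim\hhh_\bo=\Big\lfloor\frac{\dim\ggg_\bo-\dim\zzz_\bo^\chi}{2}\Big\rfloor=\Big\lfloor\frac{b_1^\chi}{2}\Big\rfloor,
\]
not $\lceil b_1^\chi/2\rceil$. (This is exactly the statement $i(\ggg,\chi)=\underline\dim\ggg-d(\ggg,\chi)$ in Remark~\ref{rem: 2.2}.) So the induced module $\calv=U_\chi(\ggg)\otimes_{U_\chi(\hhh)}S$ already has dimension $p^{b_0^\chi/2}2^{\lfloor b_1^\chi/2\rfloor}$; there is no factor of $2$ to remove, and the entire Clifford--eigenspace discussion is moot.

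\textbf{The irreducibility argument does not transfer from Lemma~\ref{lem: key lemma}.} The degree-reduction operators $Z_i-\chi(Z_i)$ and $T_j$ in (\ref{eq: degree redu}) work because $Z_i,T_j$ lie in an \emph{ideal} $I$: each commutator $[e_k,Z_i]$ (and iterates) stays in $I$ and therefore acts on $V_{I,\chi}$ by the scalar $\chi([e_k,Z_i])$. For an arbitrary degraded subalgebra $\hhh$ there is no ideal in sight; even if you locate $Z_i\in\hhh$ with $\chi([e_j,Z_i])=\delta_{ij}$, the commutator $[e_k,Z_i]$ need not lie in $\hhh$, so it does not act by a scalar on $1\otimes S$ and the recursion in the proof of Lemma~\ref{lem: key lemma} breaks down. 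Maximality of $\hhh$ as an isotropic subspace gives you the dual elements but not the closure under brackets that the argument needs.

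\textbf{The lower bound you invoke is unavailable.} The ``Kac--Weisfeiler-type lower bound'' (divisibility of $\dim V$ by $p^{b_0^\chi/2}2^{\lfloor b_1^\chi/2\rfloor}$) is proved in the paper only for basic classical Lie superalgebras; for solvable ones it is merely proposed as a possibility (see the remark following Theorem~\ref{lem: general solv}). You cannot use it here.

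\textbf{How the paper proceeds instead.} The paper does not attempt to prove $\calv$ irreducible directly. It argues by induction on $\dim\ggg$: take $\hhh\in D(\ggg,\chi)$, embed $\hhh$ in a codimension-one subalgebra $\frakp$ via Lemma~\ref{lem: 5.1}(4), note $D(\frakp,\chi)\subset D(\ggg,\chi)$ by Lemma~\ref{lem: 5.1}(3), and apply the inductive hypothesis to obtain an irreducible $\frakp$-module $W$ of the correct dimension. Then choose an irreducible $\ggg$-module $V$ containing $W$ as a $\frakp$-submodule and compute $\dim V$ using Proposition~\ref{prop: 4.5} together with the codimension-one relation between $\frakp$ and $\ggg$. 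This sidesteps both the irreducibility of a specific induced module and any appeal to a divisibility bound.
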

\begin{proof} By Corollary \ref{coro: 5.2}, there  exists $\hhh\in D(\ggg,\chi)$.  If $\ggg$ coincides with $\hhh$, then all irreducible modules are one-dimensional. we are done.

In the following, we suppose that $\hhh$ is a proper subalgebra of $\ggg$. We prove the statement by induction on $\dim \ggg$.
By Lemma \ref{lem: 5.1}, $\hhh$ is contained in a subalgebra $\frakp$ of codimension one in $\ggg$, and $D(\frakp,\chi)\subset D(\ggg,\chi)$. By the inductive hypothesis along with Remark \ref{rem: 2.2}(1), there is an irreducible $\frakp$-module $W$ with
\begin{align}\label{eq: induction dim}
\dim W=\begin{cases} &p^{\frac{b_0^\chi}{2}-1}2^{\lfloor\frac{b_1^\chi}{2}\rfloor}\text{  if } \underline{\dim} \frakp=(\dim\ggg_\bz-1|\dim\ggg_\bo);\cr
&p^{\frac{b_0^\chi}{2}}2^{\lfloor\frac{b_1^\chi}{2}\rfloor-1} \text{ if } \underline{\dim} \frakp=(\dim\ggg_\bz|\dim\ggg_\bo-1).
\end{cases}
 \end{align}
 There certainly exists an irreducible module $V$ of $\ggg$ such that $V$ contains an irreducible $\frakp$-submodule isomorphic to $W$.
 We claim that
$\dim V=p^{\frac{b_0^\chi}{2}}2^{\lfloor\frac{b_1^\chi}{2}\rfloor}$.

Actually,  by Proposition \ref{prop: 4.5} and its proof, there is a subalgebra $\calh$ such that $\chi(\calh^{(1)})=0$ and the dimensions of $W$ and of $V$ are respectively formulated as  $\dim V=p^{\dim\ggg_\bz\slash \calh_\bz}2^{\dim \ggg_\bo\slash \calh_\bo}$, and  $\dim W=p^{\dim\frakp_\bz\slash \calh'_\bz}2^{\dim \frakp_\bo\slash \calh'_\bo}$,  where $\calh'_\bz=\ggg_\bz\cap\calh_\bz$, $\calh'_\bo=\ggg_\bo\cap\calh_\bo$. Note that $(\frakp_i+\calh_i\slash\calh_i)\cong
\frakp_i\slash \frakp_i\cap\calh_i$,  $\forall i\in\bbz_2$, as vector spaces.
From (\ref{eq: induction dim}), it yields that $\dim V=p^{\frac{b_0^\chi}{2}}2^{\lfloor\frac{b_1^\chi}{2}\rfloor}$.
\end{proof}

 \subsection{} From now on we turn to the situation of completely solvable restricted Lie superalgebras.

\begin{prop}\label{prop: 5.3} Let $\ggg$ be a completely solvable restricted Lie superalgebras with $p$-mapping $[p]$ on $\ggg_\bz$, and $\chi \in\ggg^*_\bz$ any given $p$-character. Then the following statements hold.
\begin{itemize}
\item[(1)] Each irreducible $U_\chi(\ggg)$-module $V$ is associated with certain subalgebra $\hhh$ with $\chi(\hhh^{(1)})=0$, such that $V$ has dimension $$p^{\dim\ggg_\bz\slash\hhh_\bz}2^{\dim\ggg_\bo\slash\hhh_\bo}$$
    and there is a one-dimensional $\hhh$-submodule in $V$.
    \item[(2)] All irreducible modules of $U_\chi(\ggg)$ have the same dimension.
\end{itemize}
\end{prop}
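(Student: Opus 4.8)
The plan: part (1) is essentially immediate from Corollary~\ref{coro: 4.6}, while part (2) will be deduced from a sharper statement proved by induction on $\dim_\bk\ggg$, namely that \emph{every} irreducible $U_\chi(\ggg)$-module has dimension $p^{\frac{b_0^\chi}{2}}2^{\lfloor\frac{b_1^\chi}{2}\rfloor}$. For part (1): a completely solvable restricted Lie superalgebra is solvable and restricted, so Corollary~\ref{coro: 4.6} gives $V\cong U_\chi(\ggg)\otimes_{U_\chi(\hhh)}S$ for a restricted subalgebra $\hhh$ with $\chi(\hhh^{(1)})=0$ and a one-dimensional $U_\chi(\hhh)$-module $S$; comparing PBW bases gives $\dim V=p^{\dim\ggg_\bz-\dim\hhh_\bz}2^{\dim\ggg_\bo-\dim\hhh_\bo}$, and the image of $1\otimes S$ is the required one-dimensional $\hhh$-submodule of $V$.

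For the induction in part (2), let $V$ be an irreducible $U_\chi(\ggg)$-module and write $\rho$ for the representation. If $V$ is one-dimensional, the decomposition of part (1) forces $\hhh=\ggg$ by the dimension formula, hence $\chi(\ggg^{(1)})=0$; then $\tsb_\chi\equiv 0$ on $\ggg$, so $\zzz^\chi=\ggg$, $b_0^\chi=b_1^\chi=0$, and indeed $\dim V=1=p^{0}2^{0}$. If $\dim V>1$, then $\ggg/\ker\rho$ is not abelian, so — exactly as in the proof of Proposition~\ref{prop: 4.5}, using Lemma~\ref{lem: basic lem}(2) and Lemma~\ref{lem: key lemma} — there is an ideal $I\lhd\ggg$ with $I^{(1)}\subseteq\ker\rho$, $\chi(I^{(1)})=0$ and $[\ggg,I]\nsubseteq\ker\rho$, together with the proper restricted subalgebra $I^\chi=\{X\in\ggg\mid\chi([X,I])=0\}$ and an isomorphism $V\cong U_\chi(\ggg)\otimes_{U_\chi(I^\chi)}V_{I,\chi}$, where $V_{I,\chi}$ is an irreducible $U_\chi(I^\chi)$-module. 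Since $I^\chi$ is a restricted subalgebra of a completely solvable superalgebra it is again completely solvable, and $\dim_\bk I^\chi<\dim_\bk\ggg$; so the inductive hypothesis yields $\dim V_{I,\chi}=p^{\dim I^\chi_\bz-d(I^\chi,\chi)_\bz}2^{\dim I^\chi_\bo-d(I^\chi,\chi)_\bo}$, where $d(I^\chi,\chi)$ denotes the maximal isotropic super-dimension of $I^\chi$ with respect to $\tsb_\chi$ in the sense of Remark~\ref{rem: 2.2} (and $\underline{\dim}I^\chi-d(I^\chi,\chi)=i(I^\chi,\chi)$).

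The crucial point is then the identity $d(I^\chi,\chi)=d(\ggg,\chi)$ (as super-data). Since $\chi$ vanishes on $\ggg_\bo$, one checks $I^\chi_\bz=\{X\in\ggg_\bz\mid\tsb_\chi(X,I_\bz)=0\}$ and $I^\chi_\bo=\{Y\in\ggg_\bo\mid\tsb_\chi(Y,I_\bo)=0\}$, i.e.\ $I^\chi_\bz$ (resp.\ $I^\chi_\bo$) is the $\tsb_\chi$-orthogonal of $I_\bz$ (resp.\ $I_\bo$) in $\ggg_\bz$ (resp.\ $\ggg_\bo$); and $\chi(I^{(1)})=0$ makes $I_\bz$ and $I_\bo$ isotropic subspaces, each contained in its own orthogonal. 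For any bilinear form on a space $W$ and an isotropic subspace $U\subseteq U^{\perp}$, a maximal isotropic subspace $L$ of $U^{\perp}$ automatically contains $U$ and is maximal isotropic in all of $W$ (if $v\in W\setminus L$ with $L+\bk v$ isotropic, then $\tsb_\chi(v,U)\subseteq\tsb_\chi(v,L)=0$, so $v\in U^{\perp}$, contradicting maximality of $L$ in $U^{\perp}$); since over $\bk$ all maximal isotropic subspaces of a fixed form have the same dimension, Lemma~\ref{lem: 2.1}(3)--(4) applied to both $I^\chi$ and $\ggg$ then gives that the maximal isotropic dimensions of $I^\chi_\bz$ and $\ggg_\bz$ coincide, and likewise in the odd parts, i.e.\ $d(I^\chi,\chi)=d(\ggg,\chi)$. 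Combining this with $\underline{\dim}\ggg-d(\ggg,\chi)=i(\ggg,\chi)=(\tfrac{b_0^\chi}{2}\mid\lfloor\tfrac{b_1^\chi}{2}\rfloor)$ we obtain
$$\dim V=p^{\dim\ggg_\bz-\dim I^\chi_\bz}2^{\dim\ggg_\bo-\dim I^\chi_\bo}\cdot\dim V_{I,\chi}=p^{\dim\ggg_\bz-d(\ggg,\chi)_\bz}2^{\dim\ggg_\bo-d(\ggg,\chi)_\bo}=p^{\frac{b_0^\chi}{2}}2^{\lfloor\frac{b_1^\chi}{2}\rfloor},$$
which completes the induction and hence proves (2); it also shows the $\hhh$ in part (1) may always be chosen of maximal isotropic super-dimension $d(\ggg,\chi)$.

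I expect the main obstacle to be exactly this dimension bookkeeping of the inductive step — checking that the super-codimension of $I^\chi$ in $\ggg$ and the inductive dimension of $V_{I,\chi}$ over $I^\chi$ add up to $i(\ggg,\chi)$ — which I have reduced to the orthogonal-of-an-isotropic observation and the identity $d(I^\chi,\chi)=d(\ggg,\chi)$. Two points that need care are that $\tsb_\chi$ is \emph{symmetric} (not skew-symmetric) on the odd part, so that the floor/ceiling in Lemma~\ref{lem: 2.1}(4) behaves as stated, and that one genuinely has $\chi(I^{(1)})=0$, so that $I_\bz$ and $I_\bo$ really are $\tsb_\chi$-isotropic — the latter being the fact already recorded in the proof of Proposition~\ref{prop: 4.5}.
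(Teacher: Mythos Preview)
Your argument is correct, and it takes a genuinely different route from the paper's proof of part~(2). The paper does not compute the dimension inductively; instead it adapts Schue's classical method: first it shows that if $C(\ggg)_\bz^{[p]}=0$ then every irreducible restricted module is one-dimensional, then for any two irreducibles $W_1,W_2$ of $U_\chi(\ggg)$ with the same central character on $C(\ggg)_\bz$ it endows $\Hom_\bk(W_1,W_2)$ with a restricted $\ggg$-module structure on which $C(\ggg)_\bz$ acts $p$-nilpotently, extracts a one-dimensional submodule $\bk\nu$, and reads off $W_1\cong W_2\otimes\bk_{-\lambda}$; finally a twist of the $p$-mapping to force $C(\ggg)_\bz^{[p]'}=0$ reduces the general case to this one. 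This yields only ``all irreducibles have equal dimension''; the actual value $p^{b_0^\chi/2}2^{\lfloor b_1^\chi/2\rfloor}$ is obtained separately in Proposition~\ref{prop: 5.4} by constructing one such module.

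Your approach is more economical: the linear-algebra identity $d(I^\chi,\chi)=d(\ggg,\chi)$ (which you correctly deduce from the fact that a maximal isotropic subspace of $U^\perp$, for $U$ isotropic, is already maximal isotropic in the ambient space, together with Witt's theorem for the symmetric form on $\ggg_\bo$) makes the induction close immediately and delivers the exact dimension at the same time, so that Proposition~\ref{prop: 5.4} becomes superfluous for the restricted case. The paper's route, on the other hand, isolates a structural statement (equal dimensions via the $\Hom$-module trick and $p$-mapping modification) that is of independent interest and mirrors the classical Lie-algebra arguments in \cite{Sch} and \cite[\S5.8]{SF}.
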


\begin{proof} (1) follows from Theorem \ref{lem: general solv}.

For (2),  we first notice that the even center $C(\ggg)_\bz$ is an ideal of $\ggg$,  and $C(\ggg)_\bz=C(\ggg)\cap \ggg_\bz\subset C(\ggg_\bz)$.
 Then we prove the proposition by  different steps.

(2.i) We claim that if $C(\ggg)_\bz^{[p]}=0$, then
every irreducible restricted representation of $\ggg$ is one-dimensional. Note that by definition $\ggg^{(1)}$ is nilpotent, acting nilpotently on $\ggg$. Hence there exists  a positive integer $k$ such that $X^{[p]^k}\in C(\ggg)_\bz$ for all $X\in \ggg_\bz$. The assumption that  $C(\ggg)_\bz^{[p]}=0$ entails that $\ggg_\bz^{(1)}\subset \text{rad}_p(\ggg_\bz)$ where $\text{rad}_p(\ggg_\bz)$ denotes $p$-radical which is the set of $X\in\ggg_\ev$ with $X^{[p]^n}=0$ for some positive integer $n\in \bbn$ (here $n$ is dependent on $X$). By the same arguments as in the proof \cite[Lemma 5.8.6(1)]{SF}, one has $\ggg_\bz=T\oplus \text{rad}_p(\ggg_\bz)$ where $T$ is a maximal torus of $\ggg_\bz$. In particular, $C(\ggg)_\bz\subset \text{rad}_p(\ggg_\bz)$, and  $\ggg^{(1)}\subset \text{rad}_p(\ggg_\bz)+\ggg_\bo$
    while  $[\ggg_\bo,\ggg_\bo]\subset \text{rad}_p(\ggg_\bz)$. Hence $\ggg^{(1)}\subset \text{rad}(\ggg)$.
For any irreducible restricted representation $\rho: \ggg\rightarrow \mathfrak{gl}(V)$,  $\rho(\text{rad}_p(\ggg_\bz))$ acts nilpotently on $V$, therefore acts trivially on $V$. So $\rho(\ggg^{(1)})$ acts trivially on $V$. By Lemma \ref{lem: moving}, $V$ is one-dimensional, which is of parity $0$\footnote{For representation categories of $U_\chi(\ggg)$, one can give parities arising from the given parity of homogeneous  generating space to \cite[Section 6]{CLW}.}. This $V$ is completely decided by a function $\lambda$  with $\lambda(\ggg^{(1)})=0$ and $\lambda(X^{[p]})=\lambda(X)^p$ for $X\in \ggg_\bz$.

(2.ii)  Suppose that $(W_i,\rho_i)$ ($i=1,2$) are two irreducible modules of $U_\chi(\ggg)$ with ${\rho_1}|_{C(\ggg)_\bz}={\rho_2}|_{C(\ggg)_\bz}$. Consider $\calh:=\Hom_\bk(W_1,W_2)$. Then $\calh=\calh_\bz+\calh_\bo$ becomes a $\ggg$-module by defining via homogenous elements $X\in \ggg_{|X|}$, and $\nu\in\calh_{|\nu|}$ where $|X|, |\nu|\in\bbz_2$ denote the parities of $X$ and $\nu$, respectively:
\begin{align}\label{eq: hom mod}
(X.\nu)(v)=\rho_2(X)\nu(v)-(-1)^{|X||\nu|}\nu(\rho_1(X)v).
\end{align}
Furthermore, it is readily shown that  $\calh$ is a
restricted $\ggg$-module, on which the representation is denoted by $\vartheta$. By definition, $\vartheta(\ggg)$ admits a $p$-mapping satisfying $\vartheta(C(\ggg)_\bz)^{[p]}=0$ because ${\rho_1}|_{C(\ggg)_\bz}={\rho_2}|_{C(\ggg)_\bz}$
  and $C(\ggg)_\bz\subset C(\ggg_\bz)$. Hence there is an irreducible submodule of $U_0(\ggg)$ in $\calh$, which  must be one-dimensional by (2.i), admitting parity $0$. We take such one, for example, $\bbs:=\bk\nu$. Then $\ggg^{(1)}$  acts trivially on $\nu$, and $\ggg$ acts on $\nu$ by a scalar $\lambda\in \ggg_\bz^*$ with $\lambda(\ggg^{(1)})=0$. Also  $\vartheta(X)\nu=\lambda(X)\nu$ for all $X\in \ggg$, consequently  $\lambda(X^{[p]})=\lambda(X)^p$ for $X\in\ggg_\bz$.  By (\ref{eq: hom mod}), $\rho_2(X)(\vartheta(w))-\vartheta(\rho_1(X)w)=\lambda(X)\vartheta(w)$ for  $X\in \ggg$  and $w\in W_1$. So the assignment $w\mapsto  \vartheta(w)\otimes 1$ for $w\in W_1$ defines a non-trivial $\ggg$-module homomorphism:   $W_1\rightarrow W_2\otimes_\bk \bk_{-\lambda}$, which is even. By Schur's Lemma, this homomorphism must be an isomorphism. Hence $\dim W_1=\dim W_2$.

 (2.iii) Note that $C(\ggg)_\bz\subset C(\ggg_\bz)$. So
 $\ggg_\bz$ possesses a $p$-mapping $[p]'$ with $C(\ggg)_\bz^{[p]'}=0$ (see \cite[Ch.2]{SF}), and for any $y\in\ggg_\bz$, $\xi(y):=y^{[p]}-y^{[p]'}$ belongs to $C(\ggg)_\bz$. So for any given irreducible representation $(\rho,W)$ of $\ggg$ on the space $W$, by Schur's Lemma $\xi(y)$ acts on $W$ by  scalar $c_W(y)^p$ where $c_W$ is a linear function on $C(\ggg)_\bz$. Then we have for any $y\in C(\ggg)_\bz$
 \begin{align}\label{eq: normal}
 \rho(y)^p= (\chi+ c_W)(y)^p\id_V.
 \end{align}
Note that $c_W$ can extend to a function on $\ggg_\bz$  which gives rise to a one-dimensional $U_{c_W}(\ggg)$-module, with respect to $(\ggg,[p]')$.
So for any two irreducible $U_\chi(\ggg)$-modules $W_1$ and $W_2$,  we have two new irreducible representations $\overline \rho_i$ on $\overline W_i:=W_i\otimes_\bk \bk_{-c_{W_i}}$ $(i=1,2)$, respectively. Then both of them  become $U_\chi(\ggg)$-modules with respect to $(\ggg,[p]')$, and  $\overline\rho_i|_{C(\ggg)_\bz}$ ($i=1,2$) coincide.  By (2.ii), $\dim \overline W_i$ ($i=1,2$) have the same dimension. Hence    both $W_1$  and $W_2$ have the same dimension.

 The proof is completed.
\end{proof}

\subsection{} Summing up Propositions \ref{prop: 5.3} and  \ref{prop: 5.4} we have
\begin{theorem}\label{thm: 5.5} Let $\ggg=\ggg_\bz+\ggg_\bo$ be a finite-dimensional {completely} solvable restricted Lie superalgebra over $\bk$. Then for any given $\chi\in \ggg^*_\bz$,  all irreducible $\ggg$-modules associated with $\chi$ have dimension
 $$p^{\frac{b^\chi_0}{2}}2^{{\lfloor\frac{b^\chi_1}{2}\rfloor}}.$$
\end{theorem}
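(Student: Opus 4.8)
The plan is to read Theorem \ref{thm: 5.5} as a direct synthesis of the two preceding propositions, the only genuine content being the reconciliation of the bookkeeping among abstract $\ggg$-modules, $U_\chi(\ggg)$-modules and $p$-characters. Since $\ggg$ is a restricted Lie superalgebra, Schur's Lemma forces each $x^p-x^{[p]}$ ($x\in\ggg_\bz$) to act on an irreducible module $V$ by a scalar $\chi(x)^p$, so every irreducible $\ggg$-module is an irreducible $U_\chi(\ggg)$-module for a uniquely determined $\chi\in\ggg_\bz^*$; thus ``the irreducible $\ggg$-modules associated with $\chi$'' is precisely the set of irreducible $U_\chi(\ggg)$-modules, and the assertion is about the common dimension of the objects in that set.

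First I would invoke Proposition \ref{prop: 5.4}: for the given $\chi$ there exists an irreducible $\ggg$-module $V_0$ with $\dim V_0 = p^{\frac{b_0^\chi}{2}}2^{\lfloor\frac{b_1^\chi}{2}\rfloor}$. I would then point out that the module produced there is genuinely a $U_\chi(\ggg)$-module: it is built, via the inductive argument over a chain of codimension-one subalgebras, out of a degraded subalgebra $\hhh\in D(\ggg,\chi)$ (nonempty by Corollary \ref{coro: 5.2}) and a one-dimensional $U_\chi(\hhh)$-module, the latter existing because $\bk$ is algebraically closed and $\chi(\hhh^{(1)})=0$. Hence the $p$-character of $V_0$ restricted to $\ggg_\bz$ is exactly $\chi$, so $V_0$ is one of the irreducible modules associated with $\chi$ and it realizes the claimed dimension.

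Next I would apply Proposition \ref{prop: 5.3}(2): all irreducible $U_\chi(\ggg)$-modules have the same dimension. Combining this with the previous step, every irreducible $\ggg$-module associated with $\chi$ has dimension equal to $\dim V_0 = p^{\frac{b_0^\chi}{2}}2^{\lfloor\frac{b_1^\chi}{2}\rfloor}$, which is the desired statement.

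The step I expect to be the real obstacle — were the two propositions not already established — is Proposition \ref{prop: 5.3}(2), the uniformity of dimensions across all $U_\chi(\ggg)$-modules: its proof needs the $\Hom_\bk(W_1,W_2)$-module trick, a twist by one-dimensional characters attached to the even center $C(\ggg)_\bz$, and a reduction to the case $C(\ggg)_\bz^{[p]}=0$, where complete solvability forces irreducible restricted modules to be one-dimensional via Lemma \ref{lem: moving}. Proposition \ref{prop: 5.4} (the existence of the maximal-dimensional module) is the other substantive ingredient, resting on the nonemptiness of $D(\ggg,\chi)$ and the codimension-one induction of Lemma \ref{lem: 5.1}. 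Granting both, the theorem itself is essentially a one-line synthesis, and the only care required is the consistency check on $p$-characters described above.
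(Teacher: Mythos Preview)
Your proposal is correct and matches the paper's approach exactly: the paper's proof of Theorem \ref{thm: 5.5} is literally the single line ``Summing up Propositions \ref{prop: 5.3} and \ref{prop: 5.4}'', i.e., existence of one irreducible of the stated dimension (Proposition \ref{prop: 5.4}) plus uniformity of dimensions over all $U_\chi(\ggg)$-modules (Proposition \ref{prop: 5.3}(2)). Your additional remark that the module produced in Proposition \ref{prop: 5.4} really has $p$-character $\chi$ is a sensible consistency check that the paper leaves implicit.
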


The above theorem is an extension of  Kac-Weisfeiler's result on completely solvable restricted Lie algebras (see \cite[Theorem 1]{WK}). { As a corollary to Theorem \ref{thm: 5.5}, we have}
\begin{corollary} Conjecture \ref{conj} holds for completely solvable restricted Lie superalgebras.
\end{corollary}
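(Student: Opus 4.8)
The plan is to deduce the corollary immediately from Theorem~\ref{thm: 5.5} together with the definition of $\scrm(\ggg)$. Let $\ggg=\ggg_\bz+\ggg_\bo$ be a completely solvable restricted Lie superalgebra. Theorem~\ref{thm: 5.5} tells us that for each fixed $\chi\in\ggg_\bz^*$, every irreducible $\ggg$-module \emph{associated with $\chi$} (i.e. every irreducible $U_\chi(\ggg)$-module) has dimension exactly $p^{\frac{b^\chi_0}{2}}2^{\lfloor\frac{b^\chi_1}{2}\rfloor}$. Since every irreducible $\ggg$-module has a well-defined $p$-character $\chi$ (by Schur's Lemma, as recalled in \S\ref{reduced}), the set of dimensions of all irreducible $\ggg$-modules is precisely $\{\,p^{\frac{b^\chi_0}{2}}2^{\lfloor\frac{b^\chi_1}{2}\rfloor}\mid \chi\in\ggg_\bz^*\,\}$. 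Taking the maximum over $\chi$ gives that the maximal dimension of an irreducible $\ggg$-module equals $\max_{\chi\in\ggg_\bz^*}p^{\frac{b^\chi_0}{2}}2^{\lfloor\frac{b^\chi_1}{2}\rfloor}$, which is exactly $\scrm(\ggg)$ by the definition in \S2. This is the assertion of Conjecture~\ref{conj}.

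Concretely, I would write: ``By Theorem~\ref{thm: 5.5}, for each $\chi\in\ggg_\bz^*$ every irreducible $U_\chi(\ggg)$-module has dimension $p^{\frac{b^\chi_0}{2}}2^{\lfloor\frac{b^\chi_1}{2}\rfloor}$. Every irreducible $\ggg$-module is an irreducible $U_\chi(\ggg)$-module for a unique $\chi\in\ggg_\bz^*$ (its $p$-character). Hence the maximal dimension among all irreducible $\ggg$-modules is $\max_{\chi\in\ggg_\bz^*}p^{\frac{b^\chi_0}{2}}2^{\lfloor\frac{b^\chi_1}{2}\rfloor}=\scrm(\ggg)$, so Conjecture~\ref{conj} holds.'' One small point worth a sentence: to even speak of ``irreducible $\ggg$-modules'' we should pass, if necessary, to the minimal $p$-envelope as in \S1.3 and Theorem~\ref{lem: general solv}; but since $\ggg$ is already assumed restricted here, each irreducible module factors through some $U_\chi(\ggg)$ and no such passage is needed.

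The proof is essentially a one-line bookkeeping argument, so there is no real obstacle; the only thing to be careful about is matching notation—ensuring that the quantity $p^{\frac{b^\chi_0}{2}}2^{\lfloor\frac{b^\chi_1}{2}\rfloor}$ appearing in Theorem~\ref{thm: 5.5} is literally the summand whose maximum defines $\scrm(\ggg)$, and that $b^\chi_0,b^\chi_1$ are the same invariants introduced in \S2. Since both are defined via $b^\chi_0=\dim\ggg_\bz-\dim\zzz^\chi_\bz$ and $b^\chi_1=\dim\ggg_\bo-\dim\zzz^\chi_\bo$, this identification is immediate, and the corollary follows.
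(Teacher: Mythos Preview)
Your proposal is correct and matches the paper's approach exactly: the paper simply states the corollary as an immediate consequence of Theorem~\ref{thm: 5.5} with no additional argument, and your one-line deduction---taking the maximum over $\chi$ of the common dimension $p^{\frac{b^\chi_0}{2}}2^{\lfloor\frac{b^\chi_1}{2}\rfloor}$ to obtain $\scrm(\ggg)$---is precisely the intended reasoning.
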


    {
\section{Appendix: Minimal $p$-envelopes for a finite-dimensional Lie superalgebra over $\bk$}\label{sec: appendix}
Let $\ggg=\ggg_\bz\oplus \ggg_\bo$ be a finite-dimensional Lie superalgebra over $\bk$ of characteristic $p>2$. In this appendix section, we introduce the properties of  $p$-envelopes of $\ggg$. For more details on restricted Lie algebras, the reader may refer to \cite[\S2.5]{SF}.

 \subsection{Definition of $p$-envelopes}
 {
  \begin{defn} Keep the notations and assumptions as above.  A restricted Lie superalgebra $(\calg,[p])$  is said to be a $p$-envelope of $\ggg$ if there exists a homomorphism of Lie superalgebras $\bi:\ggg\rightarrow \calg$ such that $\bi$ is injective and the restricted Lie sub-superalgebra $\bi(\ggg)_p$ of $\calg$ generated by $\ggg$ coincides with $\calg$.
  \end{defn}


   In the following we make an example of $p$-envelopes which is taken from the universal enveloping algebra $U(\ggg)$. We first recall that an associative superalgebra $\mathfrak{A}=\mathfrak{A}_\bz\oplus \mathfrak{A}_\bo$ can be endowed with structure of a restricted Lie superalgebra, which is denoted by $\mathfrak{A}^-$, where  the underline space of $\mathfrak{A}^-$  is   $\mathfrak{A}$ itself, and the Lie bracket is defined via $[u_1,u_2]:=u_1u_2-(-1)^{|u_1||u_2|}u_2u_1$ for $\bbz_2$-homogeneous elements  $u_i\in \mathfrak{A}_{|u_i|}$, $|u_i|\in \bbz_2$, $i=1,2$. And the $p$-mapping of $\mathfrak{A}_\bz$ is just the usual $p$th power in $\mathfrak{A}_\bz$.

  \begin{example}

  %
  Let $\bi:\ggg\rightarrow U(\ggg)$ be the canonical imbedding of $\ggg$ into $U(\ggg)$. Recall that in $U(\ggg_\bz)^-\subset U(\ggg)^-$, the Lie subalgebra $\bi(\ggg_\bz)$ generates a restricted Lie subalgebra $\bi(\ggg_\bz)_p=\bigoplus_{i=0}^\infty \bi(\ggg)^{p^i}$ of $U(\ggg_\bz)^-$, where $\bi(\ggg)^{p^i}=\{\bi(g)^{p^i}\mid g\in\ggg_\bz\}$ (see \cite{Zass} or \cite[\S5.2]{SF}).  Take
  $$\calg=\bi(\ggg_\bz)_p\oplus \bi(\ggg_\bo)\subset U(\ggg)^-.$$
Then $\calg$ becomes a $p$-envelope of $\ggg$.
\end{example}

 Such a $p$-envelope as above is usually called the universal $p$-envelope of $\ggg$, which we denote by $\widehat\ggg$.
%
  By straightforward calculations, it is not hard to see that $\bi(\ggg)$ is an ideal of $\widehat\ggg$. Consider the superalgebra $\text{SDer}(\ggg)$ of super deriviations on $\ggg$, and  the homomorphism $\ad: \widehat\ggg\rightarrow \text{SDer}(\ggg)$ defined via sending  $x\mapsto \ad x|_{\bi(\ggg)}$. Then  $\ker(\ad)$ coincides with the center $C(\widehat\ggg)$ of $\widehat\ggg$.

}

 \subsection{} By the same arguments as in the Lie algebras case (see \cite[\S2.5]{SF}, we have the following basic results.
 \begin{lemma}\label{lem: app} The following statements hold.
 \begin{itemize}
 \item[(1)] There is a finite-dimensional $p$-envelope $\calg$ of $\ggg$ such that $\calg=(\ggg_\bz)_p\oplus \ggg_\bo$ where $(\ggg_\bz)_p$ is a $p$-envelope of $\ggg_\bz$.
     \item[(2)] There is a minimal finite-dimensional $p$-envelope $\calg$ of $\ggg$ satisfying (1).
 \item[(3)] Any two minimal dimensional $p$-envelopes of $\ggg$ are isomorphic, as Lie superalgebras.
     \end{itemize}
 \end{lemma}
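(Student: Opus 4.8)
The plan is to reduce the whole statement to the classical (purely even) case treated in \cite[\S2.5]{SF}, the point being that the odd part carries no extra information. Throughout, identify $\ggg$ with its image $\bi(\ggg)$ inside the universal $p$-envelope $\widehat\ggg=\bi(\ggg_\bz)_p\oplus\bi(\ggg_\bo)$, and recall from the preceding discussion that $\bi(\ggg)$ is an ideal of $\widehat\ggg$ with abelian quotient, that the odd part $\widehat\ggg_\bo=\bi(\ggg_\bo)=\ggg_\bo$ is already finite-dimensional, and that the kernel of $\ad\colon\widehat\ggg\to\text{SDer}(\ggg)$ is exactly the center $C(\widehat\ggg)$. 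The sole source of infinite-dimensionality is the even part $\widehat\ggg_\bz=\bi(\ggg_\bz)_p$, which is nothing but the usual universal $p$-envelope of the ordinary Lie algebra $\ggg_\bz$; so everything should come from \cite[\S2.5]{SF} applied to $\ggg_\bz$, with the odd part tagging along unchanged.

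For $(1)$ I would run the classical construction of a finite-dimensional $p$-envelope on $\ggg_\bz$: since $\ad$ maps $\widehat\ggg_\bz$ into the finite-dimensional restricted Lie algebra $\text{SDer}(\ggg)_\bz$ with kernel $C(\widehat\ggg)_\bz$, this center has finite codimension in $\widehat\ggg_\bz$, and by \cite[\S2.5]{SF} it contains a restricted ideal $K$ of $\widehat\ggg_\bz$ of finite codimension with $K\cap\bi(\ggg_\bz)=0$. As $K\subseteq C(\widehat\ggg)_\bz\subseteq C(\widehat\ggg)$, it is a central restricted ideal of $\widehat\ggg$ meeting $\bi(\ggg)$ trivially; put $\calg:=\widehat\ggg/K$ and $(\ggg_\bz)_p:=\widehat\ggg_\bz/K$. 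Then $\calg$ is finite-dimensional, $\ggg$ embeds in it and generates it as a restricted algebra, $\calg=(\ggg_\bz)_p\oplus\ggg_\bo$ with $(\ggg_\bz)_p$ a $p$-envelope of $\ggg_\bz$, and $[(\ggg_\bz)_p,\ggg_\bo]\subseteq\ggg_\bo$ because $\bi(\ggg)$ is an ideal of $\widehat\ggg$ and $\ad$ preserves parity; this proves $(1)$. Then $(2)$ is immediate: by $(1)$ the dimensions of finite-dimensional $p$-envelopes of the stated shape form a nonempty set of positive integers, hence attain their minimum, and a $p$-envelope attaining it is the required minimal one.

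For $(3)$, first note that any $p$-envelope $\calg$ as in $(1)$ is a quotient $\widehat\ggg/K$ of the universal $p$-envelope by a central restricted ideal $K$ with $K\cap\bi(\ggg)=0$ (the comparison map exists by the universal property of $\widehat\ggg_\bz$ among $p$-envelopes of $\ggg_\bz$; its kernel is central since $\bi(\ggg)$ generates $\widehat\ggg$, and purely even since $\widehat\ggg_\bo=\ggg_\bo$). Since the image of $\ad\colon\calg\to\text{SDer}(\ggg)$ equals the restricted subalgebra generated by $\ad(\ggg)$ and is therefore the same for every $p$-envelope, one has $\dim\calg=\dim(\text{that image})+\dim C(\calg)$ and $C(\calg)\cap\bi(\ggg)=C(\ggg)$, so $\calg$ is minimal exactly when $C(\calg)=C(\ggg)$, i.e. when $K$ is a maximal central restricted ideal of $\widehat\ggg_\bz$ meeting $\bi(\ggg_\bz)$ trivially. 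Given two such $K$ and $K'$, the classical argument of \cite[\S2.5]{SF} — exploiting the torus/$p$-nilpotent decomposition of the center of the universal $p$-envelope and the resulting transitivity of restricted automorphisms on complements — yields a restricted automorphism of $\widehat\ggg_\bz$ fixing $\bi(\ggg_\bz)$ pointwise and sending $K$ onto $K'$; extending it by the identity on $\bi(\ggg_\bo)$ gives a restricted automorphism of $\widehat\ggg$ carrying $K$ to $K'$, which descends to the desired isomorphism $\calg\cong\calg'$ of Lie superalgebras. The main obstacle is precisely this comparison of the maximal central ideals in $(3)$, together with their existence in $(1)$ — the genuinely technical input about centers of universal $p$-envelopes — but since $K,K'\subseteq\widehat\ggg_\bz$ these are verbatim the assertions of \cite[\S2.5]{SF} for the ordinary Lie algebra $\ggg_\bz$, so no new phenomenon arises in the super setting.
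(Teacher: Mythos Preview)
Your strategy is the same as the paper's: quotient the universal $p$-envelope by a suitable central even ideal and defer to \cite[\S2.5]{SF}. Parts (1) and (2) are fine; in fact, since the odd part of \emph{any} $p$-envelope is automatically $\bi(\ggg_\bo)$ (the $[p]$-closure produces only even elements, and bracketing these back with $\ggg$ stays in $\ggg$ because $\ggg$ is an ideal), every $p$-envelope has the shape in (1), so your minimum in (2) really is minimal among all $p$-envelopes. You should say this explicitly, because your argument for (3) also tacitly uses it.

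There is, however, a genuine gap in your reduction for (3). You invoke \cite[\S2.5]{SF} for the \emph{ordinary} Lie algebra $\ggg_\bz$ to produce an automorphism of $\widehat\ggg_\bz$ fixing $\bi(\ggg_\bz)$ and carrying $K$ to $K'$, and then extend by the identity on $\ggg_\bo$. But the objects to which \cite{SF} applies are $p$-closed complements of $C(\ggg_\bz)$ inside $C(\widehat{\ggg_\bz})$, whereas your $K,K'$ are complements of $C(\ggg)_\bz$ inside $C(\widehat\ggg)_\bz$. In general $C(\widehat\ggg)_\bz\subsetneq C(\widehat{\ggg_\bz})$ and $C(\ggg)_\bz\subsetneq C(\ggg_\bz)$ (an even element can centralize $\widehat\ggg_\bz$ without centralizing $\ggg_\bo$), so \cite{SF} as stated does not furnish the automorphism you need. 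Moreover, even if you had some $\phi\in\Aut(\widehat\ggg_\bz)$ fixing $\bi(\ggg_\bz)$, extending by the identity on $\ggg_\bo$ is a Lie superalgebra map only if $\phi(x)-x$ centralizes $\ggg_\bo$ for every $x\in\widehat\ggg_\bz$; fixing $\bi(\ggg_\bz)$ alone gives $\phi(x)-x\in C(\widehat{\ggg_\bz})$, not $\phi(x)-x\in C(\widehat\ggg)_\bz$, which is what you need. The fix is exactly what the paper means by ``analogue of the arguments'': rerun the \cite{SF} proof \emph{inside} $\widehat\ggg$ rather than inside $\widehat{\ggg_\bz}$. Since $K,K'\subset C(\widehat\ggg)_\bz$ and all the structural ingredients of the \cite{SF} argument (the $[p]$-map, the torus/$p$-nilpotent decomposition of a central restricted subalgebra, complements) live in the even part, the argument transfers verbatim and directly yields a restricted automorphism of $\widehat\ggg$ sending $K$ to $K'$ --- no separate extension step is required.
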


\begin{proof} For the part (1), we choose a sub-superspace $V$ in the center $C(\widehat\ggg)$ of $\widehat\ggg$ such that $C(\widehat\ggg)=V\oplus (C(\widehat\ggg)\cap \phi(\ggg))$. This $V$ naturally becomes  an ideal of $\hat\ggg$.  Consider $\widetilde\ggg:=\widehat\ggg\slash V$. It is easily seen that $\widetilde\ggg$ is endowed with structure of restricted Lie superalgebras arising from the one of $\widehat\ggg$.
Furthermore, $\dim \widetilde\ggg=\dim \widehat\ggg\slash C(\widehat\ggg)+\dim C(\widehat\ggg\cap \phi(\ggg)$. Note that $\phi(\ggg)$ is an ideal of $\widehat\ggg$, and the homomorphism $\ad: \widehat\ggg\rightarrow \text{SDer}(\ggg)$ defined via sending  $X\mapsto \ad X|_{\phi(\ggg)}$,  admits $\ker(\ad)=C(\widehat\ggg)$. Hence $\dim\widehat\ggg\slash C(\widehat\ggg)\leq \dim \text{SDer}(\ggg)<\infty$. Hence $\dim\widetilde\ggg<\infty$.

By the choice of $V$, there is an embedding $\psi$ of $\ggg$ into $\widetilde\ggg$, i.e. $\psi=\textsf{p}\circ\phi$ for the natural surjective homomorphism $\textsf{p}:\widehat\ggg\rightarrow \widetilde\ggg$. Consequently, it is readily known that $\widetilde\ggg$ is a $p$-envelope of $\ggg$. Such $\widetilde\ggg$ satisfies the requirement that $\widetilde\ggg_\bz=\psi(\ggg_\bz)_p$ is a $p$-envelope of $\ggg_\bz$ and $\widetilde\ggg=\psi(\ggg_\bz)_p\oplus \psi(\ggg_\bo)$.

As to (2), note that in the above arguments, the center of $\widetilde\ggg$ lies in $\psi(\ggg)$. By an analogue of the arguments  in the proof of \cite[Theorem 2.5.8]{SF}, it can be proved that $\widetilde\ggg$ is a minimal finite-dimensional $p$-envelope of $\ggg$ satisfying (1).

 The proof for (3) is also an analogue of that of \cite[Theorem 2.5.8]{SF}. We omit the details.
\end{proof}

\subsection*{Acknowledgement} The author expresses his sincere thanks to the anonymous referee for his/her helpful comments and suggestions, and to Dr. Priyanshu  Chakraborty  for assistance in English expression.

\end{document}